 \newcommand{\R}{\mathbb{R}}
 \newcommand{\N}{\mathbb{N}}
 \newcommand{\Z}{\mathbb{Z}}
 \newcommand{\f}{\frac}
    \newcommand{\E}{\mathbb{E}}
 \newcommand{\PP}{\mathbb{P}}
 \def \bj{\bar{j}}
\def \bk{\bar{k}}
\newtheorem{theorem}{Theorem}[section]
\newtheorem{definition}[theorem]{Definition}
\newtheorem{lemma}[theorem]{Lemma}
\newtheorem{proposition}[theorem]{Proposition}
\newtheorem{remark}[theorem]{Remark}
\title{Regularity of Weighted Tensorized Fractional Brownian Fields and associated function spaces}
\author{C\'eline Esser$^{a}$, Laurent Loosveldt$^{a}$$^{*}$, B\'eatrice Vedel$^{b}$ \\
        \small $^{a}$Univerist\'e de Li\`ege \\
        \small $^{b}$Univerist\'e de Bretagne-Sud \\\\
        \small $^{*}$Corresponding author \tt{l.loosveldt@uliege.be} \\
}
\date{\today}
\begin{document}
\maketitle
\begin{abstract} 
\noindent 
We investigate a new class of self-similar fractional Brownian fields, called Weighted Tensorized Fractional Brownian Fields (WTFBS). These fields, introduced in the companion paper \cite{ELLV}, generalize the well-known fractional Brownian sheet (FBs) by relaxing its tensor-product structure, resulting in new self-similar Gaussian fields with stationary rectangular increments that differ from the FBs. We analyze the local regularity properties of these fields and introduce a new concept of regularity through the definition of Weighted Tensorized Besov Spaces. These spaces combine aspects of mixed dominating smoothness spaces and hyperbolic Besov spaces, which are similar in structure to classical Besov spaces. We provide a detailed characterization of these spaces using Littlewood-Paley theory and hyperbolic wavelet analysis.
\end{abstract}

\noindent{\bf{keywords :}} Brownian fields, Brownian sheet, rectangular increments, hyperbolic wavelets, Besov spaces\\

\section{Introduction}

Modeling some phenomena, such as the movement of particules observed by Brown in 1827, with the help of random functions, has a long history. 
The first and most well-known model is the Brownian motion, which has been extensively studied. Notably, as early as 1937, Paul Levy established the H\"older regularity properties of its sample paths. He achieved this for by expanding the Brownian motion in  the Schauder-Faber system \cite{Levy37}, whose bases functions are the primitive of the Haar wavelets \cite{Fab09,Haar10} and hence enter the class of biorthogonal vaguelet bases. 

Using also the expansion of the Brownian motion in the Faber-Schauder system, Ciesielski proved that almost surely the sample paths belong to the Besov spaces $B^{1/2}_{p,\infty}$ for $1 \le p<+\infty$, see \cite{Cie92}. 
 A modern and self-contained version of these results is presented in \cite{Kempka}, where regularity is explored within the framework of Orlicz-Besov spaces, as well as for newer, smaller spaces.

The Brownian motion $B$ can be defined as the unique Gaussian process with stationary increments which satisfy the self-similarity property 
$$
\forall a>0, \quad
B_{at} \stackrel{(d)}{=} a^{1/2}B_t.
$$
This property has been naturally extended by Kolmogorov \cite{MR0003441} to define fractional brownian motions $B^H$, which have been systematically studied by Mandelbrot and Van Ness \cite{MN68}. They form a family of Gaussian processes with again stationary increments, depending of a parameter $H \in (0,1)$ called Hurst exponent which the generalized notion of self-similarity
$$
    \forall a>0, \quad B^H_{at} \stackrel{(d)}{=} a^{H}B_t.
$$
In \cite{MST99}, wavelet-type expansions of the fractional Brownian motions are given. The main difficulty when $H \neq 1/2$ is that we have either to deal with fractional primitive of wavelets which are no more compactly supported and might create infrared divergence or to analyze or expand the field in a wavelet basis, but with correlated coefficients. Nevertheless, as for the Brownian motion, the properties of regularity of the sample paths are now well understood, in particular, a.s. the sample paths of $B^H$ belong to the H\"older space ${\mathcal{C}}^{H-\varepsilon}$ for every $\varepsilon>0$ (on any compact set), but does not belong to ${\mathcal{C}}^{H}$. Using precise estimates of these expansions, the recent work in \cite{EL22} has highlighted the existence of both rapid and slow points in fractional Brownian motions.

In higher dimensions, particularly in two dimensions, stochastic fields are widely used to model textures in various application contexts such as medical image analysis, texture synthesis, and more. The generalization of Brownian processes has evolved in different directions, depending on the required properties of the process to match the characteristics of the modeled textures.
In particular, two main models have emerged: fractional Brownian fields, which are isotropic, and fractional Brownian sheets, studied notably by A. Kamont \cite{Kam96}, which introduce intrinsic anisotropy through different regularities along distinct directions. This anisotropy proves useful in modeling certain structures, such as medical tissues (e.g., bones) or hydrological phenomena.\\

$\bullet$ {\bf The fractional brownian fields} $Y^H$, $H \in (0,1)$ also called Levy fractional brownian motion \cite{Samorodnitsky}. They are Gaussian, self-similar with stationary increments and isotropic, meaning that the field is invariant in law under rotations. As a centered Gaussian field, it is characterized by its covariance operator 
$$
\E[ Y^H_x Y^H_y] =\frac{1}{2}\left(\Vert x \Vert^{2H}+ \Vert y \Vert^{2H} - \Vert x -y \Vert^{2H}\right).
$$
It is also characterized by its harmonizable representation, that is
\begin{equation}
Y^H_{\bf{x}}= \int_{\mathbb{R}^N} \frac{e^{i\langle \bf{x} , \boldsymbol{\xi}\rangle} -1}{ \|\boldsymbol{\xi} \|^{H+\frac{N}{2}}}  d\hat{\bf{W}}({\boldsymbol{\xi}}) 
\end{equation}
where $\langle \cdot, \cdot \rangle$ denotes the standard scalar product in $\mathbb{R}^N$ and where $\hat{\bf{W}}$ can be understood as the ``Fourier'' transform of the N-dimensional Brownian measure $\bf{W}$ on an underlying probability space $(\Omega,\mathcal{F},\PP)$, see \cite{MR3839281} for a precise definition. 
On any non trivial compact $K$, the sample paths of the field a.s. do not belong to ${\mathcal{C}}^H$ but belong to ${\mathcal{C}}^{H-\varepsilon}$ for any $\varepsilon>0$. Once again, regularity and irregularity results can be otained by performing a classical N-dimensional wavelet analysis of the fields, i.e. with an orthonormal basis of the form $\{\varphi(\cdot - \bold{k}), \,  \bold{k} \in \R^N\} \cup \{\psi(2^j \cdot - \bold{k}), j \ge 0, \bold{k} \in \R^N\}$, where $\varphi$ is a scaling function and $\psi$ is the mother wavelet, with good properties of localization, regularity and oscillations (see \cite{Daubechies:88,Daubechies:92,LM86,Mallat:99} for constructions of such bases and introduction to wavelet theory). In particular, the Hurst exponent of the field can be determined by 
$$
H = \sup \{\alpha>0 :  \, Y^H \in {\mathcal{C}}^{\alpha}([0,1])  \},
$$
and estimated with log-log regression on wavelet coefficients (see \cite{992817}).

These fields have several extensions, introducing anisotropy in the model \cite{BE03} or local fluctuations in the Hurst exponent \cite{MR3839281}, implying that the regularity of the field varies from point to point. An other important extension is the notion of Operator Scaling Gaussian Fields (OSGF) introduced  in \cite{BMS07,BMBS06}. They satisfy a matricial self-similarity condition, which is given by
$$
\forall a>0, \quad Z_{a^E {\bf{x}}} \stackrel{(d)}{=} a^H Z_{\bf{x}}
$$
for some $H>0$, where $E$ is a $N \times N$ matrix with eigenvalues having positive real parts, and where
$\displaystyle{
a^E = \exp(E \ln(a)) = \sum_{k \ge 0} \frac{\ln ^k (a) E^k}{k!}.}
$
In this context, the natural notion of regularity is no longer the classical one, and it becomes necessary to consider anisotropic functional spaces \cite{BL09, CV10}. These spaces have been extensively studied in \cite{Tri06} and possess biorthogonal wavelet bases, referred to as anisotropic wavelet bases \cite{Tri04}. This framework provides strategies for numerical estimation of model parameters \cite{BR10, RCVJA13}.\\

$\bullet$ {\bf The fractional brownian sheets (FBs)} $S^{\bold{H}}$ \cite{Kam96,MR1906407}. For a given vector ${\bf{H}}= (H_1,\dots, H_N) \in (0,1)^N $, the fBs of Hurst index ${\bf{H}}$ is a real-valued centered Gaussian random field $S^{\bf{H}}$ with covariance function given by
$$
\mathbb{E}[ S^{\bf{H}}_{\bf{x}} S^{\bf{H}}_{\bf{y}}] \!\!= \!\!\!\prod_{m=1}^N \frac{1}{2} \left( \vert x_m \vert^{2H_m} + \vert y_m \vert^{2H_m} - \vert x_m -y_m \vert^{2H_m}\right).
$$
It can also be characterized by its harmonizable representation
\begin{equation}
S^{\bf{H}}_{\bf{x}} = \int_{\mathbb{R}^N} \prod_{m=1}^N \frac{ e^{i  x_m \xi_m }-1}{\vert \xi_m\vert^{H_m +\frac{1}{2}} } d\hat{\bf{W}}({\boldsymbol{\xi}}).
\end{equation}
Setting $H_m=\frac{1}{2}$ for each $m \in \{1, \dots, N\}$ yields the standard Brownian sheet. 

Classical spaces and even anisotropic ones are not well-suited to study the regularity of these fields. Indeed, due to the tensor-product structure in the covariance -- and similarly in the kernel of the harmonizable representation --  it is natural to study the regularity of the field in the scale of spaces of mixed dominating smoothness \cite{Vybiral} and to characterize it with the help of hyperbolic wavelet \cite{Kam96}.

Note that the fractional Brownian sheet $S^{\bold{H}}$ satisfies the self-similarity property 
$$
\forall a >0, \quad S^{\bold{H}}_a{\bold{x}} \stackrel{(d)}{=}  a^{\bold{H}} S_{\bold{x}}^H
$$
and its rectangular increments -- as defined in Section \ref{sec:kolmogorov} -- are stationary.
Whereas fractional Brownian fields are the unique Gaussian self-similar fields with stationary increments, Makogin and Mishura have exhibited in \cite{MR3339311, MR4026763} self-similar Gaussian fields with stationary rectangular increments which are distinct from the FBs. Here, the notion of self-similarity required for the field $Z$ is
$$
 \forall a_1, a_2>0, \quad Z_{(a_1x_1,a_2x_2)}\stackrel{(d)}{=} a_1^{H_1} a_2^{H_2} Z(x_1,x_2).
$$

In this paper, we study a new class of self-similar fractional Brownian fields, called {\bf{Weighted Tensorized Fractional Brownian Fields}} (WTFBFs). These fields were introduced in a short companion conference paper \cite{ELLV}, where numerical simulations were  provided. Notably, these simulations highlighted how the parameter $\alpha$ contributes to relaxing the tensor-like structure of the field.
Our motivation for this study is multifaceted :
\begin{enumerate}[label=\alph*)]
    \item These fields offer new examples of self-similar fields with stationary rectangular increments, complementing the examples presented in \cite{MR3339311, MR4026763}, when the notion of self-similarity is relaxed to:
    $$
    \forall a>0 , \quad Z_{a\bold{x}} \stackrel{(d)}{=}  a^H Z_{\bold{x}}.
    $$
    These fields have a relatively simple spectral representation (cf. below), and it allows us to obtain both statistical and regularities properties. 
    \item The fractional Brownian Sheet has been introduced to model anisotropic textures such as bones in the diagnosis of osteoporosis \cite{Kam96}. However, due to its  strong tensor-product structure, it is  not always an appropriate model for real-world data. Nonetheless, in certain contexts, particularly in modeling  reticulated textures -- such as textiles, biological structures and urban networks -- a controlled ``tensor-product''-like property can be useful. Our new class of fields offers flexibility, ranging from  strongly tensorized fields to nearly isotropic ones, depending on a parameter $\alpha$. The endpoint $\alpha=0$ corresponds to the classical FBs when $\alpha=1$ yields a field that closely resembles to the FBf, particularly in term of regularity.
    \item The relaxation of the strict tensor-product structure renders both classical and mixed dominating smoothness notions inadequate for studying these fields. Therefore, we introduce a new notion of regularity and define {\bf Weighted Tensorized Besov Spaces}. These spaces are a hybrid between spaces of mixed dominating smoothness (at $\alpha=0$) and hyperbolic Besov spaces (at $\alpha=1$), the latter being quite similar to classical Besov spaces.

    This question of modulating or weighting the tensor-product effect echoes works on PDE's where the physically relevant solutions of a electronic Schr\"odinger equation naturally has this kind of hybrid regularity \cite{Yse1,Yse2}. This hybrid smoothness can be then used to reduce numerical efforts to compute solutions, and it leads to ongoing researches on non-linear approximation on these spaces in \cite{BHW,Har24}, where the spaces are only introduced via conditions on the hyperbolic wavelet coefficients of their functions. 
\end{enumerate}
    As a matter of fact, the introduction of hyperbolic spaces with characterization in terms of hyperbolic wavelets $X^{s}_{p,q}$ ($X=B$ for Besov spaces or $F$ for Triebel-Lizorkhin spaces) in \cite{ACJRV, SUV}  has proven useful. These spaces are equivalent to classical spaces if and only if $p=q=2$, but they remain very close in other cases, differing only by a logarithmic correction. This approach provides a unified tool for analyzing isotropic, anisotropic, and tensor-product-like structures. 
    In this paper, we aim to present various equivalent definitions of these spaces (through finite differences, Littlewood-Paley analysis, and wavelet characterizations), explore properties of embeddings, and showcase the typical Gaussian random processes related to this notion of smoothness. Let us also mention related works where different extensions of the notions of smoothness have been studied such as directional regularities and rectangular pointwise regularity \cite{BABS,BSBB}. 

The paper is organised as follows : After quickly giving the definitions and the first properties of the field below, Section 2 is devoted to a variant of Kolmogorov's continuity Theorem and provide us with the regularity of the fields. Irregularity properties are obtained in Section 3 and Section 4 is devoted to the study of the associated Besov spaces.

\begin{definition}[\bf Weighted Tensorized fractional Brownian fields]
For $\alpha \in [0,1]$ and $H \in (0,1)$, we set
\begin{equation}\label{eq:H+H-}
H_\alpha^+ := (1+\alpha)H \quad \text{ and } \quad H_\alpha^- := (1-\alpha)H    
\end{equation} 
and we define the Gaussian field $\{X^{\alpha,H}_{(x_1,x_2)}\}_{(x_1,x_2) \in \mathbb{R}^2}$ by 
\begin{equation}\label{eqn:defchamp}
X^{\alpha,H}_{(x_1,x_2)} :=  \int_{\mathbb{R}^2}  \frac{(e^{i x_1 \xi_1}-1) (e^{i x_2 \xi_2}-1)}{\phi_{\alpha,H}(\xi_1,\xi_2)}  d\hat{\bf{W}}({\boldsymbol{\xi}}) 
\end{equation}
where the function
\[\phi_{\alpha,H}(\xi_1,\xi_2)=\min(\vert \xi_1 \vert, \vert \xi_2 \vert)^{H_\alpha^- +\frac{1}{2}}\max (\vert \xi_1 \vert, \vert \xi_2 \vert)^{H_\alpha^++\frac{1}{2} }\]
denotes the square root of the inverse of the spectral density of the field.
\end{definition}
In the sequel, we also use the notation
\[ \mathcal{K}^{\alpha,H}_{(x_1,x_2)}(\xi_1,\xi_2) := \frac{(e^{i x_1 \xi_1}-1) (e^{i x_2 \xi_2}-1)}{\phi_{\alpha,H}(\xi_1,\xi_2)} \]
for the kernel in the stochastic integral \eqref{eqn:defchamp}. Note that the field \eqref{eqn:defchamp} is well-defined according to the following Lemma. 

\begin{lemma}\label{lem:biendefini}
The kernel $\mathcal{K}^{\alpha,H}$ is in $L^2(\R^2)$.
\end{lemma}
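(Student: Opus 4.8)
The plan is to compute the squared $L^2$-norm
\[
\Vert \mathcal{K}^{\alpha,H}_{(x_1,x_2)} \Vert_{L^2}^2 = \int_{\R^2} \frac{\vert e^{ix_1\xi_1}-1\vert^2 \, \vert e^{ix_2\xi_2}-1\vert^2}{\phi_{\alpha,H}(\xi_1,\xi_2)^2} \, d\xi_1 \, d\xi_2
\]
and to show it is finite for every fixed $(x_1,x_2)$. The identity $\vert e^{it}-1\vert^2 = 2(1-\cos t)$ yields the two-sided control $\vert e^{it}-1\vert^2 \le \min(4,t^2)$, which I would use on each factor of the numerator: near the origin it decays like $t^2$, and away from it it stays bounded. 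Since $\phi_{\alpha,H}$ is invariant under the exchange $\xi_1 \leftrightarrow \xi_2$, I would split $\R^2$ into $D_1 = \{\vert \xi_1\vert \le \vert\xi_2\vert\}$ and $D_2 = \{\vert\xi_2\vert < \vert\xi_1\vert\}$ and treat only $D_1$, the other piece being identical after exchanging the roles of the two coordinates. On $D_1$ one has $\phi_{\alpha,H}(\xi_1,\xi_2)^2 = \vert\xi_1\vert^{2H_\alpha^- + 1}\vert\xi_2\vert^{2H_\alpha^+ + 1}$, so that the integrand factorises as a function of $\xi_1$ times a function of $\xi_2$, even though the domain of integration does not.

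Within $D_1$ I would distinguish three regimes according to the size of the frequencies: (i) $\vert\xi_1\vert \le \vert\xi_2\vert \le 1$, (ii) $\vert\xi_1\vert \le 1 \le \vert\xi_2\vert$, and (iii) $1 \le \vert\xi_1\vert \le \vert\xi_2\vert$. In regime (iii) I would bound both numerator factors by $4$; integrating first in $\xi_2$ over $\{\vert\xi_2\vert \ge \vert\xi_1\vert\}$ (which converges since $H_\alpha^+>0$) produces a factor of order $\vert\xi_1\vert^{-2H_\alpha^+}$, and the remaining integral in $\xi_1$ has exponent $-2H_\alpha^- - 2H_\alpha^+ - 1 = -4H-1 < -1$, hence converges at infinity because $H>0$. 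Regime (ii) is a genuine product domain: the $\xi_1$-integral of $\vert\xi_1\vert^{1-2H_\alpha^-}$ over $\{\vert\xi_1\vert\le 1\}$ converges since $H_\alpha^-<1$, while the $\xi_2$-integral of $\vert\xi_2\vert^{-2H_\alpha^+-1}$ over $\{\vert\xi_2\vert\ge 1\}$ converges since $H_\alpha^+>0$. Note that these two regimes also absorb the behaviour near the axes, where $\phi_{\alpha,H}$ vanishes.

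The delicate regime is (i), where both frequencies are small, and this is where the $\min/\max$ coupling in $\phi_{\alpha,H}$ is essential: the factorised bound alone would require $\int_{\vert\xi_2\vert\le 1}\vert\xi_2\vert^{1-2H_\alpha^+}\,d\xi_2<\infty$, i.e. $H_\alpha^+ = (1+\alpha)H < 1$, which may fail when $\alpha$ and $H$ are both close to $1$. Instead I would exploit the constraint $\vert\xi_1\vert\le\vert\xi_2\vert$ and integrate in $\xi_1$ first: since $1 - 2H_\alpha^- > -1$, the inner integral $\int_{\vert\xi_1\vert\le\vert\xi_2\vert}\vert\xi_1\vert^{1-2H_\alpha^-}\,d\xi_1$ is of order $\vert\xi_2\vert^{2-2H_\alpha^-}$, leaving $\int_{\vert\xi_2\vert\le 1}\vert\xi_2\vert^{3 - 2H_\alpha^- - 2H_\alpha^+}\,d\xi_2$. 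Using the crucial identity $H_\alpha^- + H_\alpha^+ = 2H$, the exponent equals $3-4H > -1$ for $H<1$, so this integral converges. Collecting the three estimates on $D_1$, adding the symmetric contribution of $D_2$, and treating the factors $\vert x_1\vert^2$ and $\vert x_2\vert^2$ as harmless multiplicative constants, I would conclude that $\mathcal{K}^{\alpha,H}_{(x_1,x_2)} \in L^2(\R^2)$. The only genuine obstacle is regime (i): there the coupling of the two variables through $\min$ and $\max$ is exactly what compensates the possible failure of the single-variable integrability when $H_\alpha^+ \ge 1$.
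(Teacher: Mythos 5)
Your proof is correct, and while it shares the paper's two starting moves --- the symmetry reduction to the half-plane $\{|\xi_1| \le |\xi_2|\}$ and the elementary bound $|e^{it}-1| \le \min(2,|t|)$ --- the execution is genuinely different. The paper dispatches everything with a single pointwise domination: on $\{|\xi_1|\le|\xi_2|\}$ one has $(|\xi_1|/|\xi_2|)^{2\alpha H}\le 1$, hence
\[
\frac{1}{|\xi_1|^{2H_\alpha^-+1}\,|\xi_2|^{2H_\alpha^++1}} \;\le\; \frac{1}{|\xi_1|^{2H+1}\,|\xi_2|^{2H+1}},
\]
so the weighted kernel is dominated by the kernel of the fractional Brownian sheet with Hurst index $(H,H)$, whose square-integrability reduces to a tensorized one-dimensional computation; in effect the paper trades the unbalanced pair $(H_\alpha^-,H_\alpha^+)$ for the balanced pair $(H,H)$ once and for all, and only then distinguishes near-origin from near-infinity behaviour. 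You instead keep the weighted exponents throughout and verify convergence regime by regime, using iterated integration and the identity $H_\alpha^- + H_\alpha^+ = 2H$: your exponent bookkeeping is right ($3-2H_\alpha^--2H_\alpha^+ = 3-4H > -1$ in the small-frequency regime, $-2H_\alpha^--2H_\alpha^+-1 = -4H-1 < -1$ at infinity, and the mixed regime is an honest product domain). Your remark that the naive factorized bound fails in regime (i) whenever $H_\alpha^+ = (1+\alpha)H \ge 1$ correctly isolates why the coupling matters, and it is exactly the mechanism the paper's one-line domination exploits: the ordering $|\xi_1|\le|\xi_2|$ lets the excess $\alpha H$ in one exponent be absorbed by the deficit in the other. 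What your route buys is explicitness --- regime-wise rates and no appeal to the known FBs computation; what the paper's buys is brevity. One terminological quibble: $|e^{it}-1|^2 \le \min(4,t^2)$ is a one-sided upper bound, not a \emph{two-sided} control, though nothing in your argument uses the reverse inequality.
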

\begin{proof}
The kernel being symmetric in $\xi_1$ and $\xi_2$, we can restrict the domain of integration to the half plane $\{|\xi_1| \le | \xi_2| \}$. One writes, on a neighborhood of $(0,0)$,
\begin{eqnarray*}
\left\vert \frac{(e^{ix_1 \xi_1}-1)(e^{i x_2 \xi_2} -1)}{\vert \xi_1 \vert ^{(1-\alpha)H + 1/2}\vert \xi_2 \vert^{(1+\alpha)H+1/2}} \right\vert^2 & \le & \frac{C \vert x_1  \xi_1 \vert^2 \vert x_2 \xi_2 \vert^2 }{\vert \xi_1 \vert^{2(1-\alpha)H+1} \vert \xi_2 \vert^{2(1+\alpha)H +1}} \\
& \le & \frac{C \vert x_1  \xi_1 \vert^2 \vert x_2 \xi_2 \vert^2 }{\vert \xi_1 \vert^{2H+1} \vert \xi_2 \vert^{2H +1} }
\end{eqnarray*}
using $\vert \xi _1 \vert \le \vert \xi_2 \vert$ for this last inequality. This gives the integrability in $(0,0)$. A similar argument gives the integrability when $\Vert \xi \Vert \to + \infty.$
\end{proof}

Let us end this introduction by mentioning the first basic properties of the field $X^{\alpha, H}$, proved in  \cite{ELLV}.

\begin{proposition}
    For all $\alpha \in [0,1]$ and all $H \in (0,1)$, the field  $X^{\alpha, H}$ is real, self-similar with exponent $2H$ and has stationary rectangular increments. 
\end{proposition}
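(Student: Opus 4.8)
The plan is to exploit that $X^{\alpha,H}$ is a centered Gaussian field given by the harmonizable integral \eqref{eqn:defchamp}, so that all three assertions reduce to elementary manipulations of the kernel $\mathcal{K}^{\alpha,H}$. Reality is a structural property that I would read off from a symmetry of the kernel, whereas self-similarity and stationarity of the rectangular increments are statements about finite-dimensional distributions; since the field is centered Gaussian, each of the latter two follows once the relevant covariances agree, and these covariances are computed from $L^2$ inner products of kernels via the isometry property of the Gaussian stochastic integral (legitimate because $\mathcal{K}^{\alpha,H}\in L^2(\R^2)$ by Lemma~\ref{lem:biendefini}).

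For the reality, I would use that $\hat{\mathbf{W}}$ is the Fourier transform of a real white noise, so that $\int f\, d\hat{\mathbf{W}}$ is almost surely real precisely when the integrand is Hermitian, i.e. $\overline{f(\boldsymbol\xi)}=f(-\boldsymbol\xi)$. For $\mathcal{K}^{\alpha,H}_{\mathbf{x}}$ this is immediate: $\phi_{\alpha,H}$ is real-valued and even in each variable, so $\phi_{\alpha,H}(-\boldsymbol\xi)=\phi_{\alpha,H}(\boldsymbol\xi)$, while $\overline{(e^{-ix_1\xi_1}-1)(e^{-ix_2\xi_2}-1)}=(e^{ix_1\xi_1}-1)(e^{ix_2\xi_2}-1)$, which together give $\overline{\mathcal{K}^{\alpha,H}_{\mathbf{x}}(-\boldsymbol\xi)}=\mathcal{K}^{\alpha,H}_{\mathbf{x}}(\boldsymbol\xi)$.

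Self-similarity with exponent $2H$ I would obtain through the change of variables $\boldsymbol\eta=a\boldsymbol\xi$ in the covariance $\E[X^{\alpha,H}_{a\mathbf{x}}X^{\alpha,H}_{a\mathbf{y}}]=\int \mathcal{K}^{\alpha,H}_{a\mathbf{x}}\overline{\mathcal{K}^{\alpha,H}_{a\mathbf{y}}}\,d\boldsymbol\xi$. The phases absorb the dilation, $e^{iax_j\xi_j}=e^{ix_j\eta_j}$, and the key point is that $\phi_{\alpha,H}$ is homogeneous of degree $H_\alpha^-+\tfrac{1}{2}+H_\alpha^++\tfrac{1}{2}=2H+1$; collecting this homogeneity (appearing squared, since $\phi_{\alpha,H}$ enters the covariance to the power $2$) together with the Jacobian $a^{-2}$ produces exactly the factor $a^{2(2H+1)-2}=a^{4H}$, so that $\E[X^{\alpha,H}_{a\mathbf{x}}X^{\alpha,H}_{a\mathbf{y}}]=a^{4H}\,\E[X^{\alpha,H}_{\mathbf{x}}X^{\alpha,H}_{\mathbf{y}}]$, which is the covariance identity characterizing $X^{\alpha,H}_{a\mathbf{x}}\stackrel{(d)}{=}a^{2H}X^{\alpha,H}_{\mathbf{x}}$.

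For the stationarity of the rectangular increments, I would first record that, since the increment over $[x_1,x_1+h_1]\times[x_2,x_2+h_2]$ applies a first-order difference in each coordinate to the kernel, factoring the alternating sum of the four corner values gives the kernel $e^{i\langle\mathbf{x},\boldsymbol\xi\rangle}(e^{ih_1\xi_1}-1)(e^{ih_2\xi_2}-1)/\phi_{\alpha,H}(\boldsymbol\xi)$, where the base point $\mathbf{x}$ survives only through the unimodular phase $e^{i\langle\mathbf{x},\boldsymbol\xi\rangle}$. Consequently the covariance of two such increments with base points $\mathbf{x},\mathbf{x}'$ carries the factor $e^{i\langle\mathbf{x}-\mathbf{x}',\boldsymbol\xi\rangle}$ and otherwise depends only on the increment vectors, hence is invariant under a common translation of the base points; being centered Gaussian, the increment field is therefore stationary. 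I do not expect a genuine obstacle here: the computations are routine once the isometry is in place, and the only place demanding care is the bookkeeping of the homogeneity exponent in the self-similarity step, where the Jacobian and the squared kernel must be combined correctly to land on $4H$ rather than, say, $2H+1$.
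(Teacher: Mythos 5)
Your proof is correct: the Hermitian symmetry $\overline{\mathcal{K}^{\alpha,H}_{\mathbf{x}}(-\boldsymbol{\xi})}=\mathcal{K}^{\alpha,H}_{\mathbf{x}}(\boldsymbol{\xi})$ gives reality, the homogeneity of degree $2H+1$ of $\phi_{\alpha,H}$ combined with the Jacobian $a^{-2}$ does yield the factor $a^{4H}=(a^{2H})^2$ in the covariance, and the observation that the base point enters the increment kernel only through the unimodular phase $e^{i\langle \mathbf{x},\boldsymbol{\xi}\rangle}$ settles stationarity of the rectangular increments for a centered Gaussian field. Note that the paper itself gives no proof of this proposition, deferring to the companion paper \cite{ELLV}; your argument is the standard harmonizable-representation one and matches exactly the computations the present paper performs elsewhere, namely the increment kernel and the isometry property used in the proof of Lemma \ref{lem:irreg3}, so there is nothing to flag.
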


\section{Regularity of the sample paths: A variant of Kolmogorov's continuity Theorem}\label{sec:kolmogorov}

A usual strategy to get a first glimpse on the regularity of a stochastic process from very basic probabilistic quantity is to use the Kolmogorov's continuity Theorem. In its most basic form, it tells that if a  field $\{X_{\mathbf{x}}\}_{\mathbf{x} \in \R^d}$ is such that  there exist some constants $\beta, \eta, c, R>0$ for which, for all $\mathbf{x},\mathbf{y}\in \R^2$ with $\|\mathbf{x}-\mathbf{y}\| < R$, we have
\begin{equation}\label{kolmo:intro}
     \E[| X_{\mathbf{x}}-X_{\mathbf{y}}|^\eta] \leq c \left(\|\mathbf{x}-\mathbf{y}\|\right)^{1+\beta}
\end{equation}
  then there exists a version $\{\widetilde{X}_{\mathbf{x}}\}_{\mathbf{x} \in \R^d}$ of $\{X_{\mathbf{x}}\}_{\mathbf{x} \in \R^d}$ which is locally Hölder of order $\gamma$, for all $\gamma \in [0,\frac{\beta}{\eta})$. It means that, for any such $\gamma$ and any bounded set $K$, for all $\omega \in \Omega$, there exists a finite constant $C(\omega)>0$ such that for all $\mathbf{x},\mathbf{y} \in K$,
  \[ |\widetilde{X}_{\mathbf{x}}-\widetilde{X}_{\mathbf{y}} | \leq C(\omega) \left(\|\mathbf{x}-\mathbf{y}\|\right)^{\gamma}.\]
Numerous generalizations of Kolmogorov's continuity theorem (also known as the Kolmogorov-Chentsov theorem) can be found in the literature. See, for instance, \cite{MR4621071} for an extension in the very general context of  stochastic processes defined on a metric space and taking values in another metric space. These kinds of results are particularly well-adapted for processes with stationary increments, as the law of $X_{\mathbf{x}}-X_{\mathbf{y}}$ used in \eqref{kolmo:intro} (or generally $d(X_{\mathbf{x}},X_{\mathbf{y}})$ for processes on a metric space with distance $d$) does not depend on $\|\mathbf{x}-\mathbf{y}\|$. 

  Nevertheless, as already noted in \cite{MR1906407,MR2274895,MR3339311,MR4026763,MR3498033}, for real-valued stochastic field with a ``tensorized structure'', the stationarity of increments is rarely met and it is often preferable to work with so-called rectangular increments. Given a stochastic field $\{X_{\mathbf{x}}\}_{\mathbf{x} \in \R^d}$ and $\mathbf{h} \in \R^d$, the rectangular increment of $X$ at $\mathbf{x} \in \R^d$ with step $\mathbf{h}$ is given by
\[\Delta  X_{\mathbf{h};\mathbf{x}}:= \sum_{ (k_1,\dots,k_d) \in \{0,1\}^d } (-1)^{d-(k_1+\dots+k_d)} X(x_1 + k_1 h_1,\dots,x_d + k_d h_d).\]
In this paper, we work with $d=2$ and therefore we  have
\[\Delta  X_{(h_1,h_2);(x_1,x_2)} := X(x_1+h_1,x_2+h_2)-X(x_1+h_1,x_2)-X(x_1,x_2+h_2)+X(x_1,x_2).\]
Starting from the observation that rectangular increments are an appropriate quantity to study stochastic field with tensorized structure, the authors in \cite{MR1696137,MR1728004} propose the following variant of Kolmogorov's continuity theorem. Let $\{X_{(x_1,x_2)}\}_{(x_1,x_2) \in \R^2}$ be a two-dimensional fied such that  there exist  constants $\beta_1, \beta_2, \eta, c, R>0$ for which, for all $(x_1,x_2)\in \R^2$ and $(h_1,h_2) \in \R^2$ with $|h_1 | < R$ and $|h_2| < R$, we have
$$
     \E[| \Delta  X_{(h_1,h_2);(x_1,x_2)}|^\eta] \leq c |h_1|^{1+\beta_1} |h_2|^{1+\beta_2}.
$$
Then, there exists a version $\{\widetilde{X}_{(x_1,x_2)}\}_{(x_1,x_2) \in \R^2}$ of  $\{X_{(x_1,x_2)}\}_{(x_1,x_2) \in \R^2}$ for which, for all $\gamma_1 \in [0,\frac{\beta_1}{\eta})$ and $\gamma_2 \in [0,\frac{\beta_2}{\eta})$, and for every bounded intervals $I,J$ of $\mathbb{R}$, for all $\omega \in \Omega$, there exists a finite constant $C(\omega)>0$ such that
    \[|\Delta  \widetilde{X}_{(h_1,h_2);(x_1,x_2)}| \leq C |h_1|^{\gamma_1} |h_2|^{\gamma_2} .\]
for every $x_1 \in I$, $x_2 \in J$ and $h_1, h_2 \in \mathbb{R}$ with $x_1+h_1 \in I$ and $x_2 + h_2 \in J$.  This result could be applied to study the regularity of the WTFBFs. However, the weights within the tensorized structure of this field allow to refine the bounds on the moments, as stated in the following proposition.

\begin{proposition}\cite{ELLV}\label{prop:const_increm}
For all {$\alpha \in [0,1]$ and $H \in (0,1)$}, there is a constant $c_1>0$ such that the rectangular increments of $\{X^{\alpha,H}_{(x_1,x_2)}\}_{(x_1,x_2) \in \mathbb{R}^2}$ satisfy 
$$ \mathbb{E}\big[ |\Delta  X^{\alpha,H}_{(h_1,h_2);(x_1,x_2)}|^2 \big]
  \,\,  \leq   c_1 \left(\max\{|h_1|,|h_2| \}^{1-\alpha}\min\{|h_1|,|h_2| \}^{1+\alpha}\right)^{2H}
$$
for all $(x_1,x_2),(h_1,h_2) \in \mathbb{R}^2$. 
\end{proposition}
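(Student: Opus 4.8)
The plan is to reduce the second moment to the squared $L^2(\R^2)$-norm of the kernel of the rectangular increment, and then to estimate a purely deterministic oscillatory integral. I would first compute the rectangular increment at the level of the kernel. Because $X^{\alpha,H}_{(x_1,x_2)}=\int_{\R^2}\mathcal{K}^{\alpha,H}_{(x_1,x_2)}\,d\hat{\bf{W}}$, linearity of the stochastic integral gives $\Delta X^{\alpha,H}_{(h_1,h_2);(x_1,x_2)}=\int_{\R^2}\Delta\mathcal{K}\,d\hat{\bf{W}}$, and factoring the four-term alternating sum yields
\[\Delta\mathcal{K}(\xi_1,\xi_2)=e^{i(x_1\xi_1+x_2\xi_2)}\,\frac{(e^{ih_1\xi_1}-1)(e^{ih_2\xi_2}-1)}{\phi_{\alpha,H}(\xi_1,\xi_2)}.\]
By the isometry of the Wiener integral, $\E[|\Delta X^{\alpha,H}_{(h_1,h_2);(x_1,x_2)}|^2]=\int_{\R^2}|\Delta\mathcal{K}|^2\,d\xi$; the unimodular factor $e^{i(x_1\xi_1+x_2\xi_2)}$ disappears, which simultaneously re-proves that the rectangular increments are stationary and reduces the statement to bounding
\[I(h_1,h_2):=\int_{\R^2}\frac{|e^{ih_1\xi_1}-1|^2\,|e^{ih_2\xi_2}-1|^2}{\phi_{\alpha,H}(\xi_1,\xi_2)^2}\,d\xi.\]

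Next I would normalize using the symmetries. The integrand is even in $\xi_1$ and in $\xi_2$, and $I$ is invariant under the simultaneous exchange $(\xi_1,h_1)\leftrightarrow(\xi_2,h_2)$, so I may assume $0<|h_1|\le|h_2|$ and integrate over $(0,\infty)^2$. Moreover, since $\phi_{\alpha,H}(\xi/a)=a^{-(2H+1)}\phi_{\alpha,H}(\xi)$ (here $H_\alpha^++H_\alpha^-=2H$), the substitution $\xi\mapsto\xi/a$ gives the exact homogeneity $I(ah_1,ah_2)=a^{4H}I(h_1,h_2)$. This removes one variable: with $\rho=|h_1|/|h_2|\in(0,1]$ the claim becomes the one-parameter estimate $I(\rho,1)\le c_1\,\rho^{(1+\alpha)2H}$, and one checks that both sides of the asserted inequality carry the same homogeneity $a^{4H}$ under $(h_1,h_2)\mapsto(ah_1,ah_2)$, a reassuring consistency check.

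To estimate $I(\rho,1)$ I would split the quadrant along the diagonal $\{\xi_1=\xi_2\}$, on the two halves of which $\phi_{\alpha,H}^2$ equals $\xi_1^{2H_\alpha^-+1}\xi_2^{2H_\alpha^++1}$ and $\xi_2^{2H_\alpha^-+1}\xi_1^{2H_\alpha^++1}$ respectively, and then subdivide each half at the two natural frequency scales $\xi_2\sim1$ and $\xi_1\sim1/\rho$ dictated by the elementary equivalence $|e^{it}-1|^2\asymp\min(1,t^2)$. On each resulting sub-rectangle the integrand is a monomial in $\xi_1,\xi_2$, so every piece reduces to an iterated power integral whose convergence is controlled by $0\le H_\alpha^-\le H_\alpha^+<2$; the plan is then to collect the pieces and read off the leading power of $\rho$.

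The main obstacle, I expect, is the book-keeping imposed by the min/max coupling: the exponent carried by each frequency changes across the diagonal, so the two halves are genuinely different and several sub-regions must be compared against one another. The truly delicate region is the low-frequency one, $|\xi_1|,|\xi_2|\lesssim1$, where both oscillatory factors sit in their quadratic range and the integrand reduces to $\xi_1^2\xi_2^2/\phi_{\alpha,H}^2$; a careless estimate there produces a power $\rho^2$, and the crux is to show this contribution does not exceed the claimed $\rho^{(1+\alpha)2H}$. This is also where the endpoint degeneracies enter and must be treated separately, namely the logarithmic behavior appearing when $H_\alpha^-=0$ (that is $\alpha=1$) and the switch of the dominant endpoint of the $\xi_1$-integral near $H_\alpha^+=1$. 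Once the low-frequency contribution is controlled, the remaining high-frequency pieces are routine and the constant $c_1$ can be chosen uniformly in $(h_1,h_2)$.
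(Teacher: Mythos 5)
Your first two reductions are correct, and they match the only computation of this type that the present paper actually carries out (the Wiener-isometry step is literally the one in the proof of Lemma \ref{lem:irreg3}; note that the paper does not prove Proposition \ref{prop:const_increm} itself but imports it from \cite{ELLV}). Indeed, $\E\big[|\Delta X^{\alpha,H}_{(h_1,h_2);(x_1,x_2)}|^2\big]=I(h_1,h_2)$ exactly, with your nonnegative integrand and no $x$-dependence, and since $H_\alpha^++H_\alpha^-=2H$ one has the exact homogeneity $I(ah_1,ah_2)=a^{4H}I(h_1,h_2)$, so by the symmetry of $\phi_{\alpha,H}$ the proposition is equivalent to $I(\rho,1)\le c_1\rho^{2H(1+\alpha)}$ for $\rho=\min(|h_1|,|h_2|)/\max(|h_1|,|h_2|)\in(0,1]$. (One cosmetic slip: $|e^{it}-1|^2\asymp\min(1,t^2)$ is false as a two-sided pointwise bound, since the left-hand side vanishes at $t\in 2\pi\Z$; for an upper bound you only need $|e^{it}-1|^2\le\min(4,t^2)$, so this is harmless.)

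The genuine gap is the step you explicitly defer, and it cannot be closed in the stated generality, because the low-frequency contribution really is of order $\rho^2$: the integrand is nonnegative, and on $[1,2]^2$ one has $\phi_{\alpha,H}(\xi_1,\xi_2)^2\le 2^{4H+2}$, $|e^{i\xi_2}-1|^2\ge 4\sin^2(1/2)$ and $|e^{i\rho\xi_1}-1|^2=4\sin^2(\rho\xi_1/2)\ge 4\rho^2/\pi^2$ for $\rho\le 1$, whence $I(\rho,1)\ge c\,\rho^2$ with $c>0$ independent of $\rho$. Consequently $I(\rho,1)\le c_1\rho^{2H(1+\alpha)}$ forces $(1+\alpha)H\le 1$, and the borderline cases carry genuine logarithms: when $(1+\alpha)H=1$ (so $2H_\alpha^++1=3$) the strip $\xi_2\in[1,2]$, $2\le\xi_1\le(2\rho)^{-1}$ contributes $\asymp\rho^2\int \xi_1^{-1}\,d\xi_1\asymp\rho^2\log(1/\rho)$, and when $\alpha=1$ (so $2H_\alpha^-+1=1$) the divergence of $\int_1^{1/\rho}\sin^2(\xi_2/2)\,\xi_2^{-1}\,d\xi_2\asymp\log(1/\rho)$ produces an extra logarithm for every $H$ --- an effect the paper itself meets in the separate $\alpha=1$ treatment inside the proof of Theorem \ref{thm:kolmo}. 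Carried out to the end, your decomposition thus yields $I(\rho,1)\asymp\rho^{\min\{2H(1+\alpha),2\}}$ up to these logarithmic factors, which proves the claimed inequality precisely when $\alpha\in[0,1)$ and $(1+\alpha)H<1$, and \emph{contradicts} it otherwise (for instance $\alpha=1$, $H=3/4$: the floor $c\rho^2$ beats $c_1\rho^3$ for small $\rho$). So the obstruction you flagged is not book-keeping: for the statement as reproduced here, valid for all $\alpha\in[0,1]$ and $H\in(0,1)$, no completion of your plan can succeed without restricting the parameters or weakening the exponent, and your closing assertion that the remaining pieces are routine with a uniform constant is exactly where the argument breaks. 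This also explains why the paper's own sharp scaling results (Lemma \ref{lem:irreg1}) are phrased for wavelet coefficients: since $\widehat{\psi}$ vanishes near the origin, the coefficients excise precisely the low-frequency block responsible for the $\rho^2$ floor, whereas the raw rectangular increment does not.
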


Our strategy will thus be to exploit this bound on the second moments of rectangular increments of the WTFBFs to establish the regularity of their sample paths. 

\begin{proposition}\label{prop:regularity}
For all $\alpha \in [0,1]$ and $H \in (0,1)$, there exists a version of $X^{\alpha,H}$, which we still denote as $X^{\alpha,H}$, such that for all $\varepsilon>0$ and for every bounded intervals $I,J$ of $\mathbb{R}$, for all $\omega \in \Omega$, there exists a finite constant $C(\omega)>0$ such that
    \[
        |\Delta  X^{\alpha,H}_{(h_1,h_2);(x_1,x_2)}| \leq C(\omega)\left(\max\{|h_1|,|h_2| \}^{1-\alpha}\min\{|h_1|,|h_2| \}^{1+\alpha}\right)^{H-\varepsilon}.
    \]
for every $x_1 \in I$, $x_2 \in J$ and $h_1, h_2 \in \mathbb{R}$ with $x_1+h_1 \in I$ and $x_2 + h_2 \in J$.
\end{proposition}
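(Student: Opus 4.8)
The plan is to exploit the Gaussianity of the field to promote the second-moment estimate of Proposition \ref{prop:const_increm} to a bound on \emph{all} moments, and then to run a chaining argument organized around the \emph{finer} of the two increment steps rather than appealing directly to the product-type variant of Kolmogorov's theorem recalled above. Since $\Delta X^{\alpha,H}_{(h_1,h_2);(x_1,x_2)}$ is a finite linear combination of the centered Gaussian field (equivalently, a Wiener integral of a deterministic kernel), it is a centered Gaussian variable, so $\E[|\Delta X|^p]=c_p\,(\E[|\Delta X|^2])^{p/2}$; writing $\rho(h_1,h_2):=\max\{|h_1|,|h_2|\}^{1-\alpha}\min\{|h_1|,|h_2|\}^{1+\alpha}$, Proposition \ref{prop:const_increm} then gives $\|\Delta X_{(h_1,h_2);(x_1,x_2)}\|_{L^p}\le C_p\,\rho(h_1,h_2)^H$ for every $p\ge 1$. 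Using stationarity of the rectangular increments I would fix the reference rectangle to be $[0,1]^2$; by the symmetry of the kernel under exchanging the two coordinates, together with a reflection in the signs of $h_1,h_2$, it suffices to prove the bound on the region $0<h_1\le h_2$, where $\rho=h_2^{1-\alpha}h_1^{1+\alpha}$.

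The core of the argument is to avoid the product-form variant of Kolmogorov's theorem: inserting $\rho^{pH}\le h_1^{(1+\alpha)pH}h_2^{(1-\alpha)pH}$ into it yields a modulus of the shape $|h_1|^{\gamma_1}|h_2|^{\gamma_2}$ that is strictly weaker than the claimed $\rho^{H-\varepsilon}$, and moreover the required exponent $(1-\alpha)Hp-1$ becomes negative as $\alpha\to 1$, so its hypotheses fail outright. Instead I fix $(x_1,h_1)$ and consider the one-parameter centered Gaussian process $g(u):=X^{\alpha,H}_{(x_1+h_1,u)}-X^{\alpha,H}_{(x_1,u)}$. Because the field vanishes on the coordinate axes (the factor $e^{iu\xi_2}-1$ vanishes at $u=0$), each $g(u)$ is itself the rectangular increment $\Delta X_{(h_1,u);(x_1,0)}$, and the elementary telescoping identity $\Delta X_{(h_1,h_2);(x_1,x_2)}=g(x_2+h_2)-g(x_2)$ holds, with $g(u+v)-g(u)=\Delta X_{(h_1,v);(x_1,u)}$. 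Hence $\|g(u+v)-g(u)\|_{L^2}\le \sqrt{c_1}\,(v^{1-\alpha}h_1^{1+\alpha})^{H}$ for $v\ge h_1$: in the regime that matters $(h_2\ge h_1)$ the statement reduces to a Hölder-type modulus of continuity, in the single variable $u$, of a Gaussian process whose canonical metric is $\asymp (v^{1-\alpha}h_1^{1+\alpha})^H$ and \emph{saturates} below scale $h_1$.

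I would then run a one-dimensional Kolmogorov/Dudley chaining in $u$ to control $\sup_u|g(u+v)-g(u)|$ by $(v^{1-\alpha}h_1^{1+\alpha})^{H}$ times a logarithmic factor, with a Gaussian tail furnished by the moment bounds (or by Borell--TIS concentration). Discretizing $h_1=2^{-n}$ and $x_1\in 2^{-n}\{0,\dots,2^n\}$, the decisive point is that at the fine scale $n$ there are only $O(2^n)$ positions over which to take a union bound — \emph{not} the $O(2^{n+m})$ forced by a blind enumeration of dyadic rectangles — so choosing the moment order $p$ large relative to $1/\varepsilon$ makes the resulting probabilities summable in $n$, and Borel--Cantelli applies, the logarithmic loss being absorbed in the passage from $H$ to $H-\varepsilon$. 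A final continuity/chaining step fills in non-dyadic values of $(x_1,h_1,u,v)$, and the symmetric region $h_2\le h_1$ is obtained by exchanging the roles of the two coordinates.

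The main obstacle is precisely the coarse (longer) direction when $\alpha$ is close to $1$: there the refined variance bound offers essentially no decay as $h_2$ grows — none at all at $\alpha=1$, where $\rho=\min\{|h_1|,|h_2|\}^{2}$ — so the naive approach of union-bounding over all $O(2^{n+m})$ dyadic rectangles diverges. Recasting the estimate as the modulus of continuity of the partial-increment process $g$ is what resolves this: the saturating canonical metric makes the entropy integral converge with only a logarithmic cost, while the additive structure of rectangular increments guarantees that controlling $g$ controls every rectangular increment carrying the prescribed fine step. I expect the most delicate part to be keeping the $h_1$-dependence of all constants explicit throughout the chaining so that the bound is genuinely uniform in $(x_1,h_1)$.
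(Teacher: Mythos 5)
Your opening move (Gaussianity promotes the second-moment bound of Proposition \ref{prop:const_increm} to all moments) is exactly the paper's, via the moment identity preceding Proposition \ref{regularity}. But the pivot of your proposal --- the claim that ``the naive approach of union-bounding over all $O(2^{n+m})$ dyadic rectangles diverges'' near $\alpha=1$, which is what motivates the entire detour through the one-parameter process $g$ --- is a misdiagnosis. The paper's Theorem \ref{thm:kolmo} performs precisely that union bound and it converges for \emph{every} $\alpha\in[0,1]$: at scales $(j_1,j_2)$ the threshold is $2^{-\gamma(\max\{j_1,j_2\}(1+\alpha)+\min\{j_1,j_2\}(1-\alpha))}$, Markov with the $\eta$-th moment gives a per-rectangle probability $\leq c\,2^{(\gamma\eta-\beta-1)(\max\{j_1,j_2\}(1+\alpha)+\min\{j_1,j_2\}(1-\alpha))}$, and since the weighted exponent dominates the rectangle count, i.e.\ $\max\{j_1,j_2\}(1+\alpha)+\min\{j_1,j_2\}(1-\alpha)\geq j_1+j_2$ and $\gamma\eta-\beta-1<0$, the total is $\leq 4c\,2^{(j_1+j_2)(\gamma\eta-\beta)}$, summable as soon as $\gamma<\beta/\eta$ --- and Gaussianity lets $\eta$ be arbitrarily large, so $\beta/\eta=H-1/\eta$ approaches $H$. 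The absence of decay in the coarse direction at $\alpha=1$ is harmless because the threshold itself only scales with $\max\{j_1,j_2\}$ there, and $2^{j_1+j_2}\leq 2^{2\max\{j_1,j_2\}}$ is absorbed by high moments (the paper does need a separate twist at $\alpha=1$, but it is only a logarithmic factor from the telescoping sums, absorbed by shrinking $\gamma$ --- not a divergent union bound). Your alternative route through $g(u)=X_{(x_1+h_1,u)}-X_{(x_1,u)}$ is plausible in outline, but it rests on solving a problem that does not exist, and its completion (chaining in $u$ coupled with a dyadic telescoping in $(x_1,h_1)$, with constants uniform in $(x_1,h_1)$) would essentially reconstruct the two-dimensional dyadic chaining of the paper in asymmetric form; you flag this uniformity as delicate but do not carry it out.

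The second, independent gap is that you never construct the version, which is what the Proposition actually asserts. The field is defined only as a family of Wiener integrals, with no sample-path regularity a priori, so there is no ``final continuity/chaining step [that] fills in non-dyadic values'' of the \emph{given} field: all your bounds hold a.s.\ on a countable dyadic grid, and one must then \emph{define} $\widetilde{X}$ at non-dyadic points as the limit along dyadic approximations (the Cauchy property coming from the horizontal and vertical increment bounds \eqref{eqn:reguhorizon}--\eqref{eqn:reguverti}, which in turn use the vanishing of the field on the axes --- a normalization you correctly note holds for $X^{\alpha,H}$, but which for a general field requires the reduction $Y_{(x_1,x_2)}=X_{(x_1,x_2)}-X_{(x_1,0)}-X_{(0,x_2)}+X_{(0,0)}$), and finally verify $\PP(\widetilde{X}_{(x_1,x_2)}=X_{(x_1,x_2)})=1$ at each fixed point via convergence in probability from the moment hypothesis. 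This step occupies a substantial part of the paper's proof of Theorem \ref{thm:kolmo} and is entirely missing from your proposal. A minor further inaccuracy: the canonical metric of $g$ does not ``saturate below scale $h_1$'' --- for $v\leq h_1$ it decays \emph{faster}, like $(v^{1+\alpha}h_1^{1-\alpha})^{H}$; the quasi-saturation you exploit occurs \emph{above} scale $h_1$ when $\alpha$ is close to $1$. None of these objections says your architecture is unsalvageable, but as written the proof has a false load-bearing claim and omits the modification construction that the statement requires.
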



This result follows directly from a variant of Kolmogorov’s continuity theorem, which we state now.


\begin{theorem}\label{thm:kolmo}
   Let $\alpha \in [0,1]$ be fixed. Assume that $\{X_{(x_1,x_2)}\}_{(x_1,x_2) \in \R^2}$ is a stochastic field for which there exist some constants $\beta, \eta, c, R>0$ such that 
   \begin{equation}\label{eqn:theoremkolmohyp}  \E[|\Delta  X_{(h_1,h_2);(x_1,x_2)}|^\eta] \leq c \left(\max\{|h_1|,|h_2| \}^{1-\alpha}\min\{|h_1|,|h_2| \}^{1+\alpha}\right)^{1+\beta}
    \end{equation}
    for all $(x_1,x_2)\in \R^2$ and $(h_1,h_2) \in \R^2$ with $|h_1| < R$ and $|h_2|<R$. Then, there exists a version $\{\widetilde{X}_{(x_1,x_2)}\}_{(x_1,x_2) \in \R^2}$ of $\{X_{(x_1,x_2)}\}_{(x_1,x_2) \in \R^2}$ such that, for every $\gamma \in [0,\frac{\beta}{\eta})$ and for every bounded intervals $I,J$ of $\mathbb{R}$, for all $\omega \in \Omega$, there exists a finite constant $C(\omega)>0$ such that
    \begin{equation}\label{eqn:theoremkolmo}
        |\Delta  \widetilde{X}_{(h_1,h_2);(x_1,x_2)}| \leq C(\omega)\left(\max\{|h_1|,|h_2| \}^{1-\alpha}\min\{|h_1|,|h_2| \}^{1+\alpha}\right)^\gamma
    \end{equation}
for every $x_1 \in I$, $x_2 \in J$ and $h_1, h_2 \in \mathbb{R}$ with $x_1+h_1 \in I$ and $x_2 + h_2 \in J$.
\end{theorem}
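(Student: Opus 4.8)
The plan is to follow the classical Kolmogorov--Chentsov scheme adapted to rectangular increments, the two genuinely new features being that only the mixed second increment is controlled (nothing is assumed about single--coordinate increments) and that the natural scale is the anisotropic quantity $\rho(h_1,h_2):=\max\{|h_1|,|h_2|\}^{1-\alpha}\min\{|h_1|,|h_2|\}^{1+\alpha}$, for which the hypothesis reads $\E[|\Delta X_{(h_1,h_2);(x_1,x_2)}|^\eta]\le c\,\rho(h_1,h_2)^{1+\beta}$. Since \eqref{eqn:theoremkolmohyp} holds at every base point and depends only on $(h_1,h_2)$, I would fix the bounded box $I\times J$ once and for all and work directly on it, anchoring at its lower--left corner $(\mu_1,\mu_2)$. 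The first reduction is to replace $X$ by $F(x_1,x_2):=X(x_1,x_2)-X(x_1,\mu_2)-X(\mu_1,x_2)+X(\mu_1,\mu_2)$: because $F-X$ is a sum of functions of a single coordinate, the mixed second difference annihilates those terms, so $\Delta F=\Delta X$ on every rectangle; moreover $F$ vanishes on the two lines through $(\mu_1,\mu_2)$ and $F(x_1,x_2)=\Delta X_{(x_1-\mu_1,x_2-\mu_2);(\mu_1,\mu_2)}$ is itself a rectangular increment, so its $\eta$--moments are controlled. It thus suffices to build a continuous modification $\widetilde F$ of $F$ with $\Delta\widetilde F$ satisfying \eqref{eqn:theoremkolmo} and to set $\widetilde X(x_1,x_2):=\widetilde F(x_1,x_2)+X(x_1,\mu_2)+X(\mu_1,x_2)-X(\mu_1,\mu_2)$, whose rectangular increments coincide with those of $\widetilde F$.

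The probabilistic heart is a dyadic estimate. Fix $\gamma'\in(\gamma,\beta/\eta)$ and, for integers $p,q$, consider the grid rectangles $R^{p,q}_{i,j}=[i2^{-p},(i+1)2^{-p}]\times[j2^{-q},(j+1)2^{-q}]$; write $M_{p,q}$ for the maximum of $|\Delta F(R^{p,q}_{i,j})|$ over the $\lesssim_{I,J}2^{p+q}$ cells meeting $I\times J$. Set $s(p,q):=\min(p,q)(1-\alpha)+\max(p,q)(1+\alpha)=(p+q)+\alpha|p-q|$, so that $\rho(2^{-p},2^{-q})=2^{-s(p,q)}$. Applying Markov's inequality and \eqref{eqn:theoremkolmohyp} cell by cell with threshold $\lambda_{p,q}=2^{-\gamma' s(p,q)}$, a union bound gives $\PP(M_{p,q}>\lambda_{p,q})\lesssim 2^{p+q-(1+\beta-\gamma'\eta)s(p,q)}=2^{-\delta(p+q)-(1+\delta)\alpha|p-q|}$ with $\delta:=\beta-\gamma'\eta>0$. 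This is summable over the $(p,q)$ with $2^{-p},2^{-q}<R$, so Borel--Cantelli produces an a.s. finite $K(\omega)$ with $M_{p,q}\le K(\omega)2^{-\gamma' s(p,q)}$ for all admissible $p,q$.

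I would then pass from single cells to arbitrary dyadic rectangles by decomposition rather than genuine chaining. Any rectangle $[a_1,b_1]\times[a_2,b_2]$ with dyadic corners and sides $<R$ is obtained, after writing each edge as a union of at most two dyadic intervals per scale, as a grid union of cells $R^{p,q}_{i,j}$ with at most four of each type $(p,q)$ and with $p\ge n_1$, $q\ge n_2$, where $2^{-n_i}\le b_i-a_i<2^{-n_i+1}$. Finite additivity of the rectangular increment yields $|\Delta F([a_1,b_1]\times[a_2,b_2])|\le 4\sum_{p\ge n_1,q\ge n_2}M_{p,q}\le 4K\sum_{p\ge n_1,q\ge n_2}2^{-\gamma' s(p,q)}$. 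Since $s$ is nondecreasing in each argument on this quadrant, I split $2^{-\gamma' s(p,q)}=2^{-(\gamma'-\gamma)s(p,q)}\,2^{-\gamma s(p,q)}$, bound the first factor through a convergent geometric series using $s(p,q)\ge p+q$, and bound the second by its maximal value $2^{-\gamma s(n_1,n_2)}=\rho(2^{-n_1},2^{-n_2})^\gamma\le\rho(b_1-a_1,b_2-a_2)^\gamma$, which gives \eqref{eqn:theoremkolmo} for all dyadic rectangles.

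Finally I would upgrade to the modification and to all rectangles. Applying the dyadic bound to the two rectangular increments in $F(u)-F(v)=[F(u_1,u_2)-F(v_1,u_2)]+[F(v_1,u_2)-F(v_1,v_2)]$ (each anchored on a coordinate line, where $F$ vanishes) furnishes a genuine anisotropic modulus of continuity for $F$ on the dyadic grid, so $F$ extends to an a.s. continuous field $\widetilde F$; continuity in probability, immediate from \eqref{eqn:theoremkolmohyp}, identifies this extension as a modification of $F$. The Hölder bound then passes from dyadic corners to all corners by continuity of $\widetilde F$ and of $\rho$, and steps with a side $\ge R$ are reassembled from boundedly many sub--rectangles of side $<R$, at the cost of a constant depending only on $I,J,R$. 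I expect the decisive obstacle to be precisely the summation step: the exponent $s(p,q)=(p+q)+\alpha|p-q|$ does \emph{not} factor into a coordinate--$1$ and a coordinate--$2$ contribution unless $\alpha=0$, so one cannot simply multiply two one--dimensional bounds, and at $\alpha=1$ the summand ceases to decay in the smaller index and produces logarithmic losses; the remedy is to spend part of the regularity budget by taking $\gamma<\gamma'<\beta/\eta$, so that the residual series $\sum 2^{-(\gamma'-\gamma)s(p,q)}$ converges to a constant, which is exactly why the exponent reaches every $\gamma<\beta/\eta$ but not the endpoint.
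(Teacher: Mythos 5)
Your proof is correct, and it reaches the conclusion by a genuinely different route than the paper in the two decisive steps. Where the paper performs a two-dimensional chaining via binary expansions of $t_1,s_1,t_2,s_2$ anchored at the scales $p_1,p_2$ (producing the nine-term telescoping \eqref{eqn:3-5}--\eqref{eqn:3-13}, each summed as a geometric series), you exploit the finite additivity of the rectangular increment over grid partitions: a dyadic rectangle splits into at most four standard cells per scale pair $(p,q)$ with $p\ge n_1$, $q\ge n_2$, so the whole estimate reduces to controlling the single maximal quantity $M_{p,q}$ over the grid, which is combinatorially cleaner and replaces the paper's eight separate sum estimates. The second difference is how the endpoint case is handled: the paper's summation at $\alpha=1$ degenerates (the summand no longer decays in the smaller index), producing the logarithmic factor $2^{-2\gamma p_1}p_1$, which it then absorbs by dropping to $\gamma'<\gamma$ in a separate argument; you instead spend the regularity budget up front, running Borel--Cantelli at an intermediate exponent $\gamma'\in(\gamma,\beta/\eta)$ and splitting $2^{-\gamma' s(p,q)}=2^{-(\gamma'-\gamma)s(p,q)}\,2^{-\gamma s(p,q)}$, so that the residual series converges via $s(p,q)\ge p+q$ uniformly in $\alpha\in[0,1]$ and no case distinction is needed --- this buys a unified treatment of $\alpha=1$ at the (irrelevant) cost of a worse constant for $\alpha<1$. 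Your monotonicity claims check out ($s$ is nondecreasing in each argument precisely because $\alpha\le 1$, and $\rho$ is nondecreasing in each of $|h_1|,|h_2|$), your anchoring at the box corner $(\mu_1,\mu_2)$ is equivalent to the paper's reduction to a field vanishing on the axes, and your identification of the extension as a modification by continuity in probability is the paper's argument verbatim. Two small points you leave implicit that deserve a line each: the theorem asks for one version valid for \emph{every} $\gamma\in[0,\beta/\eta)$ simultaneously, so, as in the paper, you should intersect the a.s.\ events over a sequence $\gamma_n\nearrow\beta/\eta$ (harmless, since your dyadic-limit version does not depend on $\gamma$); and patching the per-box constructions into a single version on $\R^2$ requires the same countable-exhaustion remark the paper itself glosses over, so you are no less rigorous than the source on that front.
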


\begin{proof}
    Let us first remark that we can assume, without loss of generality, that the field $\{X_{(x_1,x_2)}\}_{(x_1,x_2) \in \R^2}$ vanishes on the horizontal and vertical axes, i.e., for all $x \in \R$, $X_{(x,0)}=X_{(0,x)}=0$. Indeed, if it is not the case, it suffices to consider the field $\{Y_{(x_1,x_2)}\}_{(x_1,x_2) \in \R^2}$ defined, for all $(x_1,x_2) \in \R^2$, by
    \[ Y_{(x_1,x_2)}=X_{(x_1,x_2)}-X_{(x_1,0)}-X_{(0,x_2)}+X_{(0,0)}\]
    and to note that, for every such  $(x_1,x_2)$ and $(h_1,h_2) \in \R^2$, we have
    \[ \Delta  X_{(h_1,h_2);(x_1,x_2)}= \Delta  Y_{(h_1,h_2);(x_1,x_2)}.\]
Furthermore, since $\R^2$ can be written as a countable union  of rectangle, we can restrict our attention to the subfields  $\{{X}_{(x_1,x_2)}\}_{(x_1,x_2) \in I^2}$, where $I=[a,b] \times [c,d]$. For simplicity, we further restrict our attention to the field defined on $R=[0,1]^2 $. We use the strategy employed, for instance, in \cite{MR3497465} to establish the standard Kolomogrov's continuity Theorem. Namely, we consider the sets
    \[ D := \left\{ \frac{k}{2^j} \, : \, j \in \N, k \in \{0,\dots,2^{j}-1 \}\right\}\]
    of dyadic numbers of $[0,1]$ and prove the announced regularity properties on $D^2$. The version $\{\widetilde{X}_{(x_1,x_2)}\}_{(x_1,x_2) \in [0,1]^2}$ is then constructed by exploiting the density of $D$ in $[0,1]$ and some convergence arguments. Although the structure of the proof is standard, we deal with specific arguments required by the ``rectangular properties'' of the field of interest. 

  Let us fix $\gamma \in [0,\frac{\beta}{\eta})$ for the moment. For every $(j_1,j_2) \in \N^2$, the probability  that there exists $ (k_1,k_2) \in \{0,\dots,2^{j_1}-1\}\times \{0,\dots,2^{j_2}-1\} $ and $a_1,a_2 \in \{-1,1\}$ such that 
  $$|\Delta  X_{(\frac{a_1}{2^{j_1}},\frac{a_2}{2^{j_2}});(\frac{k_1}{2^{j_1}},\frac{k_2}{2^{j_2}})}| \geq 2^{-\gamma(\max\{j_1,j_2\}(1+\alpha)+\min\{j_1,j_2\}(1-\alpha))}
  $$
  is bounded above by 
\begin{align*}
 \sum_{a_1,a_2 \in \{-1,1\}} & \sum_{k_1=0}^{2^{j_1}-1} \sum_{k_2=0}^{2^{j_2}-1} \PP\left(|\Delta  X_{(\frac{a_1}{2^{j_1}},\frac{a_2}{2^{j_2}});(\frac{k_1}{2^{j_1}},\frac{k_2}{2^{j_2}})}| \geq 2^{-\gamma(\max\{j_1,j_2\}(1+\alpha)+\min\{j_1,j_2\}(1-\alpha))}\right) \\ 
         & \leq 4 c \sum_{k_1=0}^{2^{j_1}-1} \sum_{k_2=0}^{2^{j_2}-1} 2^{(\gamma \eta- \beta -1)(\max\{j_1,j_2\}(1+\alpha)+\min\{j_1,j_2\}(1-\alpha))} \\
        & \leq 4 c 2^{(j_1+j_2)(\gamma \eta - \beta)}
 \end{align*}
    using Markov's inequality, where the last inequality follows from
    $$
    2^{- \max\{j_1,j_2\}(1+\alpha)-\min\{j_1,j_2\}(1-\alpha)} \leq 2^{-(j_1+j_2)}.
    $$
 As $\gamma \eta-\beta < 0$, 
 Borel-Cantelli's Lemma entails the existence of an event $\Omega_\gamma$ of probability $1$, such that, for all $\omega \in \Omega_\gamma$ there is $C_\gamma(\omega)>0$ such that, for every $(j_1,j_2) \in \N^2$ and every $ (k_1,k_2) \in \{0,\dots,2^{j_1}-1\}\times \{0,\dots,2^{j_2}-1\} $ and every $a_1,a_2 \in \{-1,1\}$
\begin{equation}\label{eqn:inesurdyadique}
    |\Delta  X_{(\frac{a_1}{2^{j_1}},\frac{a_2}{2^{j_2}});(\frac{k_1}{2^{j_1}},\frac{k_2}{2^{j_2}})}(\omega)| \leq C_\gamma(\omega) 2^{-\gamma(\max\{j_1,j_2\}(1+\alpha)+\min\{j_1,j_2\}(1-\alpha))}.
\end{equation}
Now, let us consider $t_1,t_2,s_1,s_2 \in D$; we will bound from above
\begin{equation} \label{eqn:kolmo:tobound}
    |X_{(t_1,t_2) }-X_{(t_1,s_2) }-X_{(s_1,t_2) }+X_{(s_1,s_2) }|
\end{equation}
on $\Omega_\gamma$.
Let $p_1,p_2 \in \N$ be such that
\begin{equation}
    2^{-(p_1+1)} \leq |t_1-s_1| < 2^{-p_1} \text{ and } 2^{-(p_2+1)} \leq |t_2-s_2| < 2^{-p_2}. 
\end{equation}
It means that we can write
\begin{align*}
    t_1 &= \frac{k_1+\varepsilon_0}{2^{p_1}} + \frac{\varepsilon_1}{2^{p_1 +1}}+\dots +\frac{\varepsilon_{n_1}}{2^{p_1 +n_1}} \\
    s_1 &= \frac{k_1+\varepsilon_0'}{2^{p_1}} + \frac{\varepsilon_1'}{2^{p_1 +1}}+\dots +\frac{\varepsilon_{n_1}'}{2^{p_1 +n_1}} \\
    t_2 &= \frac{k_2+\delta_0}{2^{p_2}} + \frac{\delta_1}{2^{p_2 +1}}+\dots +\frac{\delta_{n_2}}{2^{p_2 +n_2}} \\
    s_2 &= \frac{k_2+\delta_0'}{2^{p_2}} + \frac{\delta_1'}{2^{p_2+1}}+\dots +\frac{\delta_{n_2}'}{2^{p_2 +n_2}} 
\end{align*}
for some $n_1,n_2 \in \N$ and $\varepsilon_0,\varepsilon_1,\dots,\varepsilon_{n_1}; \varepsilon_0',\varepsilon_1',\dots,\varepsilon_{n_1}'; \delta_0,\delta_1,\dots,\delta_{n_2}; \delta_0',\delta_1',\dots,\delta_{n_2}' \in \{0,1\}$ such that $\varepsilon_0=0$ if $t_1 \leq s_1$, $\varepsilon_0'=0$ if $s_1 < t_1$, $\delta_0=0$ if $t_2 \leq s_2$ and $\delta_0'=0$ if $s_2 \leq t_2$. In this case, for all $0 \leq j_1 \leq n_1$ and $0 \leq j_2 \leq n_2$, we write
\begin{align*}
    t_1^{(j_1)} &= \frac{k_1+\varepsilon_0}{2^{p_1}} + \frac{\varepsilon_1}{2^{p_1 +1}}+\dots +\frac{\varepsilon_{j_1}}{2^{p_1 +j_1}} \\
    s_1^{(j_1)} &= \frac{k_1+\varepsilon_0'}{2^{p_1}} + \frac{\varepsilon_1'}{2^{p_1 +1}}+\dots +\frac{\varepsilon_{j_1}'}{2^{p_1 +j_1}} \\
    t_2^{(j_2)} &= \frac{k_2+\delta_0}{2^{p_2}} + \frac{\delta_1}{2^{p_2 +1}}+\dots +\frac{\delta_{j_2}}{2^{p_2 +j_2}} \\
    s_2^{(j_2)} &= \frac{k_2+\delta_0'}{2^{p_2}} + \frac{\delta_1'}{2^{p_2+1}}+\dots +\frac{\delta_{j_2}'}{2^{p_2 +j_2}} 
\end{align*}
and get that \eqref{eqn:kolmo:tobound} can be bounded by 
\begin{align}    
& |X_{(t_1,t_2) }-X_{(t_1,s_2) }-X_{(s_1,t_2) }+X_{(s_1,s_2) }| \nonumber \\ 
     & \leq  |X_{(t_1^{(0)},t_2^{(0)}) }-X_{(t_1^{(0)},s_2^{(0)}) }-X_{(s_1^{(0)},t_2^{(0)}) }+X_{(s_1^{(0)},s_2^{(0)}) }|\label{eqn:3-5}\\
    & + \sum_{j_1=0}^{n_1-1} \sum_{j_2=0}^{n_2-1} |X_{(t_1^{(j_1+1)},t_2^{(j_2+1)}) }-X_{(t_1^{(j_1)},t_2^{(j_2+1)}) }-X_{(t_1^{(j_1+1)},t_2^{(j_2)}) }+X_{(t_1^{(j_1)},t_2^{(j_2)}) }| \label{eqn:3-6}\\
    &+ \sum_{j_1=0}^{n_1-1} \sum_{j_2=0}^{n_2-1} |X_{(t_1^{(j_1+1)},s_2^{(j_2+1)}) }-X_{(t_1^{(j_1)},s_2^{(j_2+1)}) }-X_{(t_1^{(j_1+1)},s_2^{(j_2)}) }+X_{(t_1^{(j_1)},s_2^{(j_2)}) }| \label{eqn:3-7}\\
    & + \sum_{j_1=0}^{n_1-1} \sum_{j_2=0}^{n_2-1} |X_{(s_1^{(j_1+1)},t_2^{(j_2+1)}) }-X_{(s_1^{(j_1)},t_2^{(j_2+1)}) }-X_{(s_1^{(j_1+1)},t_2^{(j_2)}) }+X_{(s_1^{(j_1)},t_2^{(j_2)}) }| \label{eqn:3-8}\\
    & + \sum_{j_1=0}^{n_1-1} \sum_{j_2=0}^{n_2-1} |X_{(s_1^{(j_1+1)},s_2^{(j_2+1)}) }-X_{(s_1^{(j_1)},s_2^{(j_2+1)}) }-X_{(s_1^{(j_1+1)},s_2^{(j_2)}) }+X_{(s_1^{(j_1)},s_2^{(j_2)}) }| \label{eqn:3-9}\\
    & + \sum_{j_1=0}^{n_1-1} |X_{(t_1^{(j_1+1)},t_2^{(0)}) }-X_{(t_1^{(j_1)},t_2^{(0)}) }-X_{(t_1^{(j_1+1)},s_2^{(0)}) }+X_{(t_1^{(j_1)},s_2^{(0)}) }| \label{eqn:3-10}\\
    & + \sum_{j_1=0}^{n_1-1} |X_{(s_1^{(j_1+1)},s_2^{(0)}) }-X_{(s_1^{(j_1)},s_2^{(0)}) }-X_{(s_1^{(j_1+1)},t_2^{(0)}) }+X_{(s_1^{(j_1)},t_2^{(0)}) }| \label{eqn:3-11}\\
    &+ \sum_{j_2=0}^{n_2-1} |X_{(t_1^{(0)},t_2^{(j_2+1)}) }-X_{(t_1^{(0)},t_2^{(j_2)}) }-X_{(s_1^{(0)},t_2^{(j_2+1)}) }+X_{(s_1^{(0)},t_2^{(j_2)}) }| \label{eqn:3-12}\\
    &+ \sum_{j_2=0}^{n_2-1} |X_{(s_1^{(0)},s_2^{(j_2+1)}) }-X_{(s_1^{(0)},s_2^{(j_2)}) }-X_{(t_1^{(0)},s_2^{(j_2+1)}) }+X_{(t_1^{(0)},s_2^{(j_2)}) }|. \label{eqn:3-13}
\end{align}

Let us assume from now that $\alpha \in [0,1)$, the case $\alpha=1$ requiring specific arguments developed later. Up to a permutation of the indices, we can assume $p_1 \geq p_2$. In this case, from \eqref{eqn:inesurdyadique}, we directly deduce, for all $\omega \in \Omega_\gamma$
\[ \eqref{eqn:3-5} \leq C_\gamma(\omega) 2^{-\gamma(p_1(1+ \alpha)+p_2(1-\alpha))}.\]
Similarly, for all $\omega \in \Omega_\gamma$, we bound \eqref{eqn:3-6}, \eqref{eqn:3-7}, \eqref{eqn:3-8} and \eqref{eqn:3-9} from above by
\begin{eqnarray*}
   & &C_\gamma(\omega) \sum_{j_1=0}^{n_1-1} \sum_{j_2=0}^{n_2-1} 2^{-\gamma(\max\{j_1+p_1+1,j_2+p_2+1\}(1+ \alpha)+\min\{j_1+p_1+1,j_2+p_2+1\}(1-\alpha))} \\
   & \leq & C_\gamma(\omega) \left( \sum_{j_1=0}^{+ \infty} \sum_{j_2 \leq j_1+p_1-p_2} 2^{-\gamma((j_1+p_1+1)(1+ \alpha)+(j_2+p_2+1)(1-\alpha))} \right. \\
   & &\quad  + \left. \sum_{j_1=0}^{+ \infty} \sum_{j_2 > j_1+p_1-p_2} 2^{-\gamma((j_2+p_2+1)(1+ \alpha)+(j_1+p_1+1)(1-\alpha))} \right) \\
   & \leq&  C_\gamma(\omega) \left(  2^{-\gamma(p_1 (1+\alpha) +p_2 (1-\alpha))} \sum_{j_1=0}^{+ \infty} \sum_{j_2=0}^{+ \infty} 2^{-\gamma(j_1 (1+\alpha) +j_2 (1-\alpha)+2)} \right. \\
   && \quad + \left. \sum_{j_1=0}^{+\infty} \sum_{j=1}^{+ \infty} 2^{-\gamma((j+j_1+p_1+1)(1+\alpha)+(j_1+p_1+1)(1-\alpha))}\right) \\
   & \leq & C_\gamma(\omega) ( c'  2^{-\gamma(p_1 (1+\alpha) +p_2 (1-\alpha))} + c'' 2^{- 2\gamma p_1} ) \\
   & \leq &C_\gamma(\omega) (c'+c'') 2^{-\gamma(p_1 (1+\alpha) +p_2 (1-\alpha))} .
\end{eqnarray*}
Using once again the inequality \eqref{eqn:inesurdyadique} on $\Omega_\gamma$, we bound \eqref{eqn:3-10} and \eqref{eqn:3-11} from above by
\begin{align*}
     C_\gamma(\omega) \sum_{j_1=0}^{+ \infty} 2^{-\gamma ((j_1+p_1+1)(1+\alpha)+p_2(1-\alpha))} \leq  C_\gamma(\omega) c' 2^{-\gamma(p_1 (1+\alpha) +p_2 (1-\alpha))}
\end{align*}
while we bound  \eqref{eqn:3-12} and \eqref{eqn:3-13} from above by
\begin{eqnarray*}
    &&C_\gamma(\omega) \sum_{j_2=0}^{n_2-1} 2^{-\gamma(\max\{p_1,j_2+p_2+1\}(1+ \alpha)+\min\{p_1,j_2+p_2+1\}(1-\alpha))} \\
    & \leq &C_\gamma(\omega) \left( \sum_{j_2 < p_1-p_2} 2^{-\gamma(p_1(1+\alpha)+(p_2+j_2+1)(1-\alpha))}  \right. \\
    &&\quad  + \left. \sum_{j_2 \geq p_1-p_2} 2^{-\gamma((p_2+j_2+1)(1+\alpha)+p_1(1-\alpha))} \right) \\
    & \leq&   C_\gamma(\omega) \left( c'2^{-\gamma(p_1 (1+\alpha) +p_2 (1-\alpha))}  + \sum_{j=0}^{+ \infty} 2^{-\gamma((p_1+j+1)(1+\alpha)+p_1(1-\alpha))} \right) \\
    & \leq & C_\gamma(\omega) (c'+c'') 2^{-\gamma(p_1 (1+\alpha) +p_2 (1-\alpha))}.
\end{eqnarray*}
In total, for all $\omega \in \Omega_\gamma$, we obtain
\begin{align}\label{eqn:accroirectdyadique}
    \eqref{eqn:kolmo:tobound} & \leq C_\gamma(\omega)'2^{-\gamma(p_1 (1+\alpha) +p_2 (1-\alpha))} \nonumber \\ 
    & \leq C_\gamma(\omega)'' \left(\max\{|t_1-s_1|,|t_2-s_2| \}^{1-\alpha}\min\{|t_1-s_1|,|t_2-s_2| \}^{1+\alpha}\right)^{\gamma}.
\end{align}
Note that, since we assume that the field $\{X_{(x_1,x_2)}\}_{(x_1,x_2) \in [0,1]^2}$ vanishes on the horizontal and vertical axes,  inequality \eqref{eqn:accroirectdyadique}  also implies
\begin{align}
    |X_{(t_1,s_2)}-X_{(s_1,s_2)}| & = |X_{(t_1,s_2)}-X_{(t_1,0)}-X_{(s_1,s_2)}+X_{(s_1,0)}| \nonumber \\
    & \leq C_\gamma(\omega)''  \left(\max\{|t_1-s_1|,|s_2| \}^{1-\alpha}\min\{|t_1-s_1|,|s_2| \}^{1+\alpha}\right)^{\gamma} \label{eqn:reguhorizon}
\end{align}
as well as
\begin{align}\label{eqn:reguverti}
       |X_{(s_1,t_2)}-X_{(s_1,s_2)}| \leq C_\gamma(\omega)'' \left(\max\{|s_1|,|t_2-s_2| \}^{1-\alpha}\min\{|s_1|,|t_2-s_2| \}^{1+\alpha}\right)^{\gamma}.
\end{align}

Let us now consider a sequence $(\gamma_n)_n$ with $\gamma_n \nearrow \frac{\beta}{\eta}$. The event
\[ \Omega_1 := \bigcap_n \Omega_{\gamma_n}\]
is of probability $1$ and the preceding argument shows that, on $\Omega_1$, the sample paths of $\{X_{(x_1,x_2)}\}_{(x_1,x_2) \in [0,1]^2}$ have the desired rectangular regularity property on $D^2$. Now, it remains to construct the version $\{\widetilde{X}_{(x_1,x_2)}\}_{(x_1,x_2) \in [0,1]^2}$ on $[0,1]^2$. 

First, on $\Omega_1$, we extent $X$ by using the density of $D^2$ in $[0,1]^2$ together with \eqref{eqn:reguhorizon} and \eqref{eqn:reguverti}. More precisely, if $(x_1,x_2) \in [0,1]$ and if $((x_1^{(j)},x_2^{(j)}))_j \subset D^2$ is such that $((x_1^{(j)},x_2^{(j)}))_j \to (x_1,x_2)$, then the inequalities \eqref{eqn:reguhorizon} and \eqref{eqn:reguverti} insure that $({X}_{(x_1^{(j)},x_2^{(j)})})_j$ is Cauchy. Hence,  we set $\widetilde{X}_{(x_1,x_2)}=\lim_j {X}_{(x_1^{(j)},x_2^{(j)})}$.  
Secondly, we set $\widetilde{X}_{(x_1,x_2)}=0$ on $\Omega_1^c$.

It is then clear that $\{\widetilde{X}_{(x_1,x_2)}\}_{(x_1,x_2) \in [0,1]^2}$ has the desired rectangular regularity property. To conclude,  it suffices now  to show that it is a version of $\{X _{(x_1,x_2)}\}_{(x_1,x_2) \in [0,1]^2}$. 
Of course, it suffices to work with $(x_1,x_2) \notin D^2$. Fix $\varepsilon>0$ and $j$. Using the triangle inequality, the fact that $\{X_{(x_1,x_2)}\}_{(x_1,x_2) \in [0,1]^2}$ vanishes on the horizontal and vertical axes, Markov's inequality and assumption \eqref{eqn:theoremkolmohyp}, we  get
\begin{align*}
    \PP(|{X}_{(x_1^{(j)},x_2^{(j)})}- X_{(x_1,x_2)}| \geq \varepsilon) & \leq \PP(|{X}_{(x_1^{(j)},x_2^{(j)})}- X_{(x_1^{(j)},x_2)}| \geq \frac{\varepsilon}{2})  + \PP(|{X}_{(x_1^{(j)},x_2)}- X_{(x_1,x_2)}| \geq \frac{\varepsilon}{2}) \\
    & \leq \PP(|{X}_{(x_1^{(j)},x_2^{(j)})}- X_{(x_1^{(j)},x_2)}-X_{(0,x_2^{(j)})}+X_{(0,x_2)}| \geq \frac{\varepsilon}{2}) \\
    &  \quad + \PP(|{X}_{(x_1^{(j)},x_2)}- X_{(x_1,x_2)}-X_{(x_1^{(j)},0)}+X_{(x_1,0)}| \geq \frac{\varepsilon}{2}) \\
    & \leq c 2^{\eta} \frac{\left(\max\{|x_1^{(j)}|,|x_2-x_2^{(j)}| \}^{1-\alpha}\min\{|x_1^{(j)}|,|x_2-x_2^{(j)}| \}^{1+\alpha}\right)^{1+\beta}}{\varepsilon^\eta} \\
    & \quad + c 2^{\eta}  \frac{\left(\max\{|x_1-x_1^{(j)}|,|x_2| \}^{1-\alpha}\min\{|x_1-x_1^{(j)}|,|x_2| \}^{1+\alpha}\right)^{1+\beta}}{\varepsilon^\eta}.
\end{align*}
This last inequality implies that the convergence ${X}_{(x_1^{(j)},x_2^{(j)})} \to X_{(x_1,x_2)}$ holds in probability and thus almost surely for a subsequence.  It directly follows that  $\PP(\widetilde{X}_{(x_1,x_2)}=X_{(x_1,x_2)})=1$.

\smallskip

To complete the proof, it remains to consider the case  $\alpha=1$. Using the same arguments, for any $\gamma \in [0, \frac{\beta}{\eta})$, we get
\[ \eqref{eqn:kolmo:tobound} \leq C_\gamma(\omega)' 2^{-2\gamma p_1} p_1 \leq   C_\gamma(\omega)'' \left(\min\{|t_1-s_1|,|t_2-s_2| \}\right)^{2 \gamma}  \left|\log\left(\min\{|t_1-s_1|,|t_2-s_2| \}\right)\right|\]
for all $\omega \in \Omega_\gamma$. In particular, for any $\gamma' < \gamma$, there is $C_{\gamma'}(\omega)>0$ such that
\[ \eqref{eqn:kolmo:tobound} \leq  C_{\gamma'}(\omega) \left(\min\{|t_1-s_1|,|t_2-s_2| \}\right)^{2 \gamma'},\]
which is sufficient to conclude the proof in exactly the same way as in the case $\alpha \in [0,1)$.
\end{proof}

\begin{remark}
 Let $\{X_{(x_1,x_2)}\}_{(x_1,x_2) \in \R^2}$ be a stochastic field vanishing on the axes and satisfying the inequality \eqref{eqn:theoremkolmo}. Then, for $x_1, x_2,h \in \R$, we have
 \begin{align*} |X_{(x_1+h,x_2)}-X_{(x_1,x_2)}| &= |X_{(x_1+h,x_2)}-X_{(x_1,x_2)}-X_{(x_1+h,0)} + X_{(x_1,0)}|\\
 & \leq  C\left(\max\{|h|,|x_2| \}^{1-\alpha}\min\{|h|,|x_2| \}^{1+\alpha}\right)^\gamma
 \end{align*}
 and, similarly
 \[ |X_{(x_1,x_2+h)}-X_{(x_1,x_2)}| \leq  C\left(\max\{|h|,|x_1| \}^{1-\alpha}\min\{|h|,|x_1| \}^{1+\alpha}\right)^\gamma.\]
 In particular, this means that the horizontal and vertical increments of the field are locally Hölder-continuous. Note that, in the case where the field $\{X_{(x_1,x_2)}\}_{(x_1,x_2) \in \R^2}$ satisfies the assumptions of  Theorem \ref{thm:kolmo}, this fact can also be observed by applying the (standard) Kolmogorov's continuity Theorem to  the increments of the field.
\end{remark}

Proposition \ref{prop:regularity} will now be easily obtained using the equivalence of Gaussian
moments: if $X$ is a Gaussian random variable then, for all $j \in \N$, we have
\[\E[X^{2j}] = \frac{(2j)!}{2^j j!} \E[X^2]^j.\]
Indeed, this relation allows us to state the following classical corollary of our version of Kolmogorov's continuity Theorem.

\begin{proposition}\label{regularity}
     Let $\alpha \in [0,1]$ be fixed. Assume that $\{X_{(x_1,x_2)}\}_{(x_1,x_2) \in \R^2}$ is a Gaussian stochastic field for which there exist some constants $\beta, c, R>0$ such that
  \[  \E[|\Delta  X_{(h_1,h_2);(x_1,x_2)}|^2] \leq c \left(\max\{|h_1|,|h_2| \}^{1-\alpha}\min\{|h_1|,|h_2| \}^{1+\alpha}\right)^{\beta}\]
  for all $(x_1,x_2)\in \R^2$ and $(h_1,h_2) \in \R^2$ with $|h_1| < R$ and $|h_2|<R$. Then, there exists a version $\{\widetilde{X}_{(x_1,x_2)}\}_{(x_1,x_2) \in \R^2}$ of $\{X_{(x_1,x_2)}\}_{(x_1,x_2) \in \R^2}$ for which, for all $\gamma \in [0,\frac{\beta}{2})$ and for every bounded intervals $I,J$ of $\mathbb{R}$, for all $\omega \in \Omega$, there exists a finite constant $C(\omega)>0$ such that
    \[
        |\Delta  \widetilde{X}_{(h_1,h_2);(x_1,x_2)}| \leq C(\omega)\left(\max\{|h_1|,|h_2| \}^{1-\alpha}\min\{|h_1|,|h_2| \}^{1+\alpha}\right)^\gamma
    \]
for every $x_1 \in I$, $x_2 \in J$ and $h_1, h_2 \in \mathbb{R}$ with $x_1+h_1 \in I$ and $x_2 + h_2 \in J$.
\end{proposition}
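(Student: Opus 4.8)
The plan is to bootstrap the $L^2$ hypothesis into an $L^\eta$ hypothesis of the exact form \eqref{eqn:theoremkolmohyp} for arbitrarily large even $\eta$, and then invoke Theorem \ref{thm:kolmo}. The mechanism is precisely the Gaussian moment equivalence recalled just above the statement. Since $\{X_{(x_1,x_2)}\}$ is a centered Gaussian field, each rectangular increment $\Delta X_{(h_1,h_2);(x_1,x_2)}$ — being a linear combination (with coefficients $+1,-1,-1,+1$ summing to zero) of jointly Gaussian values of the field — is itself a centered Gaussian random variable, so for every $j \in \N$ we may write $\E[|\Delta X_{(h_1,h_2);(x_1,x_2)}|^{2j}] = \frac{(2j)!}{2^j j!}\, \E[|\Delta X_{(h_1,h_2);(x_1,x_2)}|^2]^j$.

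First I would insert the hypothesis into this identity. Raising the assumed bound to the power $j$ gives, for all $(x_1,x_2),(h_1,h_2)$ with $|h_1|,|h_2| < R$,
\[
\E[|\Delta X_{(h_1,h_2);(x_1,x_2)}|^{2j}] \leq \frac{(2j)!}{2^j j!}\, c^j \left(\max\{|h_1|,|h_2|\}^{1-\alpha}\min\{|h_1|,|h_2|\}^{1+\alpha}\right)^{\beta j}.
\]
Writing $\beta j = 1 + (\beta j - 1)$, this is exactly hypothesis \eqref{eqn:theoremkolmohyp} of Theorem \ref{thm:kolmo} with exponent $\eta = 2j$, constant $c^{(j)} = \frac{(2j)!}{2^j j!}\, c^j$, and regularity parameter $\beta^{(j)} = \beta j - 1$, which is positive as soon as $j > 1/\beta$.

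Then I would apply Theorem \ref{thm:kolmo} for each such $j$. It produces a version whose rectangular increments satisfy \eqref{eqn:theoremkolmo} for every $\gamma \in [0, \beta^{(j)}/\eta) = [0, \tfrac{\beta j - 1}{2j}) = [0, \tfrac{\beta}{2} - \tfrac{1}{2j})$. The remaining step is to glue these into a single version valid on the full range $[0,\beta/2)$: since any two continuous versions of the same field are indistinguishable (they agree almost surely on the countable dense set $D^2$ used in Theorem \ref{thm:kolmo}, hence everywhere by continuity), the versions obtained for different $j$ coincide almost surely, and the chosen version inherits the regularity \eqref{eqn:theoremkolmo} for every $\gamma < \sup_{j} (\tfrac{\beta}{2} - \tfrac{1}{2j}) = \tfrac{\beta}{2}$.

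There is no serious obstacle here: the argument is the standard Gaussian upgrade of Kolmogorov's criterion, and all the genuine work — the dyadic chaining adapted to rectangular increments — is already carried out in Theorem \ref{thm:kolmo}. The only point requiring a little care is the last one, namely that each fixed $j$ reaches only exponents strictly below $\tfrac{\beta}{2} - \tfrac{1}{2j}$, so one must let $j \to \infty$ and argue that a single version serves all $\gamma < \beta/2$ simultaneously, exactly in the spirit of the intersection of probability-one events $\Omega_{\gamma_n}$ at the end of the proof of Theorem \ref{thm:kolmo}.
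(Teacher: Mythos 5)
Your proposal is correct and is essentially the paper's own argument: the paper states Proposition \ref{regularity} precisely as a ``classical corollary'' of Theorem \ref{thm:kolmo} via the Gaussian moment equivalence $\E[X^{2j}] = \frac{(2j)!}{2^j j!}\,\E[X^2]^j$, which is exactly your bootstrap with $\eta = 2j$, $\beta^{(j)} = \beta j - 1 > 0$ for $j > 1/\beta$, and $j \to \infty$ to exhaust $[0,\beta/2)$, including the intersection-of-events gluing in the spirit of the $\Omega_{\gamma_n}$ at the end of the proof of Theorem \ref{thm:kolmo}. One cosmetic caveat: the moment identity requires the increment to be \emph{centered}, which holds here because the paper's Gaussian fields are centered --- your remark that the coefficients $+1,-1,-1,+1$ sum to zero would not by itself kill a non-constant mean function (e.g.\ $m(x_1,x_2)=x_1x_2$ has $\Delta m_{(h_1,h_2);(x_1,x_2)}=h_1h_2\neq 0$), though even then the identity could be replaced by the one-sided bound $\E[|Z|^{2j}] \leq C_j\,\E[Z^2]^j$, which is all the argument uses.
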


\section{Irregularities of the trajectories}


The aim of this section is to prove that the regularity of the rectangular increments of the WTFBFs, as obtained in Proposition \ref{prop:regularity}  as a consequence of the Kolmogorov continuity-type Theorem \ref{thm:kolmo}, is optimal.

\begin{theorem}\label{thm:irreg}
Fix $\alpha \in [0,1]$ and $H \in (0,1)$.    Almost surely, for every bounded intervals $I,J$ of $\R$, one has
$$
 \sup_{\stackrel[h_1 \neq 0, h_2 \neq 0]{(x_1,x_2),  (x_1+h_1, x_2+h_2) \in I \times J }{}}  \frac{|\Delta  X^{\alpha,H}_{(h_1,h_2);(x_1,x_2)}| }{\left(\max\{|h_1|,|h_2| \}^{1-\alpha}\min\{|h_1|,|h_2| \}^{1+\alpha}\right)^{H}} = + \infty.
$$
\end{theorem}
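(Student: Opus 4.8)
The plan is to use the exact harmonizable representation of the rectangular increments together with the self-similarity of the field to reduce the statement, at each dyadic scale, to a lower bound on the maximum of a stationary Gaussian field, and then to invoke Sudakov's minoration and Gaussian concentration. First I would reduce the domain: since any bounded interval contains a rational subinterval and the supremum only grows with the domain, it suffices to produce an event of probability $1$ on which the supremum is infinite over a fixed rectangle, say $[0,1]^2$; the claim for arbitrary $I,J$ then follows by a countable intersection over rational rectangles, using the stationarity of the rectangular increments. On $[0,1]^2$ I would keep only the diagonal dyadic increments $h_1=h_2=2^{-n}$ based at $(k_1 2^{-n},k_2 2^{-n})$ with $0\le k_1,k_2<2^n$. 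For these one has $\max\{|h_1|,|h_2|\}^{1-\alpha}\min\{|h_1|,|h_2|\}^{1+\alpha}=2^{-2n}$, so the quantity whose supremum must be controlled is exactly $|\xi_{n,k_1,k_2}|$, where
\[ \xi_{n,k_1,k_2}:=2^{2nH}\,\Delta X^{\alpha,H}_{(2^{-n},2^{-n});(k_1 2^{-n},k_2 2^{-n})}. \]

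Next I would determine the law of $\{\xi_{n,k_1,k_2}\}$. A direct computation from the kernel $\mathcal K^{\alpha,H}$ shows that the rectangular increment of $X^{\alpha,H}$ is a centered Gaussian variable whose spectral density is $|e^{ih_1\xi_1}-1|^2|e^{ih_2\xi_2}-1|^2/\phi_{\alpha,H}(\xi_1,\xi_2)^2$, so in particular the increments are stationary. Using the scaling $\phi_{\alpha,H}(2^n\xi)=2^{n(2H+1)}\phi_{\alpha,H}(\xi)$ (the analytic form of self-similarity with exponent $2H$) and the change of variables $\xi\mapsto 2^n\xi$, I would show that for each fixed $n$ the family $\{\xi_{n,k_1,k_2}\}_{(k_1,k_2)}$ is a stationary centered Gaussian field on $\Z^2$ whose covariance
\[ \rho(m_1,m_2)=\int_{\R^2} e^{i(m_1\eta_1+m_2\eta_2)}\,\frac{|e^{i\eta_1}-1|^2|e^{i\eta_2}-1|^2}{\phi_{\alpha,H}(\eta)^2}\,d\eta \]
does \emph{not} depend on $n$. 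In particular every $\xi_{n,k_1,k_2}$ has the same variance $\sigma^2=\rho(0)=\int_{\R^2}g>0$, where $g(\eta):=|e^{i\eta_1}-1|^2|e^{i\eta_2}-1|^2/\phi_{\alpha,H}(\eta)^2$, this being finite by (the proof of) Lemma \ref{lem:biendefini}. Crucially $g\in L^1(\R^2)$ for every $\alpha\in[0,1]$, so Riemann--Lebesgue yields $\rho(m)\to 0$ as $|m|\to\infty$: distinct normalized increments decorrelate at large separation.

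The probabilistic core is then to prove that almost surely $\max_{0\le k_1,k_2<2^n}\xi_{n,k_1,k_2}\to+\infty$. Since $\rho(m)\to0$, I can fix once and for all an $M$ with $|\rho(m)|\le\sigma^2/2$ for $|m|_\infty\ge M$; the sub-lattice $M\Z^2\cap\{0,\dots,2^n-1\}^2$ then provides $\gtrsim 4^n/M^2$ points that are pairwise at distance at least $\sigma$ in the canonical metric $d(k,k')=\|\xi_{n,k}-\xi_{n,k'}\|_2=\sqrt{2(\sigma^2-\rho(k-k'))}$. Sudakov's minoration yields $\E[\max_{k}\xi_{n,k}]\ge c\,\sigma\sqrt{\log(4^n/M^2)}\gtrsim \sigma\sqrt n$, while the Borell--Tsirelson--Ibragimov--Sudakov concentration inequality (all variances equal $\sigma^2$) gives $\PP\big(\max_k\xi_{n,k}\le\tfrac12\E[\max_k\xi_{n,k}]\big)\le e^{-cn}$. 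As these probabilities are summable in $n$, Borel--Cantelli ensures that almost surely $\max_k\xi_{n,k}>\tfrac12 c\,\sigma\sqrt n$ for all large $n$; hence $\sup_{n,k}|\xi_{n,k}|=+\infty$, which is precisely the claim on $[0,1]^2$, and then on every $I\times J$.

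The step I expect to be the main obstacle is the decorrelation and its exact uniformity across scales: the whole argument relies on $\rho$ being \emph{scale-independent} (so that $\sigma$ and $M$ may be chosen once) and on $\rho(m)\to0$, i.e. on $g\in L^1$. This integrability is most delicate at $\alpha=1$, where $\phi_{\alpha,H}$ degenerates on the diagonal $\{|\xi_1|=|\xi_2|\}$ and where the companion estimate of Theorem \ref{thm:kolmo} already required a logarithmic correction; here, however, only integrability of $g$ is needed (and it still holds, by Lemma \ref{lem:biendefini}), so the Sudakov argument covers $\alpha=1$ uniformly, with no separate treatment. A secondary, routine point is the passage from the dyadic grid to arbitrary intervals and the simultaneous validity over all $I,J$, handled by the monotonicity and countable-intersection reduction together with the stationarity of the rectangular increments.
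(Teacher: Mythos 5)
Your proposal is correct, but it proves the theorem by a genuinely different route than the paper. The paper works with hyperbolic wavelet coefficients $c_{\bar{j},\bar{k}}$ of $X^{\alpha,H}$ in the Lemari\'e--Meyer basis: it fixes a dyadic point and the ray of scales $j_2=j_1+2$, splits each coefficient as $c_{\bar{j},\bar{k}}=\tilde{d}_{\bar{j},\bar{k}}+d_{\bar{j},\bar{k}}$ (Lemma \ref{lem:irreg4} making the tail $d_{\bar{j},\bar{k}}$ negligible, which is needed because $\psi$ is not compactly supported), obtains independence of coefficients at well-separated scales from the compact support of $\widehat{\psi}$ together with the two-sided variance estimate of Lemma \ref{lem:irreg1}, and then argues by contradiction: a finite constant $D$ in the Hölder-type bound would force $|c_{\bar{j},\bar{k}}|\leq C2^{-2j_1H}+o(2^{-2j_1H})$, while the Gaussian limsup lemma (Lemma \ref{lem:irreg2}) produces $|c_{\bar{j},\bar{k}}|\geq C''\sqrt{\log (j_1+2)}\,2^{-2j_1H}$ infinitely often. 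You instead bypass wavelets entirely: you exploit the exact scale-invariance of the correlation $\rho$ of the normalized diagonal dyadic increments (your computation $\phi_{\alpha,H}(2^n\boldsymbol{\xi})=2^{n(2H+1)}\phi_{\alpha,H}(\boldsymbol{\xi})$ is right, and $\int g<\infty$ is indeed exactly Lemma \ref{lem:biendefini} at $(x_1,x_2)=(1,1)$), get spatial decorrelation at a single scale from Riemann--Lebesgue, and convert this into an almost sure lower bound $\max_k \xi_{n,k}\gtrsim\sqrt{n}$ via Sudakov minoration, Borell--TIS and Borel--Cantelli. The trade-offs: the paper's method requires extra bookkeeping (the tail lemma, the case analysis in the variance computation, and the restriction to directions $j_2=j_1+p$ with $p\notin\{-1,0,1\}$ noted in its closing remark), but its coefficient estimates are reused for the Besov-space analysis of Section \ref{sec:space}; your method is more self-contained, treats $\alpha=1$ with no special argument, uses exact independence nowhere (only $\varepsilon$-separation in the canonical metric, which Sudakov tolerates), and yields a quantitative divergence rate $\sqrt{\log(1/h)}$ matching the paper's $\sqrt{\log}$ factor — at the price of invoking heavier general Gaussian machinery.

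Two minor points to nail down in a written version. First, the quantifier in the theorem is ``almost surely, for every $I,J$''; your countable-intersection reduction over rational rectangles handles this, but the transfer from $[0,1]^2$ to an arbitrary rational rectangle needs \emph{joint} stationarity of the increment field (the law of the whole family $\{\Delta X^{\alpha,H}_{(h_1,h_2);(x_1,x_2)}\}$ under translation of the base point), not merely stationarity of individual increments; this does hold here, as the covariance formula in the proof of Lemma \ref{lem:irreg3} depends on $(x_1-y_1,x_2-y_2)$ only — alternatively, simply rerun the grid/Sudakov argument on a dyadic grid adapted to each rational rectangle, since $\rho$ is the same at every scale. Second, Borell--TIS should be applied in its one-sided form around the mean with $\sigma_*^2=\sup_k\E[\xi_{n,k}^2]=\sigma^2$, constant in $n$, which is what makes the exceptional probabilities summable; you have this, but it is the precise place where the scale-independence of $\rho$ is indispensable, so it deserves an explicit sentence.
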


The proof of Theorem \ref{thm:irreg} is based on several lemmas, that rely on estimating the size  of the so-called ``hyperbolic wavelet coefficients'' of our fields, see Section \ref{sec:space} for more details about hyperbolic wavelet basis. In what follows, we will denote by $\{2^{j/2} \psi(2^j \cdot -k)  \, : \, (j,k) \in \Z^2 \}$  the Lemari\'e-Meyer orthonormal wavelet basis of the Hilbert space $L^2(\R)$, introduced in \cite{LM86}. Its particular features include the fact that the mother wavelet $\psi$ belongs to the Schwartz class of $C^\infty$ functions whose derivatives of all orders decay rapidly, that it has vanishing moments of every order and that 
\begin{equation}\label{eq:support}
\textrm{supp } \widehat{\psi} \subseteq \left[-\f{8\pi}{3},-\f{2\pi}{3}\right] \cup \left[\f{2\pi}{3},\f{8\pi}{3}\right] . 
\end{equation}
For every $(j_1,j_2), (k_1,k_2) \in \N^2$, we will use the compact notation
$$
\bj = (j_1,j_2) \quad \text{and} \quad \bk = (k_1,k_2)
$$
together with
$$
\max (\bj) = \max \{j_1,j_2 \}  \quad \text{and} \quad \min (\bj) = \min \{j_1,j_2 \}. 
$$
Let  $ c_{\bj,\bk}$, $(\bj,\bk) \in \Z^2$, denote the hyperbolic wavelet coefficients of  $X^{\alpha, H}$ in the the Lemari\'e-Meyer basis, defined by 
\begin{equation}\label{WC}
     c_{\bj,\bk}
     = 2^{j_1+j_2} \int_{\R^2} X^{\alpha,H}_{(x_1,x_2)} \psi(2^{j_1}x_1-k_1)\psi(2^{j_2}x_2-k_2) d\boldsymbol{x} .  
\end{equation}
We will show that these coefficients are independent as soon as they correspond to distant scales, and controlling their second-order moment will allow us to estimate their size as the scale increases. The vanishing moment of $\psi$  will then enable us to obtain information about the rectangular increments of the field. Our first objective will be to prove that  the wavelet coefficients given in Equation \eqref{WC}  are well-defined. Let us begin with the following lemma, which provides an estimate of the covariance of the rectangular increments of the process  $X^{\alpha, H}$. From now on, we assume that  $\alpha \in [0,1]$ and $H \in (0,1)$ are fixed. 

\begin{lemma}\label{lem:irreg3}
One has 
\begin{align*} 
&\big| \mathbb{E}\big[ \Delta  X^{\alpha,H}_{(h_1,h_2);(x_1,x_2)}\overline{\Delta   X^{\alpha,H}_{(\ell_1,\ell_2);(y_1,y_2)}}\big]  \big| \\[2ex]
& \quad \quad  \quad   \leq  \int_{\R^2}\frac{\min \{2, |h_1\xi_1| \}   \min \{2, |h_2\xi_2| \}   \min \{2, |\ell_1\xi_1| \}    \min \{2, |\ell_2\xi_2| \} | }{\big(\phi_{\alpha,H}(\xi_1,\xi_2)\big)^2 }d\boldsymbol{\xi}
\end{align*}
for all $(h_1,h_2),(\ell_1, \ell_2),(x_1,x_2),(y_1,y_2)  \in \mathbb{R}^2$. 
\end{lemma}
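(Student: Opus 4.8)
The plan is to realise the rectangular increment itself as a single stochastic integral and then reduce the covariance to a deterministic integral via the isometry of the Wiener integral underlying the harmonizable representation. First I would exploit the tensor-product form of the numerator of $\mathcal{K}^{\alpha,H}$. The rectangular increment operator acts on a product $f(x_1)g(x_2)$ as the product of the two one-dimensional increments, and since $(e^{i(x_1+h_1)\xi_1}-1)-(e^{ix_1\xi_1}-1)=e^{ix_1\xi_1}(e^{ih_1\xi_1}-1)$ (and similarly in the second variable), the four-term alternating sum defining $\Delta X^{\alpha,H}_{(h_1,h_2);(x_1,x_2)}$ collapses, by linearity of the integral, to
\[
\Delta X^{\alpha,H}_{(h_1,h_2);(x_1,x_2)} = \int_{\R^2} \frac{e^{i(x_1\xi_1+x_2\xi_2)}\,(e^{ih_1\xi_1}-1)(e^{ih_2\xi_2}-1)}{\phi_{\alpha,H}(\xi_1,\xi_2)}\, d\widehat{\mathbf{W}}(\boldsymbol\xi).
\]
The kernel here lies in $L^2(\R^2)$ by the same estimates as in Lemma \ref{lem:biendefini}, so this integral is well-defined.

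Next I would apply the isometry $\E[(\int f\, d\widehat{\mathbf{W}})\overline{(\int g\, d\widehat{\mathbf{W}})}] = \int_{\R^2} f\,\overline{g}\, d\boldsymbol\xi$, taking for $f$ and $g$ the kernels of the two increments $\Delta X^{\alpha,H}_{(h_1,h_2);(x_1,x_2)}$ and $\Delta X^{\alpha,H}_{(\ell_1,\ell_2);(y_1,y_2)}$. This yields
\[
\E\big[ \Delta X^{\alpha,H}_{(h_1,h_2);(x_1,x_2)}\overline{\Delta X^{\alpha,H}_{(\ell_1,\ell_2);(y_1,y_2)}}\big] = \int_{\R^2} \frac{e^{i((x_1-y_1)\xi_1+(x_2-y_2)\xi_2)}\,(e^{ih_1\xi_1}-1)(e^{ih_2\xi_2}-1)\overline{(e^{i\ell_1\xi_1}-1)(e^{i\ell_2\xi_2}-1)}}{\big(\phi_{\alpha,H}(\xi_1,\xi_2)\big)^2}\, d\boldsymbol\xi .
\]

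Finally I would take absolute values and bound the integrand pointwise. The phase $e^{i((x_1-y_1)\xi_1+(x_2-y_2)\xi_2)}$ has modulus one, so it drops out and the dependence on $(x_1,x_2),(y_1,y_2)$ disappears; for each of the four remaining factors I use the elementary bound $|e^{it}-1| = 2|\sin(t/2)| \leq \min\{2,|t|\}$, which produces the numerator $\min\{2,|h_1\xi_1|\}\min\{2,|h_2\xi_2|\}\min\{2,|\ell_1\xi_1|\}\min\{2,|\ell_2\xi_2|\}$. Combined with the triangle inequality $|\int \cdot|\le\int|\cdot|$, this is exactly the claimed estimate. The computation is essentially routine; the only points requiring care are the correct form of the isometry for the complex harmonizable integral (the equality of $\E[X\overline{Y}]$ with $\int f\,\overline{g}$) and the structural observation that the tensor form of the kernel makes the rectangular increment factorize cleanly — it is this factorization that yields the separate $h_m$- and $\ell_m$-dependent factors in the bound.
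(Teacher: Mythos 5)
Your proposal is correct and follows essentially the same route as the paper: write the rectangular increment as a single Wiener integral with kernel $e^{i(x_1\xi_1+x_2\xi_2)}(e^{ih_1\xi_1}-1)(e^{ih_2\xi_2}-1)/\phi_{\alpha,H}$, apply the isometry to get the covariance as a deterministic integral, drop the unimodular phase, and bound each factor by $|e^{it}-1|=2|\sin(t/2)|\le\min\{2,|t|\}$. If anything, your write-up is slightly cleaner, since the paper's intermediate display contains a typo ($\max$ where $\min$ is meant) that your version avoids.
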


\begin{proof}
    By definition of $X^{\alpha, H}$, one has
$$
        \Delta  X_{(h_1,h_2); (x_1,x_2)}^{\alpha,H} =  \int_{\mathbb{R}^2} e^{i(x_1 \xi_1 + x_2 \xi_2)}\frac{(e^{i h_1 \xi_1}-1) (e^{i h_2 \xi_2}-1)}{\phi_{\alpha,H}(\xi_1,\xi_2)}  d\hat{\bf{W}}({\boldsymbol{\xi}}) .
$$
The isometry property of the Wiener integral gives then
\begin{align*}
   & \mathbb{E}\big[ \Delta  X^{\alpha,H}_{(h_1,h_2);(x_1,x_2)}\overline{\Delta  X^{\alpha,H}_{(h_1,h_2);(y_1,y_2)}}\big]\\[1.5ex]
   & = \int_{\R^2}\frac{e^{i((x_1-y_1) \xi_1 + (x_2-y_2) \xi_2)}(e^{i h_1 \xi_1}-1) (e^{i h_2 \xi_2}-1)(e^{-i \ell_1 \xi_1}-1) (e^{-i \ell_2 \xi_2}-1)}{\big(\phi_{\alpha,H}(\xi_1,\xi_2)\big)^2 }d\boldsymbol{\xi} .
\end{align*}
It implies that 
\begin{align*}
   & \big|\mathbb{E}\big[ \Delta  X^{\alpha,H}_{(h_1,h_2);(x_1,x_2)}\overline{\Delta  X^{\alpha,H}_{(h_1,h_2);(y_1,y_2)}}\big] \big |\\[1.5ex]
   & \leq  \int_{\R^2}\frac{|e^{i h_1 \xi_1}-1| \,  |e^{i h_2 \xi_2}-1| \, |e^{-i \ell_1 \xi_1}-1| \, |e^{-i \ell_2 \xi_2}-1| }{\big(\phi_{\alpha,H}(\xi_1,\xi_2)\big)^2 }d\boldsymbol{\xi}  \\
   & \leq  \int_{\R^2}\frac{\max \{2, |h_1\xi_1| \} \,  \max \{2, |h_2\xi_2| \} \,  \max \{2, |\ell_1\xi_1| \}  \,  \max \{2, |\ell_2\xi_2| \} | }{\big(\phi_{\alpha,H}(\xi_1,\xi_2)\big)^2 }d\boldsymbol{\xi} 
\end{align*}
by noticing that
$$
|e^{i\xi}-1| = 2 \, |\sin (\frac{\xi}{2}) | \leq \min \{2, |\xi| \}.
$$
    \end{proof}

\begin{lemma}\label{lem:irreg4}
Let us fix $L\in (0,1]$ and let us consider for every $(\bj,\bk) \in \N \times \Z$ the coefficient
$$
d_{\bj,\bk}
= \int_{S_{\bj}} \Delta  X^{\alpha,H}_{(\frac{y_1}{2^{j_1}},\frac{y_2}{2^{j_2}});(\frac{k_1}{2^{j_1}},\frac{k_2}{2^{j_2}})}  \psi(y_1)\psi(y_2) d\boldsymbol{y} 
$$
where
$$
S_{\bj} = \big\{(y_1,y_2) \in \R^2 : |y_1|\geq 2^{j_1}L \text{ or } |y_2| \geq  2^{j_2}L   \big\}.
$$
 For every $N \geq 3$, there exists $C>0$ such that 
$$
\mathbb{E}\big[ |d_{\bj,\bk} |^2\big] \leq C \, 2^{-2N \min \{j_1,j_2 \}}
$$
for all $(\bj,\bk) \in \N \times \Z$.
\end{lemma}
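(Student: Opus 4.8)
The plan is to reduce the second moment of $d_{\bj,\bk}$ to a purely deterministic integral against the Schwartz function $\psi$, and then exploit the rapid decay of $\psi$ forced by the definition of the tail region $S_{\bj}$. The first step is to apply Minkowski's integral inequality in $L^2(\Omega)$ to the defining integral, pulling the $L^2(\Omega)$-norm inside the $\boldsymbol{y}$-integral:
\[ \mathbb{E}\big[|d_{\bj,\bk}|^2\big]^{1/2} \leq \int_{S_{\bj}} \big\| \Delta X^{\alpha,H}_{(\frac{y_1}{2^{j_1}},\frac{y_2}{2^{j_2}});(\frac{k_1}{2^{j_1}},\frac{k_2}{2^{j_2}})} \big\|_{L^2(\Omega)}\, |\psi(y_1)\psi(y_2)|\, d\boldsymbol{y}. \]
The payoff of this step is that the base point $(\frac{k_1}{2^{j_1}},\frac{k_2}{2^{j_2}})$ drops out of the estimate, which is exactly what will make the final constant uniform in $\bk$.

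Next I would bound the $L^2(\Omega)$-norm of each rectangular increment using Proposition \ref{prop:const_increm} with $h_1=\frac{y_1}{2^{j_1}}$ and $h_2=\frac{y_2}{2^{j_2}}$; this is legitimate because that estimate holds for \emph{all} steps, with no smallness restriction, so the (possibly large) values of $|y_i|/2^{j_i}$ on the tail are harmless. I would then simplify the anisotropic factor via the elementary inequality $\max\{a,b\}^{1-\alpha}\min\{a,b\}^{1+\alpha}\leq ab$ (valid for $a,b\geq 0$ and $\alpha\in[0,1]$, since the ratio $(\min/\max)^\alpha\leq 1$), obtaining
\[ \big\| \Delta X^{\alpha,H}_{(\frac{y_1}{2^{j_1}},\frac{y_2}{2^{j_2}});(\cdots)} \big\|_{L^2(\Omega)} \leq c_1^{1/2}\Big(\tfrac{|y_1|\,|y_2|}{2^{j_1+j_2}}\Big)^{H} = c_1^{1/2}\,2^{-H(j_1+j_2)}|y_1|^H|y_2|^H. \]
Thus the whole estimate becomes $2^{-H(j_1+j_2)}$ times a deterministic integral of $|y_1|^H|y_2|^H|\psi(y_1)\psi(y_2)|$ over $S_{\bj}$. (One could instead invoke Lemma \ref{lem:irreg3} with coinciding steps and base points to get the same variance bound, but citing Proposition \ref{prop:const_increm} is cleaner here.)

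The decisive step uses the structure of $S_{\bj}$. Since $S_{\bj}\subseteq\{|y_1|\geq 2^{j_1}L\}\cup\{|y_2|\geq 2^{j_2}L\}$, I would split the integral into these two pieces. On $\{|y_1|\geq 2^{j_1}L\}$ the $y_2$-integral contributes only the finite constant $\int_{\R}|y_2|^H|\psi(y_2)|\,dy_2$, while the Schwartz decay of $\psi$ yields, for every $M$, a bound $\int_{|y_1|\geq 2^{j_1}L}|y_1|^H|\psi(y_1)|\,dy_1\leq C_M\,2^{-Mj_1}$ with $C_M$ independent of $j_1$ (this is where the Schwartz property is essential, and where the arbitrary $N$ comes from). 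This piece is therefore controlled by $2^{-(M+H)j_1}2^{-Hj_2}$, and the symmetric piece by $2^{-Hj_1}2^{-(M+H)j_2}$. A short case check shows both are $\leq C\,2^{-N\min\{j_1,j_2\}}$ as soon as $M\geq N$: if $\min=j_1$ the coefficient $(M+H)$ on $j_1$ already exceeds $N$, while if $\min=j_2$ one uses $j_1\geq j_2$ to bound the coefficient of $j_1$ from below by $(M+H)\min$. Squaring gives $\mathbb{E}[|d_{\bj,\bk}|^2]\leq C\,2^{-2N\min\{j_1,j_2\}}$ with $C$ uniform in $(\bj,\bk)$.

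I expect the only genuinely delicate point to be the justification of Minkowski's integral inequality for the $L^2(\Omega)$-valued integral defining $d_{\bj,\bk}$, i.e.\ the joint measurability of $(\boldsymbol{y},\omega)\mapsto \Delta X^{\alpha,H}_{\cdots}(\omega)$ and the integrability needed for the vector-valued Minkowski/Fubini argument; everything after that is a routine tail estimate of a Schwartz function. The conceptual heart is simply that membership in $S_{\bj}$ forces at least one coordinate into the Schwartz tail, producing arbitrarily fast decay in the corresponding scale, while the crude variance bound supplies the factor $2^{-H(j_1+j_2)}$ that converts this into the required decay in $\min\{j_1,j_2\}$.
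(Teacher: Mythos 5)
Your proof is correct, and it takes a genuinely different---and more elementary---route than the paper's. The paper never pulls the $L^2(\Omega)$-norm inside the $\boldsymbol{y}$-integral: it expands $\mathbb{E}\big[|d_{\bj,\bk}|^2\big]$ as a double integral over $S_{\bj}\times S_{\bj}$ via Fubini, bounds the covariance kernel by the spectral estimate of Lemma \ref{lem:irreg3}, factors out the frequency integral using $\min\{2,|w\xi|\}\le \min\{2,|\xi|\}\max\{1,|w|\}$ (finiteness of the resulting constant being Lemma \ref{lem:biendefini}), and is then left with the square of a deterministic integral of $\max\{1,|u_1|\}\max\{1,|u_2|\}\,|\psi(2^{j_1}u_1)\psi(2^{j_2}u_2)|$ over $S_{(0,0)}$, which it splits into three regions $S^{(1)},S^{(2)},S^{(3)}$ and estimates through the fast decay $|\psi(t)|\le C_{2N}(1+|t|)^{-2N}$. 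Your Minkowski-plus-variance argument short-circuits all of this: Proposition \ref{prop:const_increm} combined with the elementary bound $\max\{a,b\}^{1-\alpha}\min\{a,b\}^{1+\alpha}\le ab$ (which harmlessly discards the anisotropy, since only crude polynomial growth in $|y_1|,|y_2|$ must be absorbed by the Schwartz tails) replaces the whole spectral computation, and your two-piece union split of $S_{\bj}$ replaces the paper's three-region decomposition; your case check yielding $2^{-2N\min\{j_1,j_2\}}$ is sound, and your tail estimate $\int_{|y|\ge 2^{j}L}|y|^H|\psi(y)|\,dy\le C_M 2^{-Mj}$ is correct with $C_M$ depending only on the fixed $L$ and $M$. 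What your route buys: fewer moving parts (Lemma \ref{lem:irreg3} is not needed at all, since only variances, not covariances at distinct base points, enter), constants manifestly uniform in $\bk$, and no use of the hypothesis $N\ge 3$ (which in the paper ensures convergence of integrals of the form $\int_{\R}\max\{1,|t|\}(1+|t|)^{-N}\,dt$). What the paper's route buys: the Fubini--Tonelli step on $\Omega\times S_{\bj}\times S_{\bj}$ with nonnegative majorants needs only Tonelli and joint measurability, whereas your argument additionally requires interpreting $d_{\bj,\bk}$ as a Bochner integral in $L^2(\Omega)$ to invoke the vector-valued triangle inequality---exactly the delicate point you flag; note, however, that the finiteness of your majorant $\int_{S_{\bj}}2^{-H(j_1+j_2)}|y_1|^H|y_2|^H|\psi(y_1)\psi(y_2)|\,d\boldsymbol{y}$ is precisely what makes that Bochner integral legitimate, so the gap is closable by the very estimate you prove.
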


\begin{proof}
For the first part, note that Lemma  \ref{lem:irreg3} and Fubini theorem give
\begin{align*}
   & \mathbb{E}\big[ |d_{\bj,\bk} |^2\big] \\ 
    & \leq  \int_{ S_{\bj}} \int_{ S_{\bj}} \left|\mathbb{E}\left[\Delta X^{\alpha,H}_{(\frac{y_1}{2^{j_1}},\frac{y_2}{2^{j_2}});(\frac{k_1}{2^{j_1}},\frac{k_2}{2^{j_2}})} \overline{ \Delta X^{\alpha,H}_{(\frac{x_1}{2^{j_1}},\frac{x_2}{2^{j_2}});(\frac{k_1}{2^{j_1}},\frac{k_2}{2^{j_2}})}} \right]\right| \, \big|\psi(y_1)\psi(y_2)\overline{ \psi(x_1)\psi(x_2)}\big|d\boldsymbol{y} d\boldsymbol{x}\\[2ex]
     & \leq \int_{\R^2} \!\int_{ S_{\bj}} \!\int_{ S_{\bj}}\!\! \frac{\min \{2, |\frac{y_1}{2^{j_1}}\xi_1| \}   \min \{2, |\frac{y_2}{2^{j_2}}\xi_2| \}   \min \{2, |\frac{x_1}{2^{j_1}}\xi_1| \}    \min \{2, |\frac{x_2}{2^{j_2}}\xi_2| \} | }{\big(\phi_{\alpha,H}(\xi_1,\xi_2)\big)^2 }\\
    & \hspace{7cm} \times \big|\psi(y_1)\psi(y_2)\overline{ \psi(x_1)\psi(x_2)}\big|d\boldsymbol{y} d\boldsymbol{x} d\boldsymbol{\xi}\\[2ex]
      & \leq 2^{2(j_1+j_2)} \int_{\R^2}\int_{ S} \int_{ S}\frac{\min \{2, |u_1\xi_1| \}   \min \{2, |u_2\xi_2| \}   \min \{2, |v_1\xi_1| \}    \min \{2, |v_2\xi_2| \} | }{\big(\phi_{\alpha,H}(\xi_1,\xi_2)\big)^2 }\\
    & \hspace{7cm} \times \big|\psi(2^{j_1}u_1)\psi(2^{j_2}u_2)\overline{ \psi(2^{j_1}v_1)\psi(2^{j_2}v_2)}\big|d\boldsymbol{u}d\boldsymbol{v} d\boldsymbol{\xi}  \\[2ex]
    & \leq C 2^{2(j_1+j_2)}  \int_{ S} \int_{ S}\max \{1, |u_1| \}   \max \{1, |u_2| \}   \max \{1, |v_1| \}    \max \{1, |v_2| \} |  \\
    & \hspace{7cm} \times \big|\psi(2^{j_1}u_1)\psi(2^{j_2}u_2)\overline{ \psi(2^{j_1}v_1)\psi(2^{j_2}v_2)}\big|d\boldsymbol{u}d\boldsymbol{v} \\[2ex]
    & = C 2^{2(j_1+j_2)}  \left(\int_{ S} \max \{1, |u_1| \} \max \{1, |u_2| \} \big|\psi(2^{j_1}u_1)\psi(2^{j_2}u_2)| d\boldsymbol{u}\right)^2  
\end{align*}
where  $S= S_{(0,0)}$, 
$$
C=\int_{\R^2} \frac{\min \{2, |\xi_1| \}   \min \{2, |\xi_2| \}   \min \{2, |\xi_1| \}    \min \{2, |\xi_2| \}  }{\big(\phi_{\alpha,H}(\xi_1,\xi_2)\big)^2 }d\boldsymbol{\xi} , 
$$
and by using the change of variables $u_1 = 2^{-j_1}y_1$, $u_2= 2^{-j_2}y_2$, $v_1 = 2^{-j_1}x_1$, $v_2= 2^{-j_2}x_2$ and the relation
$$
\min\{ 2, |w \xi|\} \le \min\{ 2, |\xi|\} \max\{ 1, |w |\} 
$$
for all $w, \xi \in \R$.  Note that the constant $C$ is finite by Lemma \ref{lem:biendefini}. We will now decompose the integral over $S$ in three parts, corresponding to the sets
\begin{align*}
S^{(1)} & = \big\{(u_1,u_2) \in \R^2 : |u_1|\geq L \text{ and } |u_2| \geq  L   \big\}\\
S^{(2)} & = \big\{(u_1,u_2) \in \R^2 : |u_1|\geq L \text{ and } |u_2| <  L   \big\}\\
S^{(3)} & = \big\{(u_1,u_2) \in \R^2 : |u_1|< L \text{ and } |u_2| \geq  L   \big\}\\
\end{align*}
and give an upper bound for each term by using the fast decay of the wavelet
$$
|\psi(t) | \leq \frac{C_{2N}}{(1+|t|)^{2N}} \quad \forall t \in \R. 
$$
First, one has
\begin{eqnarray*}
&& \int_{ S^{(1)}} \max \{1, |u_1| \} \max \{1, |u_2| \} \big|\psi(2^{j_1}u_1)\psi(2^{j_2}u_2)| d\boldsymbol{u}\\
&\leq & \int_{ S^{(1)}}  \max \{1, |u_1| \} \max \{1, |u_2| \}\frac{C^2_{2N}}{(1+|2^{j_1}u_1|)^{2N}(1+|2^{j_2}u_2|)^{2N}}  d\boldsymbol{u}\\[1ex]
&\leq & C^2_{2N} 2^{-N(j_1+j_2)}\int_{ S^{(1)}} \frac{ \max \{1, |u_1| \} \max \{1, |u_2| \}}{(1+|2^{j_1}u_1|)^{N}(1+|2^{j_2}u_2|)^{N}}  d\boldsymbol{u}\\[1ex]
& \leq & C^2_{2N} 2^{-N(j_1+j_2)}2^{-(j_1+j_2)}\int_{|t_1| \geq 2^{j_1}L} \frac{\max\{ 1, 2^{-j_1}|t_1|\}}{(1+|t_1|)^{N}} dt_1\int_{|t_2| \geq 2^{j_2}L}  \frac{\max\{ 1, 2^{-j_2}|t_2|\}}{(1+|t_2|)^{N}} dt_2\\
& \leq & C^2_{2N} 2^{-N(j_1+j_2)}2^{-(j_1+j_2)}\left(\int_{\R} \frac{\max\{ 1, |t|\}}{(1+|t|)^{N}} dt \right)^2
\end{eqnarray*}
by using the change of variable $t_1=2^{j_1u_1}$, $t_2=2^{j_2}t_2$. For the second term, we use as before the fast decay of the wavelet for the part corresponding to $|u_1|\geq L$ while for the part corresponding to $|u_2| <  L  $, we use the fact that $ \max \{1, |u_2| \}  = 1$ since $L\leq 1$. More precisely, we obtain
\begin{eqnarray*}
  && \int_{ S^{(2)}} \max \{1, |u_1| \} \max \{1, |u_2| \} \big|\psi(2^{j_1}u_1)\psi(2^{j_2}u_2)| d\boldsymbol{u}\\  
  &\leq & \left(\int_{ |u_1|\geq L}  \max \{1, |u_1| \} \frac{C_{2N}}{(1+|2^{j_1}u_1|)^{2N}} du_1 \right)\left(\int_{ |u_2|< L} \big|\psi(2^{j_2}u_2)| du_2 \right)\\[1ex]
  & \leq & C^2_{2N} 2^{-Nj_1}2^{-j_1} \left(\int_{\R} \frac{\max\{ 1, |t_1|\}}{(1+|t_1|)^{N}} dt_1 \right) 2^{-j_2}\left(\int_{ \R} \big|\psi(t_2)| dt_2 \right).
\end{eqnarray*}
We proceed in the same way for the integral over $S^{(3)}$,  by switching the roles of $j_1$ and $j_2$. Putting everything together, it follows that there exists a constant $C'>0$ such that 
\begin{eqnarray*}
 \left|\mathbb{E}\big[ |d_{\bj,\bk} |^2\big]\right|
  & \leq & C 2^{2(j_1+j_2)}  \left(\sum_{m=1}^3\int_{ S^{(m)}} \max \{1, |u_1| \} \max \{1, |u_2| \} \big|\psi(2^{j_1}u_1)\psi(2^{j_2}u_2)| d\boldsymbol{u}\right)^2  \\
  & \leq & C' 2^{2(j_1+j_2)} \left(2^{-N(j_1+j_2)}2^{-(j_1+j_2)} + 2^{-Nj_1}2^{-(j_1+j_2)} + 2^{-Nj_2}2^{-(j_1+j_2)}\right)^2 \\
 & \leq & 9 C' 2^{-2 N \min \{j_1,j_2 \}},
   \end{eqnarray*}
   which gives the conclusion.
\end{proof}

A direct consequence of the previous Lemma is that, almost surely, for every $(\bj,\bk)$, the wavelet coefficient $c_{\bj,\bk}$ given in Equation \eqref{WC} is well-defined. Indeed, by using the vanishing moment of the wavelet and the change of variables $y_1=2^{j_1}x_1-k_1$, $y_2= 2^{j_2}x_2-k_2$, we can write
\begin{align}\label{WC2}
    c_{\bj,\bk}& 
    =  \int_{\R^2} X^{\alpha,H}_{(\frac{y_1+k_1}{2^{j_1}},\frac{y_2+k_2}{2^{j_2}})} \psi(y_1)\psi(y_2)d\boldsymbol{y} \nonumber\\
    & = \int_{\R^2} \Delta  X^{\alpha,H}_{(\frac{y_1}{2^{j_1}},\frac{y_2}{2^{j_2}});(\frac{k_1}{2^{j_1}},\frac{k_2}{2^{j_2}})}  \psi(y_1)\psi(y_2) d\boldsymbol{y} \nonumber \\
    & = \int_{R_{\bj}} \Delta  X^{\alpha,H}_{(\frac{y_1}{2^{j_1}},\frac{y_2}{2^{j_2}});(\frac{k_1}{2^{j_1}},\frac{k_2}{2^{j_2}})}  \psi(y_1)\psi(y_2) d\boldsymbol{y}
    +\int_{S_{\bj}} \Delta  X^{\alpha,H}_{(\frac{y_1}{2^{j_1}},\frac{y_2}{2^{j_2}});(\frac{k_1}{2^{j_1}},\frac{k_2}{2^{j_2}})}  \psi(y_1)\psi(y_2) d\boldsymbol{y}
\end{align}
where $R_{\bj}= \R^2 \setminus S_{\bj}$. The first integral is almost surely well-defined thanks to Proposition \ref{regularity}, while the second integral is almost surely finite thanks to Lemma \ref{lem:irreg4}.

\begin{lemma}\label{lem:irreg1}
Almost surely, for every  $(\bj,\bk)$, one has
$$ c_{\bj,\bk} = \int_{\R^2}\frac{e^{i(k_12^{-j_1}\xi_1 + k_22^{-j_2}\xi_2)}\widehat{\psi} (2^{-j_1}\xi_1)\widehat{\psi} (2^{-j_2}\xi_2)}{\phi_{\alpha,H}(\xi_1,\xi_2)}  d\hat{\bf{W}}({\boldsymbol{\xi}}).$$
In particular,
\begin{enumerate}
    \item if $\|\bj-\bj'\|_{\infty} >1$, the wavelet coefficients $c_{\bj,\bk}$ and $c_{\bj',\bk'}$ are independent,
    \item there exist two constants $c,d>0$ such that 
    $$
   c \, 2^{-2 (\max (\bj) H_\alpha^+ + \min (\bj) H_\alpha^-  )} \leq \E[\, |c_{\bj,\bk}|^2\, ] \leq d \, 2^{-2 (\max (\bj) H_\alpha^+ + \min (\bj) H_\alpha^-  )}
    $$
    for all $\bj,\bk$. 
\end{enumerate}
\end{lemma}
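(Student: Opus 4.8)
The plan is to first derive the exact spectral (harmonizable) representation of $c_{\bj,\bk}$, and then read off both consequences from the isometry property of the Wiener integral together with the frequency localization of the Lemari\'e--Meyer wavelet encoded in \eqref{eq:support}.

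For the representation, I would start from the rewriting \eqref{WC2}, namely $c_{\bj,\bk}=\int_{\R^2}\Delta X^{\alpha,H}_{(2^{-j_1}y_1,2^{-j_2}y_2);(2^{-j_1}k_1,2^{-j_2}k_2)}\psi(y_1)\psi(y_2)\,d\boldsymbol{y}$, which is almost surely well defined by Proposition \ref{regularity} and Lemma \ref{lem:irreg4}. Inserting the harmonizable representation of the rectangular increment obtained in the proof of Lemma \ref{lem:irreg3} and exchanging the deterministic integral in $\boldsymbol{y}$ with the stochastic integral in $\boldsymbol{\xi}$ (a stochastic Fubini theorem, justified by the $L^2(\R^2\times\R^2)$ integrability of the joint kernel, itself a consequence of Lemma \ref{lem:biendefini} and the fast decay of $\psi$), the integrand factorizes in the two coordinates. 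In each coordinate one is left with $\int_{\R}(e^{i2^{-j_m}y_m\xi_m}-1)\psi(y_m)\,dy_m$: the constant $-1$ contributes $-\int_\R\psi=0$ by the vanishing zeroth moment of $\psi$, while the exponential yields $\widehat{\psi}(2^{-j_m}\xi_m)$. This produces exactly the announced formula (up to the Fourier sign convention, which is irrelevant below since only $|\widehat\psi|$ and $\mathrm{supp}\,\widehat\psi$ will matter).

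With the representation in hand, the isometry gives, for any two indices,
\[ \E[c_{\bj,\bk}\,\overline{c_{\bj',\bk'}}]=\int_{\R^2}\frac{e^{i(\dots)}\,\widehat\psi(2^{-j_1}\xi_1)\overline{\widehat\psi(2^{-j_1'}\xi_1)}\,\widehat\psi(2^{-j_2}\xi_2)\overline{\widehat\psi(2^{-j_2'}\xi_2)}}{\big(\phi_{\alpha,H}(\xi_1,\xi_2)\big)^2}\,d\boldsymbol{\xi}. \]
For point (1), note that $\widehat\psi(2^{-j}\cdot)$ is supported, in modulus, on the band $|\xi|\in[2^j\tfrac{2\pi}{3},2^j\tfrac{8\pi}{3}]$ by \eqref{eq:support}; since the ratio of endpoints is $4=2^2$, the bands at scales $j$ and $j'$ are disjoint up to a null set as soon as $|j-j'|\geq 2$. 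Thus $\|\bj-\bj'\|_\infty>1$ forces either $\widehat\psi(2^{-j_1}\xi_1)\overline{\widehat\psi(2^{-j_1'}\xi_1)}$ or $\widehat\psi(2^{-j_2}\xi_2)\overline{\widehat\psi(2^{-j_2'}\xi_2)}$ to vanish almost everywhere, so the covariance is $0$. As the $c_{\bj,\bk}$ are real, centered and jointly Gaussian, vanishing covariance is equivalent to independence. For point (2), I take $\bj=\bj'$, $\bk=\bk'$, so that the unimodular phase disappears and $\E[|c_{\bj,\bk}|^2]=\int_{\R^2}|\widehat\psi(2^{-j_1}\xi_1)|^2|\widehat\psi(2^{-j_2}\xi_2)|^2\phi_{\alpha,H}(\xi_1,\xi_2)^{-2}\,d\boldsymbol{\xi}$, manifestly independent of $\bk$. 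After the change of variables $\eta_m=2^{-j_m}\xi_m$ this becomes $2^{j_1+j_2}\int_{\R^2}|\widehat\psi(\eta_1)|^2|\widehat\psi(\eta_2)|^2\,\phi_{\alpha,H}(2^{j_1}\eta_1,2^{j_2}\eta_2)^{-2}\,d\boldsymbol{\eta}$. On the support $|\eta_m|\in[\tfrac{2\pi}{3},\tfrac{8\pi}{3}]$ one has $|\xi_m|\asymp 2^{j_m}$, hence $\min(|\xi_1|,|\xi_2|)\asymp 2^{\min(\bj)}$ and $\max(|\xi_1|,|\xi_2|)\asymp 2^{\max(\bj)}$, giving $\phi_{\alpha,H}(2^{j_1}\eta_1,2^{j_2}\eta_2)^2\asymp 2^{\min(\bj)(2H_\alpha^-+1)+\max(\bj)(2H_\alpha^++1)}$; multiplying by $2^{j_1+j_2}=2^{\min(\bj)+\max(\bj)}$ and by the finite positive constant $\int|\widehat\psi(\eta_1)|^2|\widehat\psi(\eta_2)|^2\,d\boldsymbol{\eta}$ yields $\E[|c_{\bj,\bk}|^2]\asymp 2^{-2(\max(\bj)H_\alpha^++\min(\bj)H_\alpha^-)}$.

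The main obstacle is the bookkeeping around $\mathrm{supp}\,\widehat\psi$: the same octave width (factor $2^2$) must simultaneously produce disjoint bands for (1) and matching two-sided bounds on $\phi_{\alpha,H}$ for (2), and one must check that $\min(|\xi_1|,|\xi_2|)\asymp 2^{\min(\bj)}$, $\max(|\xi_1|,|\xi_2|)\asymp 2^{\max(\bj)}$ survive even when $j_1,j_2$ are close (so the minimum/maximum need not be attained by a fixed coordinate); this holds because the bounds $|\xi_m|\asymp 2^{j_m}$ alone already pin down the order of both quantities. A secondary, more routine point is the rigorous justification of the stochastic Fubini step used to obtain the representation.
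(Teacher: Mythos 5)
Your proposal is correct, and its first half (the spectral representation via \eqref{WC2}, stochastic Fubini, the vanishing zeroth moment killing the $-1$ term, and the support-disjointness argument for part (1) including the Gaussianity step) is essentially identical to the paper's proof. Where you genuinely diverge is part (2): the paper computes $\E[|c_{\bj,\bk}|^2]$ \emph{exactly} by a case analysis --- for $|j_1-j_2|>1$ the min/max in $\phi_{\alpha,H}$ resolves identically on $\mathrm{supp}\,\widehat{\psi}\times\mathrm{supp}\,\widehat{\psi}$ and the integral factorizes into an explicit constant $c_1$ times the power of $2$, while for $j_1=j_2$ and $j_1=j_2\pm 1$ the homogeneity of $\phi_{\alpha,H}$ under simultaneous scaling yields three further explicit constants $c_2,c_3,c_4$, and one takes $c=\min$, $d=\max$ of these. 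You instead obtain the two-sided bound in one stroke from $|\xi_m|\in[\tfrac{2\pi}{3}2^{j_m},\tfrac{8\pi}{3}2^{j_m}]$ on the support, using monotonicity of $\min$ and $\max$ in each argument to get $\min(|\xi_1|,|\xi_2|)\asymp 2^{\min(\bj)}$, $\max(|\xi_1|,|\xi_2|)\asymp 2^{\max(\bj)}$ uniformly in $\bj$ (including when $j_1,j_2$ are close, as you correctly note), and positivity of the exponents $2H_\alpha^\pm+1$. Your route is more elementary and avoids the case split entirely, at the cost of losing the exact constants the paper's computation produces (which are not needed for the statement); both are valid. One small caveat: your justification of the stochastic Fubini step via ``$L^2(\R^2\times\R^2)$ integrability of the joint kernel'' is not quite the standard hypothesis when the $\boldsymbol{y}$-variable ranges over a space of infinite measure --- the usual sufficient condition (as in the lemma of Pipiras--Taqqu the paper cites) is a mixed bound of the type $\int_{\R^2}\bigl(\int_{\R^2}|f(\boldsymbol{y},\boldsymbol{\xi})|^2\,d\boldsymbol{\xi}\bigr)^{1/2}d\boldsymbol{y}<\infty$, which here does hold thanks to the $\min\{2,|h\xi|\}$ estimates of Lemma \ref{lem:irreg3} and the fast decay of $\psi$; this is a misstated hypothesis rather than a gap in the argument.
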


\begin{proof}
We have
\begin{align*}
    c_{\bj,\bk}
    & = \int_{\R^2} \Delta  X^{\alpha,H}_{(\frac{y_1}{2^{j_1}},\frac{y_2}{2^{j_2}});(\frac{k_1}{2^{j_1}},\frac{k_2}{2^{j_2}})}  \psi(y_1)\psi(y_2) d\boldsymbol{y}\\[1ex]
    & = \int_{\R^2}\int_{\mathbb{R}^2}  \frac{(e^{i (y_1+k_1)2^{-j_1} \xi_1}-e^{i k_1 2^{-j_1} \xi_1}) (e^{i (y_2+k_2)2^{-j_2} \xi_2}-e^{i k_2 2^{-j_2} \xi_2})}{\phi_{\alpha,H}(\xi_1,\xi_2)} \psi(y_1)\psi(y_2) d\hat{\bf{W}}({\boldsymbol{\xi}})  d\boldsymbol{y} \\[1ex]
    & = \int_{\R^2}\frac{e^{i(k_12^{-j_1}\xi_1 + k_22^{-j_2}\xi_2)}}{\phi_{\alpha,H}(\xi_1,\xi_2)}\left(\int_{\mathbb{R}}  (e^{i y_12^{-j_1} \xi_1}-1) \psi(y_1) dy_1 \right) \left(\int_{\R }(e^{i y_2 2^{-j_2} \xi_2}-1) \psi(y_2)   dy_2\right)  d\hat{\bf{W}}({\boldsymbol{\xi}}) \\[1ex]
    & = \int_{\R^2}\frac{e^{i(k_12^{-j_1}\xi_1 + k_22^{-j_2}\xi_2)}\widehat{\psi} (2^{-j_1}\xi_1)\widehat{\psi} (2^{-j_2}\xi_2)}{\phi_{\alpha,H}(\xi_1,\xi_2)}  d\hat{\bf{W}}({\boldsymbol{\xi}}) 
\end{align*}
using a Fubini type argument for Wiener integral, see e.g. \cite[Lemma 2.10.]{MR1617045}. 
It gives the first part of the lemma. For the second part,  the isometry property of the Wiener integral gives 
\begin{align*}
    \mathbb{E}\big[c_{\bj,\bk} \overline{c_{\bj',\bk'}}\big]
    & = \int_{\R^2 }\frac{e^{i(k_12^{-j_1}-k'_12^{-j'_1})\xi_1} e^{i(k_22^{-j_2}-k'_22^{-j'_2})\xi_2}\widehat{\psi} (2^{-j_1}\xi_1)\overline{\widehat{\psi}} (2^{-j'_1}\xi_1) \widehat{\psi} (2^{-j_2}\xi_2)\overline{\widehat{\psi}} (2^{-j'_2}\xi_2)}{\big(\phi_{\alpha,H}(\xi_1,\xi_2)\big)^2}  d\boldsymbol{\xi} . 
\end{align*}
Assume now for example that $j_1 - j'_1>1$.  Using the localization of $\hat{\psi}$ given in Equation \eqref{eq:support},  if $2^{-j_1} \xi_1 \in \textrm{supp } \widehat{\psi}$, then we have
$$
|\xi_1|2^{-j'_1} = 2^{-j_1}|\xi_1 | 2^{j_1-j'_1} \geq \frac{2\pi}{3} 2^{j_1-j'_1} \geq \frac{8\pi}{3}.
$$
Consequently, we obtain $\widehat{\psi} (2^{-j_1}\xi_1) \widehat{\psi} (2^{-j'_1}\xi_1)=0$ for all $\xi \in \R$ and it follows that the coefficients $c_{\bj,\bk}$ and ${c_{\bj',\bk'}}$ are independent, thanks to the Gaussianity of the wavelet coefficients. The other cases are treated in the same way.

Finally, one also has
$$
     \mathbb{E}\big[|c_{\bj,\bk} |^2\big]
     = \int_{\R^2 }\frac{|\widehat{\psi} (2^{-j_1}\xi_1)|^2 |\widehat{\psi} (2^{-j_2}\xi_2)|^2}{\big(\phi_{\alpha,H}(\xi_1,\xi_2)\big)^2}  d\boldsymbol{\xi}
     = 2^{j_1+j_2}\int_{\R^2 }\frac{|\widehat{\psi} (\eta_1)|^2 |\widehat{\psi} (\eta_2)|^2}{\big(\phi_{\alpha,H}(2^{j_1}\eta_1,2^{j_2}\eta_2)\big)^2}  d\boldsymbol{\eta}
$$
by using the change of variables $\eta_1 = 2^{-j_1} \xi_1$, $\eta_2 = 2^{-j_2} \xi_2$. Assume now that $j_1 - j_2>1$. Then, if $\eta_1 , \eta_2 \in \textrm{supp } \widehat{\psi}$, one has 
$$
|2^{j_1} \eta_1| \geq 2^{j_1} \frac{2\pi}{3} \geq 2^{j_2}\frac{8\pi}{3}  \geq |2^{j_2}\eta_2|
$$
hence
$$
\big(\phi_{\alpha,H}(2^{j_1}\eta_1,2^{j_2}\eta_2)\big)^2= \frac{2^{-j_1(2 H_\alpha^+ +1)}2^{-j_2 (2H_\alpha^- +1) }}{|\eta_2|^{2H_\alpha^- +1} |\eta_1 |^{2H_\alpha^++1 }},
$$
see \eqref{eq:H+H-} for the definition of $H_\alpha^+$ and $H_\alpha^-$. 
We obtain directly that 
$$
\mathbb{E}\big[|c_{\bj,\bk} |^2\big] 
= 2^{-2 j_1 H_\alpha^+ - 2j_2 H_\alpha^-  }\int_{\R }\frac{|\widehat{\psi} (\eta)|^2 }{|\eta |^{2H_\alpha^++1 }} d\eta\int_{\R }\frac{|\widehat{\psi} (\eta)|^2 }{|\eta |^{2H_\alpha^-+1 }} d\eta.
$$
A similar argument for $j_2- j_1>1$ gives
$$
\mathbb{E}\big[|c_{\bj,\bk} |^2\big] = c_1 2^{-2 (\max (\bj) H_\alpha^+ + \min (\bj) H_\alpha^-  )} \quad \text{ if } |j_1-j_2| >1 
$$
for 
$$c_1= \int_{\R }\frac{|\widehat{\psi} (\eta)|^2 }{|\eta |^{2H_\alpha^++1 }} d\eta\int_{\R }\frac{|\widehat{\psi} (\eta)|^2 }{|\eta |^{2H_\alpha^-+1 }} d\eta. $$
It remains to treat the case where $|j_1-j_2| \leq 1$. First, assume that $j_1=j_2$. Then 
\begin{align*}
     \mathbb{E}\big[|c_{\bj,\bk} |^2\big]
    & = 2^{2j_1}\int_{\R^2 }\frac{|\widehat{\psi} (\eta_1)|^2 |\widehat{\psi} (\eta_2)|^2}{\big(\phi_{\alpha,H}(2^{j_1}\eta_1,2^{j_1}\eta_2)\big)^2}  d\eta_1d\eta_2 \\
    & = 2^{-2(j_1 H^+_\alpha + j_1 H^-_\alpha)} \int_{\R^2} \frac{|\widehat{\psi} (\eta_1)|^2 |\widehat{\psi} (\eta_2)|^2}{\big(\phi_{\alpha,H}(\eta_1,\eta_2)\big)^2}  d\eta_1d\eta_2\\
    & = c_2 2^{-2 (\max (\bj) H_\alpha^+ + \min (\bj) H_\alpha^-  )}
\end{align*}
with
$$
c_2=\int_{\R^2} \frac{|\widehat{\psi} (\eta_1)|^2 |\widehat{\psi} (\eta_2)|^2}{\big(\phi_{\alpha,H}(\eta_1,\eta_2)\big)^2}  d\eta_1d\eta_2. 
$$
The cases $j_1=j_2\pm 1$ are treated similarly  with the two constants
$$
c_3 =\int_{\R^2} \frac{|\widehat{\psi} (\eta_1)|^2 |\widehat{\psi} (\eta_2)|^2}{\big(\phi_{\alpha,H}(2\eta_1,\eta_2)\big)^2}  d\eta_1d\eta_2 
\quad \text{and} \quad 
c_4= \int_{\R^2} \frac{|\widehat{\psi} (\eta_1)|^2 |\widehat{\psi} (\eta_2)|^2}{\big(\phi_{\alpha,H}(\eta_1,2\eta_2)\big)^2}  d\eta_1d\eta_2.
$$
It suffices then to take $c= \min \{ c_1,c_2,c_3,c_4\}$ and $d= \max \{ c_1,c_2,c_3,c_4\}$. 
\end{proof}

The following final  classical lemma provides the asymptotic behavior of a sequence of (independent) standard Gaussian random variables. It is a direct consequence of the Borel-Cantelli lemma together with the classical estimate of the tail behavior of a standard Gaussian random variable $Z$, given by
$$
\lim_{x \to + \infty} \frac{\PP(|Z|>\varepsilon)}{x^{-1} e^{-x^2/2}} = \sqrt{\frac{2}{\pi}}. 
$$

\begin{lemma}\label{lem:irreg2}
Assume that $(Z_n)_{n \in \N}$ is a sequence of $\mathcal{N}(0,1)$ random variables.
\begin{enumerate}
    \item Almost surely, one has 
    $$
    \limsup_{n \to + \infty} \frac{|Z_n|}{ \sqrt{\log(n+2)}} < + \infty . 
    $$
    \item If the random variables $Z_n$, $n \in \N$, are independent, then one has almost surely
    $$
    \limsup_{n \to + \infty} \frac{|Z_n|}{ \sqrt{\log(n+2)}} >0. 
    $$
\end{enumerate}
\end{lemma}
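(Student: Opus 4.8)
The plan is to treat both statements simultaneously through the Borel--Cantelli lemma applied to the threshold events
$$A_n(\lambda) := \left\{ |Z_n| > \lambda \sqrt{\log(n+2)} \right\}, \qquad \lambda > 0.$$
The key computation, which governs everything, is an estimate of $\PP(A_n(\lambda))$. Substituting $x = \lambda\sqrt{\log(n+2)}$ into the stated tail asymptotics $\PP(|Z|>x) \sim \sqrt{2/\pi}\, x^{-1} e^{-x^2/2}$, one finds
$$\PP(A_n(\lambda)) \sim \sqrt{\frac{2}{\pi}} \frac{1}{\lambda \sqrt{\log(n+2)}} (n+2)^{-\lambda^2/2}$$
as $n \to \infty$. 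Thus $\PP(A_n(\lambda))$ behaves like $(n+2)^{-\lambda^2/2}$ up to the slowly varying factor $1/\sqrt{\log(n+2)}$, so that $\sum_n \PP(A_n(\lambda))$ converges precisely when $\lambda^2/2 > 1$, i.e. $\lambda > \sqrt 2$, and diverges when $\lambda \le \sqrt 2$; the critical exponent $\lambda = \sqrt 2$ separates the two regimes. To make this rigorous I would first extract from the asymptotic equivalence a two-sided bound valid for all large $n$, of the form $c_\lambda (n+2)^{-\lambda^2/2}/\sqrt{\log(n+2)} \le \PP(A_n(\lambda)) \le C_\lambda (n+2)^{-\lambda^2/2}/\sqrt{\log(n+2)}$ for suitable positive constants.

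For the first assertion I would fix any $\lambda > \sqrt 2$ (for instance $\lambda = 2$). The upper bound then shows $\sum_n \PP(A_n(\lambda)) < +\infty$, and the first Borel--Cantelli lemma gives that, almost surely, only finitely many $A_n(\lambda)$ occur. Hence almost surely $|Z_n| \le \lambda \sqrt{\log(n+2)}$ for all sufficiently large $n$, which is exactly $\limsup_n |Z_n|/\sqrt{\log(n+2)} \le \lambda < +\infty$. This direction uses only the first half of Borel--Cantelli and therefore requires no independence of the $Z_n$.

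For the second assertion I would fix any $\lambda < \sqrt 2$ (for instance $\lambda = 1$). The lower bound then gives $\sum_n \PP(A_n(\lambda)) = +\infty$, and since the $Z_n$ are now assumed independent, so are the events $A_n(\lambda)$; the second Borel--Cantelli lemma yields that, almost surely, $A_n(\lambda)$ occurs for infinitely many $n$, whence $\limsup_n |Z_n|/\sqrt{\log(n+2)} \ge \lambda > 0$ almost surely. The main point to watch is that the two assertions draw on the two different halves of Borel--Cantelli, so that independence is genuinely needed only for the lower bound; the only real care in the argument is converting the asymptotic equivalence of the Gaussian tail into the clean summability/divergence dichotomy at $\lambda = \sqrt 2$, noting that the slowly varying factor $1/\sqrt{\log(n+2)}$ is harmless since $\sum_n 1/\big((n+2)\sqrt{\log(n+2)}\big)$ still diverges at the boundary.
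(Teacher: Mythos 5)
Your proposal is correct and follows exactly the route the paper indicates: the paper derives the lemma as a direct consequence of the Borel--Cantelli lemma combined with the classical Gaussian tail estimate $\PP(|Z|>x)\sim\sqrt{2/\pi}\,x^{-1}e^{-x^2/2}$, which is precisely your summability/divergence dichotomy at $\lambda=\sqrt{2}$, with the first half of Borel--Cantelli (no independence) for the upper bound and the second half (using independence) for the lower bound. Your additional care in converting the asymptotic equivalence into two-sided bounds, and your remark on the boundary case, are sound but go slightly beyond what the paper's sketch requires.
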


We are now ready to prove the main result of this section.

\begin{proof}[Proof of Theorem \ref{thm:irreg}]
Fix a scale $J$ large enough so that  there is $(K_1,K_2)\in \Z^2 $ satisfying
$$
\frac{K_1}{2^J} \in I^\circ \quad \text{and} \quad  \frac{K_2}{2^{J+2}} \in J^{\circ} .
$$
Let us also fix $L\in (0,1]$ such that 
$$
\left( \frac{K_1}{2^J} -L, \frac{K_1}{2^J} + L \right) \subset I \quad \text{and} \quad \left( \frac{K_2}{2^{J +2}} -L, \frac{K_2}{2^{J+2}} + L \right) \subset J. 
$$
For every $j \geq J$, we denote by $(\bj,\bk)$ the unique couple satisfying $j_1 = j = j_2-2$ and $\frac{K_1}{2^J} =\frac{k_1}{2^j} $, $\frac{K_2}{2^{J+2}} = \frac{k_2}{2^{j_2}}$. As done in Equation \eqref{WC2}, we know that 
\begin{align*}
 c_{\bj,\bk}
   & = \int_{\R^2} \Delta  X^{\alpha,H}_{(\frac{y_1}{2^{j_1}},\frac{y_2}{2^{j_2}});(\frac{k_1}{2^{j_1}},\frac{k_2}{2^{j_2}})}  \psi(y_1)\psi(y_2) d\boldsymbol{y}\\
&    = \int_{R_{\bj}} \Delta  X^{\alpha,H}_{(\frac{y_1}{2^{j_1}},\frac{y_2}{2^{j_2}});(\frac{k_1}{2^{j_1}},\frac{k_2}{2^{j_2}})}  \psi(y_1)\psi(y_2) d\boldsymbol{y}
    +\int_{S_{\bj}} \Delta  X^{\alpha,H}_{(\frac{y_1}{2^{j_1}},\frac{y_2}{2^{j_2}});(\frac{k_1}{2^{j_1}},\frac{k_2}{2^{j_2}})}  \psi(y_1)\psi(y_2) d\boldsymbol{y}\\
    & := \tilde{d}_{\bj,\bk} + d_{\bj,\bk}
\end{align*}
 Lemma \ref{lem:irreg4} together with Lemma \ref{lem:irreg2}
implies that there is an event of probability one such that
$$
\limsup_{j_1 \to + \infty }\frac{|d_{\bj,\bk} | }{2^{-2Nj_1} \sqrt{\log(j_1+2)}}< + \infty 
$$
since $\min \{j_1,j_2\}= j_1$. Hence, there is a constant $C_0>0$ such that
$$
|d_{\bj,\bk}| \leq C_0 2^{-N(2j_1 +2)} \sqrt{\log(j_1+2)}
$$
for every $\bj,\bk$.
Now, let us assume by contradiction that on this event of probability one,  there exists a constant $D>0$ such that
$$
\sup_{(x_1,x_2), (x_1+h_1, x_2+h_2) \in I \times J }  |\Delta  X^{\alpha,H}_{(h_1,h_2);(x_1,x_2)}| \leq D \left(\max\{|h_1|,|h_2| \}^{1-\alpha}\min\{|h_1|,|h_2| \}^{1+\alpha}\right)^{H}. 
$$
We obtain then that
\begin{align*}
|\tilde{d}_{\bj,\bk } | & \leq D \int_{R_{j,k}} \left(\max\{|2^{-j_1}y_1|,|2^{-j_2}y_2| \}^{1-\alpha}\min\{|2^{-j_1}y_1|,|2^{-j_2}y_2| \}^{1+\alpha}\right)^{H} |\psi(y_1) \psi(y_2)| d\boldsymbol{y} \\
& \leq 2^{-2 j_1 H}D \int_{R_{j,k}} \left(\max\{|y_1|,|\frac{y_2}{4}| \}^{1-\alpha}\min\{|y_1|,|\frac{y_2}{4}| \}^{1+\alpha}\right)^{H} |\psi(y_1) \psi(y_2)| d\boldsymbol{y} \\
& = C 2^{-2 j_1 H }
\end{align*}
for some constant $C>0$. 
Now, Lemma \ref{lem:irreg1}  and Lemma \ref{lem:irreg2}, applied to scales sufficiently far apart to ensure independence, imply that there are  a constant $C'>0$ and infinitely many  scales such that 
$$
|c_{\bj,\bk}|\geq C' \sqrt{\log(j_1+2)} 2^{-(\max(\bj) H^+_\alpha + \min (\bj) H^-_\alpha)} = C'' \sqrt{\log(j_1+2)} 2^{ -2 j_1 H}
$$
since $j_2=j_1 + 2$.
Putting everything together, we obtain that for infinitely many scales,
$$
C'' \sqrt{\log(j_1+2)} 2^{ - 2 j_1 H} \leq |c_{\bj,\bk}| \leq  C 2^{-2 j_1 H } + C_0 2^{-N(2j_1 +2)} \sqrt{\log(j_1+2)}
$$
which is impossible. It gives the conclusion.  
\end{proof}

\begin{remark}
The same arguments show that the irregularity can  actually  be observed in any direction $j_2 =j_1 +p$, as soon as $p$ is an integer chosen such that $p \notin \{-1,0,1 \}$.
\end{remark}

\section{Associated function spaces}\label{sec:space}

In this section, we  introduce global H\"older spaces to reflect the type of regularity we have obtained for the WTFBFs. These spaces are classically characterized using wavelets. This leads us to the second classical definition of these spaces through Littlewood-Paley analysis, which is extended naturally to Besov spaces.

\subsection{Weighted tensorized H\"older spaces for rectangular regularity}\label{sec:holder}

Let us introduce the weighted tensorized H\"older spaces as follows. 

\begin{definition}
    For  $s\in (0,1)$ and $\alpha \in [0,1]$, we define the  weighted tensorized H\"older space $T^{s, \alpha}C(\R^2)$ as the space of functions $f: \R^2 \to \R$ of $L^{\infty}(\R)$ such that there is $c>0$ with
    \begin{align*}
&|f(x_1+h_1,x_2+h_2)- f(x_1, x_2+h_2) - f(x_1+h_1,x_2)+ f(x_1,x_2)| \\
& \hspace{6cm} \leq c \, \min\{|h_1|,|h_2| \}^{(1+\alpha)s} \max\{|h_1|,|h_2| \}^{(1-\alpha)s},
    \end{align*}
    and 
$$|f(x_1+h_1,x_2)- f(x_1,x_2)| \leq c \, |h_1|^{(1+\alpha)s} \quad \text{and} \quad |f(x_1,x_2+h_2)-  f(x_1,x_2)| \leq c \,  |h_2|^{(1+\alpha)s}$$
for all $x_1,x_2,h_1,h_2 \in \R$. 
\end{definition}

In order to get a  characterization of this space in terms of wavelet coefficients, we first recall the definition of the hyperbolic wavelet bases as tensorial products of two unidimensional wavelet bases (see~\cite{devore:konyagin:temlyakov:1998}). 

\begin{definition}
Let $\psi$ denote a Lemari\'e-Meyer  wavelet  and $\varphi$ the associated scaling function. The hyperbolic wavelet basis is defined as the system $$\{\psi_{j_1,j_2,k_1,k_2} :  \, (j_1, j_2) \in (\N\cup \{-1\})^2, \, (k_1, k_2) \in \mathbb{Z}^2\}$$
where
\begin{itemize}
\item if $j_1,j_2\geq 0$,
$$
\psi_{j_1,j_2,k_1,k_2}(x_1,x_2)= \psi(2^{j_1}x_1-k_1)\psi(2^{j_2}x_2-k_2)\; ,
$$
\item if $j_1=-1$ and $j_2\geq 0$
$$
\psi_{-1,j_2,k_1,k_2}(x_1,x_2)= \varphi(x_1-k_1)\psi(2^{j_2}x_2-k_2)\; ,
$$
\item if $j_1\ge 0$ and $j_2=-1$
$$
\psi_{j_1,-1,k_1,k_2}(x_1,x_2)= \psi(2^{j_1}x_1-k_1)\varphi(x_2-k_2)\;,
$$
\item if $j_1=j_2=-1$
$$
\psi_{-1,-1,k_1,k_2}(x_1,x_2)= \varphi(x_1-k_1)\varphi(x_2-k_2)\;.
$$
\end{itemize}
For any $f\in\mathcal{S}'(\mathbb{R}^2)$, one then defines its hyperbolic wavelet coefficients by
\begin{eqnarray*}
c_{j_1,j_2,k_1,k_2}&=&2^{j_1+j_2}<f,\psi_{j_1,j_2,k_1,k_2}>\mbox{ if }j_1,j_2\geq 0\;,\\
c_{j_1,-1,k_1,k_2}&=&2^{j_1}<f,\psi_{j_1,j_2,k_1,k_2}>\mbox{ if }j_1\geq 0\mbox{ and }j_2=-1\;,\\
c_{-1,j_2,k_1,k_2}&=&2^{j_2}<f,\psi_{j_1,j_2,k_1,k_2}>\mbox{ if }j_1=-1\mbox{ and }j_2\geq 0\;,\\
c_{-1,-1,k_1,k_2}&=&<f,\psi_{j_1,j_2,k_1,k_2}>\mbox{ if }j_1=j_2=-1\;.
\end{eqnarray*}
\end{definition}

 The hyperbolic wavelet coefficients of a function $f$ provide a characterization of the space $T^{s, \alpha}C(\R^2)$, as shown in the following proposition. 

\begin{proposition}\label{prop:caractHolder}
Let $s\in (0,1)$, $\alpha \in [0,1]$ and consider $f: \R^2 \to \R$ such that $f \in \mathcal{S}'(\R)$.    Then $f \in T^{s, \alpha}C(\R^2)$ if and only if its hyperbolic wavelet coefficients satisfy
$$ 
\sup_{\bj\in (\N\cup\{-1\})^2,\bk\in \Z^2} 2^{( (1+\alpha)\max (\bj)+(1-\alpha) \min(\bj)) s}  | c_{\bj,\bk} | < + \infty.$$
\end{proposition}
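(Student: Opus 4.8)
The plan is to prove the two implications separately, in both cases reducing everything to one–dimensional computations by using that the hyperbolic wavelets factorize, $\psi_{\bj,\bk}(x_1,x_2)=\psi(2^{j_1}x_1-k_1)\psi(2^{j_2}x_2-k_2)$.

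\textbf{Necessity.} Fix first $j_1,j_2\ge 0$. After the change of variables $y_i=2^{j_i}x_i-k_i$ and using that $\psi$ has vanishing integral in each variable, I would rewrite the coefficient exactly as in \eqref{WC2}, namely $c_{\bj,\bk}=\int_{\R^2}\Delta f_{(2^{-j_1}y_1,2^{-j_2}y_2);(2^{-j_1}k_1,2^{-j_2}k_2)}\,\psi(y_1)\psi(y_2)\,d\boldsymbol{y}$, and then insert the defining bound of $T^{s,\alpha}C(\R^2)$. The crucial point is the elementary inequality, valid for $j_1\le j_2$ and all $y_1,y_2$,
\[
\min\{2^{-j_1}|y_1|,2^{-j_2}|y_2|\}^{1+\alpha}\max\{2^{-j_1}|y_1|,2^{-j_2}|y_2|\}^{1-\alpha}\le 2^{-((1+\alpha)\max(\bj)+(1-\alpha)\min(\bj))}|y_1|^{1-\alpha}|y_2|^{1+\alpha},
\]
which I would check by distinguishing whether $2^{-j_1}|y_1|$ or $2^{-j_2}|y_2|$ is larger (in the non-generic region the estimate $2^{j_2-j_1}|y_1|<|y_2|$ absorbs the discrepancy). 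Raising to the power $s$ and integrating against $|\psi(y_1)\psi(y_2)|$, the Schwartz decay of $\psi$ makes $\int_\R|y|^{(1\pm\alpha)s}|\psi(y)|\,dy$ finite and produces the claimed bound. The coefficients involving the scaling function are handled analogously: when $j_1=-1$, $j_2\ge 0$, only the vanishing moment in $x_2$ is available, so I reduce to the vertical increment condition $|f(x_1,x_2+h_2)-f(x_1,x_2)|\le c|h_2|^{(1+\alpha)s}$, obtaining $|c_{-1,j_2,\bk}|\lesssim 2^{-(1+\alpha)s\,j_2}\le 2^{-((1+\alpha)j_2-(1-\alpha))s}$; symmetrically for $j_2=-1$; and $|c_{-1,-1,\bk}|\le\|f\|_\infty\|\varphi\|_{1}^2$.

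\textbf{Sufficiency.} Writing $f=\sum_{\bj,\bk}c_{\bj,\bk}\psi_{\bj,\bk}$, the hypothesis $|c_{\bj,\bk}|\le M\,2^{-((1+\alpha)\max(\bj)+(1-\alpha)\min(\bj))s}$ together with $\sum_{k}|\psi(2^{j}t-k)|\le C$ guarantees uniform convergence, so that $f\in L^\infty$. Since $\psi_{\bj,\bk}$ is a tensor product, its rectangular increment factorizes as $\delta_{h_1}\psi_{j_1,k_1}(x_1)\,\delta_{h_2}\psi_{j_2,k_2}(x_2)$; bounding $|c_{\bj,\bk}|$ uniformly in $\bk$ and summing over $\bk$ then leaves a product of one–dimensional sums controlled by the basic estimate
\[
\sum_{k\in\Z}|\psi(2^{j}(x+h)-k)-\psi(2^{j}x-k)|\le C\min\{1,2^{j}|h|\},
\]
which follows from the triangle inequality when $2^{j}|h|\ge 1$ and from the mean value theorem together with the decay of $\psi'$ when $2^{j}|h|<1$. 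This yields
\[
|\Delta f_{(h_1,h_2);(x_1,x_2)}|\lesssim M\sum_{\bj}2^{-((1+\alpha)\max(\bj)+(1-\alpha)\min(\bj))s}\min\{1,2^{j_1}|h_1|\}\min\{1,2^{j_2}|h_2|\},
\]
and the two one–dimensional conditions are obtained in the same way, replacing one increment factor by $\sum_k|\psi_{j,k}|\le C$.

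\textbf{Main obstacle.} The whole sufficiency direction reduces to showing that the double series above is $\lesssim\min\{|h_1|,|h_2|\}^{(1+\alpha)s}\max\{|h_1|,|h_2|\}^{(1-\alpha)s}$. Setting $2^{-p_i}\approx|h_i|$ so that $\min\{1,2^{j_i}|h_i|\}\approx\min\{1,2^{j_i-p_i}\}$, I would split the sum according to the ordering of $j_1,j_2$ (to resolve $\max(\bj)$ and $\min(\bj)$) and, within each region, according to whether $j_i\lessgtr p_i$. The anisotropic weight couples the two indices through $\max/\min$, so the geometric series do not separate cleanly; the dominant block $j_i\approx p_i$ reproduces the target exponent $2^{-((1+\alpha)\max\{p_1,p_2\}+(1-\alpha)\min\{p_1,p_2\})s}$, while the remaining blocks are summable because $(1+\alpha)s,(1-\alpha)s\ge 0$. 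Carefully bookkeeping these regions — and in particular the logarithmic effects appearing at the endpoint $\alpha=1$, where $(1-\alpha)s=0$ — is the step that will require the most care.
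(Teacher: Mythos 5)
Your proposal follows essentially the same route as the paper's proof: your necessity argument (vanishing moments, the change of variables, the same elementary inequality on the weighted min/max factors, and the reduction to one-dimensional increments when an index equals $-1$) is the paper's argument verbatim, and your sufficiency scheme --- expanding in the hyperbolic basis, factorizing the rectangular increment of each tensor wavelet, invoking $\sum_{k\in\Z}\bigl|\psi(2^{j}(x+h)-k)-\psi(2^{j}x-k)\bigr|\le C\min\{1,2^{j}|h|\}$, and splitting the double sum according to the ordering of $j_1,j_2$ and $j_i \lessgtr p_i$ --- is precisely the five-region bookkeeping that the paper carries out explicitly with its estimates \eqref{eq:sum0} and \eqref{eq:sum2}. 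Your flagged concern at the endpoint is in fact well founded: the paper's geometric-series bounds, such as $\sum_{j_2=J_1}^{j_1}2^{-(1-\alpha)sj_2}\le C\,2^{-(1-\alpha)sJ_1}$ and $\sum_{j_1=0}^{J_1-1}2^{(1-(1+\alpha)s)j_1}\le C\,2^{(1-(1+\alpha)s)J_1}$, silently require $(1-\alpha)s>0$ and $(1+\alpha)s<1$, so the paper's own sketch degenerates exactly at $\alpha=1$ (and when $(1+\alpha)s\ge 1$) where you anticipate logarithmic corrections --- a point the paper glosses over rather than resolves.
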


\begin{proof}
    Since it is very classical, we just sketch the ideas of the proof. First, assume that $f \in T^{s, \alpha}C(\R^2)$. 
By using the vanishing moment of the wavelet, we can write
\begin{align*}
   c_{j_1,j_2,k_1,k_2}
    &  = 2^{j_1+j_2} \int_{\R^2} f(x_1,x_2) \psi(2^{j_1}x_1-k_1)\psi(2^{j_2}x_2-k_2) d\boldsymbol{x}\\[2ex]
    &  =  \int_{\R^2} f(\frac{y_1+k_1}{2^{j_1}},\frac{y_2+k_2}{2^{j_2}}) \psi(y_1)\psi(y_2) d\boldsymbol{y}\\[2ex]
    & =  \int_{\R^2} \Delta f_{\big(\frac{y_1}{2^{j_1}},\frac{y_2}{2^{j_2}}\big)}\big(\frac{k_1}{2^{j_1}}, \frac{k_2}{2^{j_2}}\big) \psi(y_1)\psi(y_2) d\boldsymbol{y}
\end{align*}
so that
\begin{align*}
|c_{j_1,j_2,k_1,k_2}|& \leq c\int_{\R^2}  \max\left\{\frac{|y_1|}{2^{j_1}},\frac{|y_2|}{2^{j_2}} \right\}^{(1-\alpha)s}\min\left\{\frac{|y_1|}{2^{j_1}},\frac{|y_2|}{2^{j_2}} \right\}^{(1+\alpha)s}\!|\psi(y_1)|\, |\psi(y_2) |d\boldsymbol{y}.
\end{align*}
Now, if $j_1\geq j_2$, we have
$$
\max\left\{\frac{|y_1|}{2^{j_1}},\frac{|y_2|}{2^{j_2}} \right\}^{(1-\alpha)s}\min\left\{\frac{|y_1|}{2^{j_1}},\frac{|y_2|}{2^{j_2}} \right\}^{(1+\alpha)s} \leq  2^{-(j_1 (1+\alpha) +j_2 (1-\alpha))s} \max\{ |y_1|,|y_2 |\}^{(1-\alpha)s} |y_1|^{(1+\alpha)s} 
$$
so that
$$
| c_{j_1,j_2,k_1,k_2}| \leq 2^{-(j_1 (1+\alpha) +j_2 (1-\alpha))s} \int_{\R^2 }\max\{ |y_1|,|y_2 |\}^{(1-\alpha)s} |y_1|^{(1+\alpha)s} \psi(y_1) \psi(y_2) dy_1dy_2.
$$
We proceed in the same way if $j_2 \geq j_1$. Now, if $j_1=-1$, we have
\begin{align*}
   c_{-1,j_2,k_1,k_2}
    &  = 2^{j_2} \int_{\R^2} f(x_1,x_2) \varphi(x_1-k_1)\psi(2^{j_2}x_2-k_2) d\boldsymbol{x}\\[2ex]
    &  =  \int_{\R^2} f(y_1+k_1,\frac{y_2+k_2}{2^{j_2}}) \varphi(y_1)\psi(y_2) d\boldsymbol{y}\\[2ex]
    & =  \int_{\R^2} \big(f(y_1+k_1,\frac{y_2+k_2}{2^{j_2}})  - f(y_1+k_1,\frac{k_2}{2^{j_2}}) \big)\varphi(y_1)\psi(y_2) d\boldsymbol{y}
\end{align*}
and we use the fact that 
$$
|f(y_1+k_1,\frac{y_2+k_2}{2^{j_2}})  - f(y_1+k_1,\frac{k_2}{2^{j_2}})| \leq c |\frac{y_2}{2^{j_2}}|^{(1+\alpha)s}
$$
to get the conclusion. The same argument holds for $j_2=-1$.

\smallskip

Let us now prove the converse result. We fix $x_1,x_2,h_1,h_2 \in \R$. Assume that $J_1,J_2 \in \N$ are such that 
$$
2^{-(J_1+1) }\leq|h_1|<2^{-J_1} \quad \text{et} \quad 2^{-(J_2+1) }\leq|h_2|<2^{-J_2}.  
$$
We have 
\begin{align}
  f& = \sum_{(\bj,\bk) \in \N^2 \times \Z^2} c_{\bj,\bk } \psi(2^{j_1}\cdot - k_1) \psi(2^{j_2}\cdot - k_2) + \sum_{j_2 \in \N,\bk \in \Z^2 } c_{-1,j_2,\bk}\varphi(\cdot - k_1) \psi(2^{j_2}\cdot - k_2) \nonumber\\
  & \quad + \sum_{j_1 \in \N,\bk \in \Z^2 }c_{j_1,-1,\bk} \psi(2^{j_1}\cdot - k_1) \varphi(\cdot - k_2) + \sum_{\bk \in \Z^2 }c_{-1,-1,\bk}\varphi(\cdot - k_1) \varphi(\cdot - k_2)  
\end{align}
so that the rectangular increment
$$f(x_1+h_1,x_2+h_2)- f(x_1, x_2+h_2) - f(x_1+h_1,x_2)+ f(x_1,x_2)
$$
can be decomposed into the four corresponding sums. For example, let us study the first contribution. The argument of the other sums can be obtained with an easy adaptation of the arguments. Note that the rectangular increment of $\psi(2^{j_1}\cdot - k_1) \psi(2^{j_2}\cdot - k_2)$ at $(x_1,x_2)$ of step $(h_1,h_2)$ is equal to 
$$
\big(\psi(2^{j_1}(x_1+h_1) - k_1)  -  \psi(2^{j_1}x_1 - k_1) \big) \big(\psi(2^{j_2}(x_2+h_2) - k_2) -\psi(2^{j_2}x_2 - k_2)\big)
$$
and it follows that one has to estimate
\begin{align}\label{eq:sum1}
 \sum_{(\bj,\bk) \in \N^2 \times \Z^2} |c_{\bj,\bk }| \,  \big|\psi(2^{j_1}(x_1+h_1) - k_1)  -  \psi(2^{j_1}x_1 - k_1) \big| \, \big|\psi(2^{j_2}(x_2+h_2) - k_2) -\psi(2^{j_2}x_2 - k_2)\big| .
\end{align}
We will use the two following upper bounds. First, for every $N \geq2$, there exists a constant $C_N>0 $ such that
\begin{align}\label{eq:sum00}
\sup_{x \in \R }\sum_{k \in \Z } \dfrac{1}{(1+|x-k|)^N} \leq C_N. 
\end{align}
Together with the fast decay of $\psi$, we obtain 
\begin{align}\label{eq:sum0}
\sup_{x \in \R} \sum_{k \in \Z} |\psi(2^jx-k)| \leq C_N
\end{align}
for all $j \in \N$. 
Seconldy, for every $j \in \N$, we set
   \begin{align*}
    F_{j} (x) = \sum_{k \in \Z} \psi(2^{j}x - k)  
    \end{align*}
    for all $x \in \R$. The regularity of $\psi$ and the mean value theorem imply that 
  \begin{align}\label{eq:sum2}
    |F_j(x+h) - F_j(x)| \leq 2^j|h| \sup_{\R }\sum_{k \in \Z} |D\psi(\cdot  - k)|  \leq C  2^j|h|
    \end{align}
    for some constant $C>0$ that does not depend on $j$, by using the fast decay of $\psi$ and Equation \eqref{eq:sum00}.  

Now,  assume that $J_1 <J_2$. The same argument will apply for $J_2 \leq J_1$ by symmetry. The quantity \eqref{eq:sum1} will be in turn decomposed into five sums: for $0 \leq j_1,j_2 < J_1$, for $0 \leq j_1 < J_1 \leq j_2 < J_2$, for $J_1 \leq j_1,j_2 < J_2$, for $0 \leq j_1<J_1<J_2 \leq j_2$ and finally for $j_1 \geq J_1$ and $j_2 \geq J_2$. First, using \eqref{eq:sum2}, we have
\begin{align*}
  & \sum_{0 \leq j_1,j_2 < J_1}  \sum_{\bk \in \Z^2} |c_{\bj,\bk }| \,  \big|\psi(2^{j_1}(x_1+h_1) - k_1)  -  \psi(2^{j_1}x_1 - k_1) \big| \, \big|\psi(2^{j_2}(x_2+h_2) - k_2) -\psi(2^{j_2}x_2 - k_2)\big| \\
  & \leq \sum_{j_1,j_2 \leq J_1} 2^{-\big( (1+\alpha)\max\{ j_1,j_2\}+(1-\alpha) \min\{j_1,j_2\}\big) s} C^2  2^{j_1+j_2}|h_1h_2| \\
  & \leq C^2 2^{-(J_1+J_2)} \left( \sum_{j_1=0}^{J_1-1} 2^{(1- (1+\alpha)s)j_1 }  \sum_{j_2=0}^{j_1}2^{(1-(1-\alpha)s)j_2 }   + \sum_{j_1=0}^{J_1-1} 2^{(1-(1-\alpha))sj_1}   \sum_{j_2=j_1+1}^{J_1-1} 2^{(1-(1+\alpha))sj_2} \right)\\
  & \leq C^2 2^{-(J_1+J_2)} \left( \sum_{j_1=0}^{J_1-1} 2^{(1- (1+\alpha)s)j_1 }  2^{(1-(1-\alpha)s)J_1 }   + \sum_{j_1=0}^{J_1-1} 2^{(1-(1-\alpha))sj_1}  2^{(1-(1+\alpha))sJ_2} \right)\\
   & \leq C^2 2^{-(J_1+J_2)} \left(  2^{(1- (1+\alpha)s)J_2 }  2^{(1-(1-\alpha)s)J_1 }   +  2^{(1-(1-\alpha))sJ_1}  2^{(1-(1+\alpha))sJ_2} \right)\\
   & \leq 2C^2 2^{-\big((1+\alpha)J_2 +(1-\alpha)J_1 \big)s }  
\end{align*}
since $J_1 < J_2$. We proceed similarly if $0 \leq j_1 < J_1 \leq j_2 < J_2$. Consider now the sum corresponding to the values of $\bj$ such that $J_1 \leq j_1,j_2 < J_2$. By using \eqref{eq:sum0} for the sum over $k_1$ and \eqref{eq:sum2} for the sum over $k_2$, one has 
\begin{align*}
  & \sum_{J_1 \leq j_1,j_2 < J_2}   \sum_{\bk \in \Z^2}|c_{\bj,\bk }| \,  \big|\psi(2^{j_1}(x_1+h_1) - k_1)  -  \psi(2^{j_1}x_1 - k_1) \big| \, \big|\psi(2^{j_2}(x_2+h_2) - k_2) -\psi(2^{j_2}x_2 - k_2)\big| \\
  & \leq  \sum_{J_1 \leq j_1,j_2 < J_2} \!\!c' 2^{-\big( (1+\alpha)\max\{ j_1,j_2\}+(1-\alpha) \min\{j_1,j_2\}\big) s}   C 2^{j_2} |h_2|\sum_{k_1} \left(\big|\psi(2^{j_1}(x_1+h_1) - k_1)\big| +\big|\psi(2^{j_1}x_1 - k_1)\big|  \right) \\
  & \leq 2^{-J_2}\sum_{j_1=J_1}^{J_2-1} \sum_{j_2 = J_1}^{j_1}  2^{-\big( (1+\alpha)j_1+(1-\alpha)j_2\big) s}  2C^2 \underbrace{2^{j_2}}_{\leq 2^{j_1}}  + 2^{-J_2}
  \sum_{j_1=J_1}^{J_2-1} \sum_{j_2 = j_1+1}^{J_2-1}   2^{-\big( (1+\alpha)j_2+(1-\alpha)j_1\big) s} 2C^2 2^{j_2}  \\
  & \leq 2 C^2 2^{-J_2} \sum_{j_1=J_1}^{J_2-1}2^{(1- (1+\alpha)s)j_1}  \sum_{j_2 = J_1}^{j_1}  2^{-(1-\alpha)sj_2}    + 2C^2 2^{-J_2}
  \sum_{j_1=J_1}^{J_2-1} 2^{-(1-\alpha)s j_1}  \sum_{j_2 = j_1+1}^{J_2-1}   2^{(1- (1+\alpha)s)j_2} \\
  & \leq 2 C^2 2^{-J_2} 2^{(1- (1+\alpha)s)J_2}   2^{-(1-\alpha)sJ_1} + 2C^2 2^{-J_2} 2^{-(1-\alpha)s J_1}   2^{(1- (1+\alpha)s)J_2} \\
  & \leq   4C^2  2^{ -\big((1+\alpha)J_2+(1-\alpha)J_1\big)s}.
  \end{align*}
  The sums corresponding to the values of $\bj$ with $0 \leq j_1<J_1<J_2 \leq j_2$ or with $j_1 \geq J_1$ and $j_2 \geq J_2$ are treated in a similar way. 
\end{proof}

\subsection{Hyperbolic Littlewood-Paley analysis and weighted tensorized H\"older spaces}

A classical approach for defining H\"older spaces without relying on finite differences involves the use of Littlewood-Paley analysis. This methodology also provides a straightforward framework for defining Besov spaces.  In this section, we adopt, as in reference \cite{ACJRV}, a hyperbolic Littlewood-Paley analysis to introduce new spaces that reflect weighted rectangular anisotropy. Notably, these spaces were independently and concurrently introduced in \cite{Har24} in the context of linear approximation problems.

The key feature of our approach is the weight that appears in the definition of these Besov spaces (see Definition \ref{defB} below) 
$$2^{((1+\alpha)\max(\bj) + (1-\alpha) \min(\bj))sq}. $$
This reveals connections with well-known function spaces: \begin{itemize}
    \item for $\alpha =0$, the weight simplifies to $$2^{((1+\alpha)\max(\bj) + (1-\alpha) \min(\bj))sq} = 2^{(j_1+j_2)sq}$$ leading to the Besov spaces with dominating mixed smoothness $S^s_{p,q}B(\R^2)$.
    \item for $\alpha=1$,  the weight becomes $$2^{((1+\alpha)\max(\bj) + (1-\alpha) \min(\bj))sq} = 2^{\max(\bj)2sq}$$ corresponding to the isotropic hyperbolic Besov spaces (with regularity $2s$).
\end{itemize}
However, when $\alpha \in (0,1)$, neither the hyperbolic Besov spaces nor the Besov spaces with dominating mixed smoothness are sufficient to characterize the type of rectangular anisotropy under consideration. This motivates the introduction of new spaces that bridge the gap between these existing frameworks.

Before introducing the hyperbolic Littlewood-Paley analysis, we first recall the classical Littlewood-Paley decomposition and the associated Besov spaces. After establishing these fundamental concepts, we turn to the hyperbolic Littlewood-Paley analysis. We revisit the hyperbolic Besov spaces and the Besov spaces with dominating mixed smoothness before defining the new spaces that reflect weighted rectangular anisotropy.
We then investigate the relationships between these new spaces and the previously mentioned Besov spaces and finally propose a wavelet characterization. This wavelet description aligns with the characterization provided for Hölder spaces in the previous subsection.

\subsubsection{Littlewood-Paley analysis and Besov spaces}
Let us recall the definition of classical Besov spaces, hyperbolic Besov spaces, and spaces with dominating mixed smoothness. 
The last two ones are defined with the hyperbolic Littlewood-Paley analysis but the first one is defined with a classical Littlewood-Paley analysis.

Let $\varphi_0  \ge 0$ belong to the Schwartz class ${\mathcal{S}}(\mathbb{R}^2)$ and be such that, for
$\boldsymbol{\xi}=(\xi_1,\xi_2) \in \mathbb{R}^2$,
$$
\varphi_0(\boldsymbol{\xi}) = 1 \quad \text{if} \quad \max\{|\xi_1|,|\xi_2| \}  \le 1\;,
$$
and
$$
\varphi_0(\boldsymbol{\xi})=0 \quad \text{if}  \quad \max\{ |2^{-1} \xi_1|, |2^{-1} \xi_2|\} \ge 1\;.
$$
For $j\in\mathbb{N}$, further define
\begin{align*}
\varphi_j(\boldsymbol{\xi}) &:= \varphi_0(2^{-j }\boldsymbol{\xi})-\varphi_0(2^{-(j-1)}\boldsymbol{\xi}) \\
&= \varphi_0(2^{-j }\xi_1,  2^{-j }\xi_2)-\varphi_0(2^{-(j-1)}\xi_1, 2^{-(j-1)}\xi_2)\,.
\end{align*}
Then
$\sum_{j\in \N} \varphi_j =  1$,
and  the sequence $(\theta_j)_{j \in \N}$ satisfies
\begin{equation*}
\mathrm{supp}\; (\varphi_0) \subset R_1, \quad \mathrm{supp}\; (\varphi_j)
\subset R_{j+1} \setminus R_{j-1}\;,
\end{equation*}
where
\begin{align}\label{rect}
R_j = \Big\lbrace \boldsymbol{\xi} \in \mathbb{R}^2\,:\; \max\{ |\xi_1|, |\xi_2|\}  \le 2^{j}
  \Big\rbrace \;.
\end{align}
For $f \in \mathcal{S}'(\mathbb{R}^2)$, we then define
$$
\Delta_j f := {\mathcal{F}}^{-1} ( \varphi_j {\mathcal{F}}f )\;.
$$
The sequence $(\Delta_j f )_{j \in \N}$ is called a {\it Littlewood-Paley analysis} of $f$. With this tool,
the Besov spaces are now defined as follows (see \cite{Tri06} for example for different equivalent definitions of spaces of smoothness, including anisotropic and weighted spaces). 

\begin{definition}
For $0<p ,q\le \infty$ and $s\in \mathbb{R}$, the Besov space $B^{s}_{p,q}(\mathbb{R}^2)$  is
defined by
$$
B^{s}_{p,q}(\mathbb{R}^2) = \Big\{ f \in \mathcal{S}'(\mathbb{R}^2)~:~ \, \Big( \sum_{j \in \mathbb{N}} 2^{jsq} \Vert
\Delta_j f \Vert_p^q \Big)^{1/q} <\infty \Big\}\,,
$$
with the usual modification for $q=\infty$.

This definition does not depend on chosen resolution of unity $\varphi_0$ and the quantity
$$
\Vert f \Vert_{B^{s}_{p,q}} = \Big( \sum_{j \in \mathbb{N}}  2^{jsq} \Vert \Delta^{}_jf \Vert_p^q
\Big)^{1/q}
$$
is a norm (resp. quasi-norm) on $B^{s}_{p,q}(\mathbb{R}^2)$ for $1 \leq p, \, q \leq \infty$ (resp. $0< \min
\{p,q\} <1$), with the usual modification if $q=\infty$.
\end{definition}

We also define the  Besov spaces with logarithmic scale. Again, we use the usual modification if $q= \infty$.
\begin{definition}
    For $0 \leq p, \, q \leq \infty$ and $s, \, \beta \in \R$, the Besov space with logarithmic scale is defined by 
\begin{equation}
 B^{s}_{p,q,\vert \log \vert^{\beta}}(\R^2) :  = \Big\{ f \in \mathcal{S}'(\mathbb{R}^2)~:~ \,  \sum_{j \in \N} j ^{-\beta q} 2^{jsq} \Vert \Delta_j f\Vert_p^q < \infty\big\}
\end{equation}
and we define a norm on $B^{s}_{p,q,\vert \log \vert^{\beta}}(\R^2)$ by
$$
\Vert f \Vert_{B^{s}_{p,q,\vert \log \vert^{\beta}}(\R^2)} = \left( \sum_{j \in \N} j ^{-\beta q} 2^{jsq} \Vert \Delta_j f \Vert_p^q \right)^{1/q}.
$$
\end{definition}

Subsequently, in order to compare the different spaces, we will use $\varphi_0$ as the tensor product of two $1$-dimensional functions. In other words, we consider $\varphi_0$ defined by
$$\varphi_0(\xi_1,\xi_2) = \theta_0(\xi_1) \theta_0(\xi_2)$$
where $\theta_0$ is a one-dimensional function.

\subsubsection{Hyperbolic Littlewood-Paley analysis}
Let $\theta_0 \in \mathcal{S}(\mathbb{R})$ be a non-negative function supported on $[-2,2]$ with $\theta_0 =1$ on $[-1,1]$. For any $j \in
\mathbb{N}_0$, let us further define 
$$
\theta_j = \theta_0(2^{-j} \cdot) - \theta_0(2^{-(j-1)} \cdot)
$$
such that $(\theta_{j})_{j \in \N}$ forms a univariate resolution of unity, i.e.,
$\sum_{j \in \N} \theta_j =1$. Observe that, for any $j \in \N_0$,
$$\mathrm{supp}(\theta_j) \subset \{ \boldsymbol{\xi} \in \R^2 : 2^{j-1} \le \vert \xi \vert \le 2^{j+1} \} \text{ and }\theta_j=\theta_1(2^{-(j-1)} \cdot).$$

\begin{remark}\label{rem:Meyer} 
In the following, the function $\theta_0$ can be chosen with an arbitrary compact support. It does not change the main
results even if technical details of proofs and lemmas have to be adapted. This allows to choose $\theta_0$ as the Fourier transform of a Meyer scaling function.
\end{remark}


\begin{definition}\label{def:hypLP} 
\begin{itemize}
    \item[(i)] For any $\bar{j} = (j_1,j_2) \in \N^2$ and any
$\boldsymbol{\xi}=(\xi_1,\xi_2) \in \mathbb{R}^2$ set
$$
\theta_{\bj}(\boldsymbol{\xi}) := \theta_{j_1}(\xi_1) \theta_{j_2}(\xi_2) \;.
$$
The function $\theta_{\bj}$ belongs to $\mathcal{S}(\mathbb{R}^2)$ for all $\bj \in \N_0^2$ and is compactly supported
on a dyadic rectangle. Further $\sum_{\bj \in \N^2}  \theta_{\bj} = 1$ and
$(\theta_{\bj})_{\bj \in \N^2}$ is called a hyperbolic resolution of unity.
\item[(ii)] For $f \in \mathcal{S}'(\mathbb{R}^2)$ and $\bj \in \N^2$ set
$$
\Delta_{\bj}f :=  {\mathcal{F}}^{-1} (\theta_{\bj} {\mathcal{F}}f )\;.
$$
The sequence $(\Delta_{\bj}f)_{\bj\in \N^2}$ is called a hyperbolic Littlewood-Paley analysis of $f$.
\end{itemize}
\end{definition}

Let us now introduce the hyperbolic Besov spaces with logarithmic scale, see \cite{ACJRV, SUV}.
\begin{definition}
   For $0 \leq p, \, q \leq \infty$ and $s, \, \beta \in \R$, the hyperbolic Besov space with hyperbolic scale is defined by
\begin{equation}
 \widetilde{B}^{s}_{p,q,\vert \log \vert^{\beta}}(\R^2)  :  = \Big\{ f \in \mathcal{S}'(\mathbb{R}^2)~:~ \,   \sum_{\bj \in \N^2 } (\max(\bj)) ^{-\beta q} 2^{\max(\bj)sq} \Vert \Delta_{\bj} f \Vert_p^q < \infty \Big\}
\end{equation}
with the norm
$$
\Vert f \Vert_{\widetilde{B}^{s}_{p,q,\vert \log \vert^{\beta}}(\R^2)} = \left( \sum_{\bj \in \N^2 } (\max(\bj)) ^{-\beta q} 2^{\max(\bj)sq} \Vert \Delta_{\bj} f \Vert_p^q \right)^{1/q},
$$
with the usual modification if $q= \infty$. 
\end{definition}
Note that the case $\beta=0$ gives the spaces without logarithmic correction. In \cite{ACJRV}, the following theorem was proved, which highlights the close connection between classical Besov spaces and their hyperbolic versions. This result serves as the foundation for providing a quasi-universal basis -- the hyperbolic one -- of numerous Besov and Triebel-Lizorkin spaces.

\begin{theorem}\label{Thm:BesovLog}
Let $s, \beta \in \R$ and $0 < p,q\leq \infty$ and $p'$ the conjugate exponent of $p$. We have the following embeddings
\begin{itemize}
\item if $q<+\infty$
\begin{equation}
\widetilde{B}^s_{p,q,\vert \log \vert^{\beta - r_1/q}}(\R^2) \hookrightarrow B^s_{p,q,\vert \log \vert^{\beta}}(\R^2) \hookrightarrow \widetilde{B}^s_{p,q,\vert \log \vert^{\beta +r_2/q}}(\R^2)
\end{equation}
where $$r_1 = \begin{cases}q(\frac{1}{p}-1)+ \max(q-1,0) & \text{ if } p \le 1 \\ 
\max( \frac{q}{\min(p,p')}-1,0) & \text{ if }p>1 \end{cases}$$
and
$$
r_2 = \begin{cases}1  & \text{ if } p < 1 \\ 
\max(1- \frac{q}{\max(p,p')}-1,0) & \text{ if }p\ge 1 .\end{cases}$$

\item if $q=+\infty$
\begin{equation}
\widetilde{B}^s_{p,\infty,\vert \log \vert^{\beta - \max(1/p-1,0) -1}}(\R^2) \hookrightarrow B^s_{p,\infty,\vert \log \vert^{\beta}}(\R^2) \hookrightarrow \widetilde{B}^s_{p,\infty,\vert \log \vert^{\beta }}(\R^2)
\end{equation}

\item if $q=2$, the Sobolev spaces $H^{s}_{2,\vert \log \vert^{\beta}}(\R^2)$ coincide with the classical Besov spaces $B^{s}_{2,2,\vert \log \vert^{\beta}}(\R^2)$ and with the hyperbolic Besov spaces $\widetilde{B}^{s}_{2,2,\vert \log \vert^{\beta}}(\R^2) $.
\end{itemize}
\end{theorem}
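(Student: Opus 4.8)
The plan is to reduce the entire statement to one exact identity relating the classical and hyperbolic resolutions of unity, after which both embeddings become a purely $\ell_p$--$\ell_q$ bookkeeping exercise in which all the arithmetic of the exponents $r_1,r_2$ is concentrated.

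First I would exploit the tensor-product choice $\varphi_0(\xi_1,\xi_2)=\theta_0(\xi_1)\theta_0(\xi_2)$ made at the end of the Littlewood--Paley subsection. Writing $\Theta_j:=\theta_0(2^{-j}\cdot)$ for the one-dimensional dilate, the telescoping $\theta_j=\Theta_j-\Theta_{j-1}$ gives $\Theta_j=\sum_{i\le j}\theta_i$, whence
\begin{align*}
\varphi_j &= \Theta_j(\xi_1)\Theta_j(\xi_2)-\Theta_{j-1}(\xi_1)\Theta_{j-1}(\xi_2) \\
&= \sum_{\max(\bj)\le j}\theta_{\bj}-\sum_{\max(\bj)\le j-1}\theta_{\bj}=\sum_{\max(\bj)=j}\theta_{\bj}.
\end{align*}
Equivalently, $\Delta_j f=\sum_{\bj:\,\max(\bj)=j}\Delta_{\bj}f$. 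This is the key point: the classical block at scale $j$ is exactly the sum of the $\#\{\bj:\max(\bj)=j\}\asymp j$ hyperbolic blocks whose larger index equals $j$, and for every one of them the hyperbolic weight $2^{\max(\bj)sq}$ coincides with the classical weight $2^{jsq}$. The smoothness part therefore matches perfectly, and the only discrepancies between the two (quasi-)norms are the cardinality $\asymp j$ of the blocks feeding $\Delta_j f$ (source of the logarithmic corrections) and the passage between $\|\sum_{\bj}\Delta_{\bj}f\|_p$ and the individual $\|\Delta_{\bj}f\|_p$.

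For the embedding $\widetilde B\hookrightarrow B$ I would start from $\|\Delta_j f\|_p\le\sum_{\max(\bj)=j}\|\Delta_{\bj}f\|_p$ (using subadditivity of $\|\cdot\|_p^{\min\{p,1\}}$) and then compare the $\ell_1$/$\ell_p$ sum over the $\asymp j$ contributing blocks with the $\ell_q$ quantity in the $\widetilde B$ norm. A crude H\"older bound already produces a logarithmic loss with exponent $q-1$ (resp.\ $1$); the sharp $r_1$ is then recovered by exploiting that the contributing $\Delta_{\bj}f$ have essentially disjoint frequency supports, so that at $p=2$ Plancherel yields \emph{no} loss, and interpolating this orthogonality against the endpoint bounds produces the stated $\min(p,p')$-dependent value. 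For the converse $B\hookrightarrow\widetilde B$ I would run the argument backwards: since $\theta_{\bj}$ with $\max(\bj)=j$ is supported where only finitely many classical $\varphi_i$ (those with $|i-j|\le2$) are nonzero, one has $\Delta_{\bj}f=\sum_{|i-j|\le2}\mathcal F^{-1}(\theta_{\bj}\widehat{\Delta_i f})$ with each summand satisfying $\|\mathcal F^{-1}(\theta_{\bj}\widehat{\Delta_i f})\|_p\lesssim\|\Delta_i f\|_p$. Summing over the $\asymp j$ blocks with $\max(\bj)=j$ and reorganising the $j$-sum against the matched weight then bounds the hyperbolic norm by the classical one, the counting now yielding $r_2$. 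The endpoint $q=\infty$ is treated by the same two steps with sums replaced by suprema, and the case $p=q=2$ follows directly from Plancherel and the almost-orthogonality of the $\Delta_{\bj}f$, giving an exact equivalence with no logarithmic loss (and hence the identification with the Sobolev scale $H^s_{2,\vert\log\vert^{\beta}}=B^s_{2,2,\vert\log\vert^{\beta}}$).

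The main obstacle is twofold: establishing the multiplier estimate $\|\mathcal F^{-1}(\theta_{\bj}\widehat{\Delta_i f})\|_p\lesssim\|\Delta_i f\|_p$ uniformly in $\bj$, and extracting the sharp exponents $r_1,r_2$. For $p\ge1$ the multiplier bound is Young's inequality, since $\theta_{\bj}$ is a fixed Schwartz profile dilated in each variable and hence has scale-invariant $\|\mathcal F^{-1}\theta_{\bj}\|_1$; for $p<1$ Young fails and one must instead use that $\Delta_{\bj}f$ is band-limited to a rectangle of controlled eccentricity and invoke a Plancherel--P\'olya--Nikolskii maximal estimate. Pinning down $r_1$ and $r_2$ then amounts to interpolating the $p=2$ orthogonality against the endpoint counting bounds while tracking duality, which is exactly what produces their piecewise dependence on $p$ and $q$.
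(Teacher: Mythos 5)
First, a point of calibration: the paper does not prove this theorem at all --- it is quoted verbatim from the reference \cite{ACJRV}, and the source's argument runs through hyperbolic \emph{wavelet} characterizations of the classical Besov spaces (the same Frazier--Jawerth machinery that reappears in this paper as Lemma \ref{lem:FJ}, Lemma \ref{lem:FJ2} and the convolution estimate \eqref{eq:convol}), with the logarithmic exponents emerging from sequence-space bookkeeping. Your route --- direct comparison of Littlewood--Paley blocks --- is therefore genuinely different, and its starting point is correct and clean: since the paper fixes $\varphi_0=\theta_0\otimes\theta_0$ (and the classical space is independent of the resolution of unity), the telescoping identity $\varphi_j=\sum_{\max(\bj)=j}\theta_{\bj}$, hence $\Delta_j f=\sum_{\max(\bj)=j}\Delta_{\bj}f$, is exact, the weights $2^{jsq}$ and $2^{\max(\bj)sq}$ match on the contributing blocks, and the $\asymp j$ cardinality is indeed the sole source of the logarithms. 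For the left embedding your crude counting already closes the case $p\le 1$ (the $p$-triangle inequality plus H\"older gives exponent $q/p-1=q(1/p-1)+(q-1)$, which meets or beats the stated $r_1$), and the identification at $p=q=2$ via Plancherel and finite overlap is fine.

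There are, however, two genuine gaps. First, for $p>1$ the entire quantitative content of the theorem --- the values $\max(q/\min(p,p')-1,0)$ and $\max(1-q/\max(p,p'),0)$ --- is compressed into the sentence ``interpolating this orthogonality against the endpoint bounds while tracking duality.'' That step is not routine: one must interpolate operator norms of the synthesis map $(f_{\bj})\mapsto\sum_{\bj}f_{\bj}$ between vector-valued $L^p(\ell_q)$ spaces with $N$-dependent constants, or invoke the two-parameter Littlewood--Paley square function for $1<p<\infty$ together with a duality argument; without executing it you have proved the embeddings only with the crude exponents $q-1$ and $1$, not the stated ones. Second, and more seriously, your treatment of the right embedding for $p<1$ is wrong as described. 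The Plancherel--P\'olya--Nikolskii multiplier bound $\Vert\mathcal{F}^{-1}(\theta_{\bj}\widehat{\Delta_i f})\Vert_p\le C\Vert\Delta_i f\Vert_p$ is \emph{not} uniform in $\bj$: the spectrum of $\Delta_i f$ fills a square of side $\asymp 2^i$ while $\theta_{j_2}$ varies at scale $2^{j_2}$ with $j_2=\min(\bj)$ possibly far below $i\approx\max(\bj)$, and the band-limited multiplier theorem for $L^p$, $p<1$, then carries a constant of order $2^{(\max(\bj)-\min(\bj))(1/p-1)}$. This eccentricity loss is geometric, not logarithmic, so summing over the $\asymp j$ blocks with $\max(\bj)=j$ does not yield $r_2=1$; coordinatewise rescaling cannot remove it, because the relevant ratio is between the spectral support of the \emph{function} and the plateau of the multiplier. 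It is no accident that the synthesis lemma used in this circle of ideas (Lemma \ref{lem:FJ2} above, i.e.\ Lemma 4.6 of \cite{ACJRV}) is stated only for $1\le p\le\infty$: the case $p<1$ requires a different mechanism (sampling at Nyquist density per coordinate, as in Lemma \ref{lem:FJ}, and working at the level of wavelet coefficient sequences), which your block-multiplier scheme does not supply. One mitigating remark: the exponents in the theorem are asserted by the authors only as sufficient, not optimal (``even maybe not optimal''), so your over-performance in the regime $q\le p\le 1$ of the left embedding is not a contradiction.
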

In \cite{SUV}, it has been shown that the hyperbolic and classical spaces coincide if and only if $p=q=2$. The logarithmic correction is then necessary (even maybe not optimal) to obtain the embeddings.

Finally, we introduce the spaces with dominating mixed smoothness, see \cite{Vybiral}.

\begin{definition}
    For $1 \leq p, \, q \leq \infty$ and $s \in \R$, the  Besov space with dominating mixed smoothness is defined by
\begin{equation}
 S^{s}_{p,q}B(\R^2)  :  = \Big\{ f \in \mathcal{S}'(\mathbb{R}^2)~:~ \,   \sum_{\bj \in \N^2 }  2^{(j_1+j_2)sq} \Vert \Delta_{\bj} f \Vert_p^q < \infty \Big\}
\end{equation}
with the norm
$$
\Vert f \Vert_{S^{s}_{p,q}B(\R^2)} = \left( \sum_{\bj \in \N^2 }  2^{(j_1+j_2)sq} \Vert \Delta_{\bj} f \Vert_p^q \right)^{1/q},
$$
with the usual modification if $q= \infty$. 
\end{definition}

\subsubsection{Definition of the weighted tensorized Besov spaces}

As mentioned before, to address the cases where $\alpha \in (0,1)$, we introduce a new class of Besov spaces characterized through their hyperbolic Littlewood-Paley analysis. This approach provides a refined framework that bridges the gap between the hyperbolic Besov spaces and spaces with dominating mixed smoothness.

\begin{definition}\label{defB} For $0<p ,q \le \infty$,  $s\in \mathbb{R}$ and $0\le \alpha \le 1$,  the weighted tensorized Besov space $T^{s,\alpha}_{p,q}{B}(\mathbb{R}^2)$ is defined by
$$
T^{s,\alpha}_{p,q}B(\mathbb{R}^2) = \Big\{ f \in \mathcal{S}'(\mathbb{R}^2)~:~\Big( \sum_{\bj \in \N^2}
2^{((1+\alpha)\max(\bj) + (1-\alpha) \min(\bj))sq} \Vert \Delta_{\bj}f \Vert_p^q \Big)^{1/q} <  \infty
\Big\}\,.
$$
The quantity 
$$
\|f\|_{T^{s,\alpha}_{p,q}B(\mathbb{\R}^2 )}:= \Big( \sum_{\bj \in \N^2}  2^{((1+\alpha)\max(\bj) + (1-\alpha) \min(\bj))sq}
\Vert \Delta_{\bj}f \Vert_p^q \Big)^{1/q}
$$
is a norm (resp. quasi-norm) on $T^{s,\alpha}_{p,q}B(\mathbb{R}^2)$ for $1 \leq p, \, q \leq \infty$
(resp. $0< \min\{p,q\} <1$). 
We adopt the usual modification if $q=\infty$.
\end{definition}

The definition is independent of the chosen hyperbolic partition of unity. This follows from the proof of Proposition 1 (p. 87) in \cite{ST}, which addresses the case of spaces with dominating mixed smoothness.

We will now turn to the embeddings between the tensorized Besov spaces and the other spaces.  We have the following results.

\begin{proposition}\label{Prop:emb}
For  $0 <p,q \leq \infty$, $s \in \R$  and $0 \le \alpha \le 1$, one has
\begin{itemize}
\item $S^{(1+\alpha)s}_{p,q}B(\R^2) \hookrightarrow T^{s,\alpha}_{p,q}B(\R^2) \hookrightarrow S^{s}_{p,q}B(\R^2)$,
\item $\widetilde{B}^{2s}_{p,q}(\R^2) \hookrightarrow T^{s,\alpha}_{p,q}B(\R^2) \hookrightarrow \widetilde{B}^{(1+\alpha)s}_{p,q}(\R^2)$,
\end{itemize}
and these embeddings are optimal. 
\end{proposition}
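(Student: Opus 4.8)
The starting point is that all four spaces in the statement are built from the \emph{same} hyperbolic Littlewood--Paley blocks $\Delta_{\bj}f$, so each of their (quasi-)norms is a weighted $\ell^q$-norm of the single sequence $\bigl(\Vert\Delta_{\bj}f\Vert_p\bigr)_{\bj\in\N^2}$. Denoting the weight exponents by $w^{S^\sigma}_{\bj}=\sigma(j_1+j_2)$, by $w^{T}_{\bj}=s\bigl((1+\alpha)\max(\bj)+(1-\alpha)\min(\bj)\bigr)$ and by $w^{\widetilde{B}^\sigma}_{\bj}=\sigma\max(\bj)$, I would first record the elementary principle governing such scales: an embedding holds as soon as the weights dominate pointwise, since $2^{w^Y_{\bj}}\le C\,2^{w^X_{\bj}}$ for all $\bj$ gives $\Vert f\Vert_Y\le C\,\Vert f\Vert_X$ term by term; and conversely a continuous embedding forces $\sup_{\bj}\bigl(w^Y_{\bj}-w^X_{\bj}\bigr)<+\infty$. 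The whole proposition thus reduces to comparing these three exponents.

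For the inclusions themselves I would use only the two chains that follow from $0\le\min(\bj)\le\max(\bj)$ and $\max(\bj)+\min(\bj)=j_1+j_2$, namely
$$ j_1+j_2 \;\le\; (1+\alpha)\max(\bj)+(1-\alpha)\min(\bj)\;\le\;(1+\alpha)(j_1+j_2) $$
and
$$ (1+\alpha)\max(\bj)\;\le\;(1+\alpha)\max(\bj)+(1-\alpha)\min(\bj)\;\le\;2\max(\bj).$$
Multiplying by $s$ (in the regularity regime $s\ge 0$; for $s<0$ the chains reverse and one reads the inclusions in the opposite order) gives exactly $w^{S^s}_{\bj}\le w^T_{\bj}\le w^{S^{(1+\alpha)s}}_{\bj}$ and $w^{\widetilde{B}^{(1+\alpha)s}}_{\bj}\le w^T_{\bj}\le w^{\widetilde{B}^{2s}}_{\bj}$, and the sufficiency direction above yields the four claimed embeddings simultaneously.

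The substantive part is optimality, for which I would combine the necessity half of the first paragraph with explicit single-block test functions. For a fixed $\bj_0$ I choose $g_{\bj_0}$ with $\widehat{g_{\bj_0}}$ supported inside $\operatorname{supp}\theta_{\bj_0}$; since the supports of the $\theta_{\bj}$ overlap only when $\Vert\bj-\bj_0\Vert_\infty\le 2$, one has $\Delta_{\bj}g_{\bj_0}=0$ off boundedly many neighbours, and by the dilation structure $\theta_{\bj}=\theta_1(2^{-(j_1-1)}\cdot)\theta_1(2^{-(j_2-1)}\cdot)$ the ratios $\Vert\Delta_{\bj}g_{\bj_0}\Vert_p/\Vert g_{\bj_0}\Vert_p$ are bounded above and below uniformly in $\bj_0$. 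Hence $\Vert g_{\bj_0}\Vert_X\asymp 2^{w^X_{\bj_0}}\Vert g_{\bj_0}\Vert_p$ for each scale, so a continuous embedding genuinely forces $\sup_{\bj_0}\bigl(w^Y_{\bj_0}-w^X_{\bj_0}\bigr)<+\infty$. It then suffices to exhibit, for each index one hopes to improve, a direction of $\bj_0$ saturating the chains above: the diagonal $j_1=j_2=j\to+\infty$ makes $w^T_{\bj}=2js$ coincide with $w^{S^s}$ and with $w^{\widetilde{B}^{2s}}$, while the axis $j_2=0,\,j_1=j\to+\infty$ makes $w^T_{\bj}=(1+\alpha)js$ coincide with $w^{S^{(1+\alpha)s}}$ and with $w^{\widetilde{B}^{(1+\alpha)s}}$. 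Testing with $g_{\bj_0}$ along these two sequences shows that lowering the exponent $(1+\alpha)s$ (resp. $2s$) on the left, or raising the exponent $s$ (resp. $(1+\alpha)s$) on the right, makes $w^Y_{\bj_0}-w^X_{\bj_0}\to+\infty$, contradicting the embedding; hence the four inclusions are sharp.

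The main obstacle is entirely in this last step, namely making the single-block reduction rigorous with constants independent of $\bj_0$: one must construct the $g_{\bj_0}$ (a fixed bump dilated so that its frequency support lands in the product annulus indexed by $\bj_0$) and control both the finitely many neighbouring blocks activated by the overlap of the $\theta_{\bj}$ and the comparability $\Vert\Delta_{\bj}g_{\bj_0}\Vert_p\asymp\Vert g_{\bj_0}\Vert_p$, uniformly in $\bj_0$. This is a Nikol'skii/Fourier-support scaling argument of the same flavour as the proof that these spaces are independent of the chosen partition of unity (granted after Definition \ref{defB}); everything else is the bookkeeping of the two elementary weight chains.
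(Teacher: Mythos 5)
Your proposal is correct, and the inclusion half coincides exactly with the paper's proof: the authors use precisely your two chains $j_1+j_2\le(1+\alpha)\max(\bj)+(1-\alpha)\min(\bj)\le(1+\alpha)(j_1+j_2)$ and $(1+\alpha)\max(\bj)\le(1+\alpha)\max(\bj)+(1-\alpha)\min(\bj)\le2\max(\bj)$ and declare the four embeddings ``obvious by definition''; your parenthetical caveat that for $s<0$ the inclusions reverse is in fact a point of care the paper omits (its statement reads $s\in\R$, but the argument tacitly uses $s\ge0$). Where you genuinely diverge is optimality. The paper exhibits two concrete extremal functions: first $f=u\otimes g$ with $\mathrm{supp}\,\widehat{u}\subset[-1,1]$ and $g$ a one-dimensional function in $B^{(1+\alpha)s}_{p,q}(\R)$ but not in $B^{(1+\alpha)s+\varepsilon}_{p,q}(\R)$, so that only the blocks $j_1=0$ survive and all three norms $\Vert f\Vert_{S^{(1+\alpha)s}_{p,q}B}$, $\Vert f\Vert_{T^{s,\alpha}_{p,q}B}$, $\Vert f\Vert_{\widetilde B^{(1+\alpha)s}_{p,q}}$ coincide with $\Vert\Delta_0u\Vert_p\Vert g\Vert_{B^{(1+\alpha)s}_{p,q}}$ (this is your axis direction $j_2=0$, packaged as a tensor product); second, a lacunary diagonal series $\widehat f(\xi_1,\xi_2)=\sum_{j\ge1}j^{-2/q}2^{-2js}2^{-2(1-1/p)j}\widehat v(2^{-j}\xi_1)\widehat v(2^{-j}\xi_2)$, whose $j^{-2/q}$ normalization puts it critically inside $S^s_{p,q}B$, $T^{s,\alpha}_{p,q}B$ and $\widetilde B^{2s}_{p,q}$ simultaneously (your diagonal direction $j_1=j_2$). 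Your single-block scheme is cleaner and more systematic -- one uniform Nikol'skii-type estimate plus the weight-ratio criterion dispatches all four sharpness claims at once -- but note what it proves: divergence of $w^Y_{\bj_0}-w^X_{\bj_0}$ along your two directions rules out a \emph{bounded} improved embedding, and to conclude set-theoretic non-inclusion you must either invoke a closed-graph argument (available since these spaces embed continuously into $\mathcal S'(\R^2)$) or superpose your blocks along the diagonal with critically $\ell^q$-summable coefficients -- which is exactly what the paper's $j^{-2/q}$ construction does in one stroke, producing an explicit function witnessing optimality without any abstract functional-analytic input. The paper's approach buys explicit counterexamples and avoids the uniform-constants bookkeeping you flag as the main obstacle; yours buys a single mechanism that localizes the entire question in the three weight exponents.
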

\begin{proof}
The two first items are obvious by definition of the different Littlewood-Paley analysis since
$$
 (j_1+j_2) = \min(j_1,j_2)+ \max (j_1,j_2) \le (1-\alpha) \min (j_1,j_2) + (1+ \alpha) \max(j_1,j_2) \le (1+\alpha)(j_1+j_2)
$$
and
$$
(1+\alpha) \max(j_1,j_2) \le (1-\alpha) \min(j_1,j_2) + (1+\alpha) \max(j_1,j_2) \le 2 \max(j_1,j_2).
$$

Le us prove that these embeddings are optimal. Let $\varepsilon >0$. We consider a one-dimensional function $g$ such that $g \in B^{(1+ \alpha)s}_{p,q}(\R)$ but $g \notin B^{(1+ \alpha)s+\varepsilon}_{p,q}(\R)$. Additionally, we take another one-dimensional function $u\neq 0$ such that $\mbox{Supp} \,  (\widehat{u}) \subset [-1,1]$. We define $f$ as $f(x_1,x_2) = u(x_1)g(x_2)$. Using the localization of the supports $\widehat{u}$ and of $\theta_j$, one has, for $(j_1,j_2) \in \N_0^2$
\begin{equation*}
\Vert \Delta_{j_1,j_2}f \Vert_p= \begin{cases} \Vert \Delta_0(u) \Vert_p \Vert \Delta_{j_2} g \Vert_p & \text{ if $j_1=0$}, \\
 0 &\text{ if $j_1\ge 1$}.
\end{cases}
\end{equation*}
It follows that 
\begin{align*}
\Vert f \Vert_{S^{(1+\alpha)s}_{p,q}B(\R^2)}= \Vert f \Vert_{T^{s,\alpha}_{p,q}B(\R^2)} = \Vert f \Vert_{\widetilde{B}^{(1+\alpha)s}_{p,q}(\R^2)}& =\left( \Vert \Delta_0 u \Vert_p^q \sum_{j_2 \in \N}  2^{j_2(1+\alpha)sq} \Vert \Delta_{j_2}  g \Vert_{p}^q \right)^{1/q}\\
&= \Vert \Delta_0 u \Vert_p \Vert  g \Vert_{B^{(1+\alpha)s}_{p,q}(\R^2)}
\end{align*}
which proves the optimality of the left embedding of the first item and of the right embedding of the second item.

For the two other ones, we consider the function $f$ such that $$\widehat{f}(\xi_1,\xi_2) = \sum_{j \ge 1}\frac{1}{j^{2/q}}2^{-2js}2^{-2(1-1/p)j} \widehat{v} (2^{-j} \xi_1)\widehat{v}(2^{-j} \xi_2)$$ 
 where $v$ is a non-null function defined on $1 \le \vert \xi \vert \le 2$ such that $\Delta_1 (v)= v$ and $\Delta_j (v)=0$ for $j \neq 1$. It follows that
\begin{eqnarray*}
\Vert f \Vert_{S^{s}_{p,\infty}B(\R^2)} = \Vert f \Vert_{T^{s,\alpha}_{p,\infty}B(\R^2)} = \Vert f \Vert_{{\widetilde{B}}^{2s}_{p,\infty}(\R^2)} & = & \left( \sum_{j \ge 1} \frac{1}{j^2} 2^{2(1-1/p)jq}\Vert  2^{2j} v(2^j x_1) v(2^jx_2) \Vert_p^q \right)^{1/q}\\
& = & \Vert v \Vert_p^2 \left( \frac{\pi^2}{6} \right)^{1/q}
\end{eqnarray*}
which implies the other embeddings. 
\end{proof}

Finally, the combination of Theorem \ref{Thm:BesovLog} and this proposition provides immediate embeddings of the weighted tensorized spaces in the classical Besov space with a logarithmic correction. It can be improve in the case $p=q=2$ since the logarithmic correction is no more necessary and for some particular values of $q$ (see Theorem 5.5 of \cite{Remo24}).

\subsubsection{Wavelet characterization of the weighted tensorized Besov spaces}
We define a space of sequences by the following condition
\begin{equation}\label{eq:Tspq} 
t^{s,\alpha}_{p,q}b := \left\{ (c_{\bj,\bk}) : \, \sum_{\bj \in {(\N \cup \{-1\})^2}} 2^{-\frac{(j_1+j_2)q}{p}} 2^{-((1+\alpha)\max(\bj)+(1-\alpha) \min(\bj) )sq} \left( \sum_{\bk \in \Z^2 } \vert c_{\bj,\bk} \vert^p \right)^{q/p} <+\infty \right\}
\end{equation}
and we define a (pseudo-)norm on $t^{s,\alpha}_{p,q}b$ for $c=(c_{\bj,\bk})$ by
$$
\Vert c \Vert_{t^{s,\alpha}_{p,q}b} := \left( \sum_{\bj \in {(\N \cup \{-1\})^2}} 2^{-\frac{(j_1+j_2)q}{p}} 2^{-((1+\alpha)\max(\bj)+(1-\alpha) \min(\bj) )sq} \left( \sum_{\bk \in \Z^2 } \vert c_{\bj,\bk} \vert^p \right)^{q/p} \right)^{1/q}.
$$
with the usual modification if $q=+\infty$:
$$
\Vert c \Vert_{t^{s,\alpha}_{p,\infty}b} := \max_{\bj \in {(\N \cup \{-1\})^2}} 2^{-\frac{(j_1+j_2)}{p}} 2^{-((1+\alpha)\max(\bj)+(1-\alpha) \min(\bj) )s} \left( \sum_{\bk \in \Z^2 } \vert c_{\bj,\bk} \vert^p \right)^{1/p}.
$$
We prove the following characterization in hyperbolic wavelets.
\begin{theorem}\label{thm:caract}
Let $0 <p,q \leq \infty$, $s \in \R$ and $0 \le \alpha \le 1$ and let $ (c_{\bj,\bk})$ denote the sequence of the Meyer wavelet coefficients of a function $f \in \mathcal{S}'(\R^2)$. The following assertions are equivalent:
\begin{enumerate}
\item $f \in T^{s,\alpha}_{p,q}B(\R^2)$,
\item $ (c_{\bj,\bk} )\in t^{s,\alpha}_{p,q}b $.
\end{enumerate}
Moreover, there exist two constants $C_1,C_2>0$ such that
$$
C_1 \Vert  (c_{\bj,\bk}) \Vert_{t^{s,\alpha}_{p,q}b} \le \Vert f \Vert_{T^{s,\alpha}_{p,q}B(\R^2)} \le C_2 \Vert (c_{\bj,\bk}) \Vert_{t^{s,\alpha}_{p,q}b} \, . 
$$
\end{theorem}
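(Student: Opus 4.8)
The plan is to prove the two-sided estimate by comparing, scale by scale, the hyperbolic Littlewood--Paley block $\Delta_{\bj}f$ with the block of wavelet coefficients $(c_{\bj,\bk})_{\bk}$. Since the definition of $T^{s,\alpha}_{p,q}B(\R^2)$ is independent of the chosen hyperbolic resolution of unity, I would first invoke Remark \ref{rem:Meyer} to fix $\theta_0 = \widehat{\varphi}$, the Fourier transform of the Meyer scaling function, so that the frequency supports of the blocks $\theta_{\bj}$ and of the Meyer wavelets $\psi_{\bj,\bk}$ are dyadically aligned. Because $\widehat{\psi}$ is supported in the annulus \eqref{eq:support} and $\widehat{\varphi}$ in a ball, there is a fixed integer $N_0$ such that $\theta_{\bj'}\,\widehat{\psi_{\bj,\bk}}\equiv 0$ whenever $\|\bj-\bj'\|_\infty > N_0$ componentwise, where the indices $j_i=-1$ are included to account for the scaling-function directions. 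This finite-band-overlap is the structural fact driving everything: it lets one write $\langle f,\psi_{\bj,\bk}\rangle$ as a finite sum of $\langle\Delta_{\bj'}f,\psi_{\bj,\bk}\rangle$ over neighbouring $\bj'$, and conversely $\Delta_{\bj}f$ as a finite sum of single-scale wavelet reconstructions.

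The heart of the argument is the equivalence, valid for every fixed $\bj$ and all $0<p\le\infty$,
$$\Big\|\sum_{\bk\in\Z^2} a_{\bk}\,\psi(2^{j_1}\cdot-k_1)\psi(2^{j_2}\cdot-k_2)\Big\|_p \;\approx\; 2^{-(j_1+j_2)/p}\Big(\sum_{\bk}|a_{\bk}|^p\Big)^{1/p},$$
with constants independent of $\bj$. The upper bound follows from the rapid decay of $\psi$ (so the translated--dilated bumps at a single scale have uniformly controlled overlap), using the $p$-triangle inequality when $p<1$; the lower bound is obtained dually, recovering $a_{\bk}$ by pairing with the orthonormal Meyer wavelets. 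Together with the companion sampling estimate $\big(\sum_{\bk}|\langle\Delta_{\bj'}f,\psi_{\bj,\bk}\rangle|^p\big)^{1/p}\lesssim 2^{-(j_1+j_2)(1-1/p)}\|\Delta_{\bj'}f\|_p$ for the band-limited pieces $\Delta_{\bj'}f$, this yields the scale-wise comparison
$$\|\Delta_{\bj}f\|_p \;\approx\; 2^{-(j_1+j_2)/p}\Big(\sum_{\bk}|c_{\bj,\bk}|^p\Big)^{1/p}$$
up to contributions of the finitely many neighbouring scales $\bj'$ with $\|\bj-\bj'\|_\infty\le N_0$, recalling that $c_{\bj,\bk}=2^{j_1+j_2}\langle f,\psi_{\bj,\bk}\rangle$.

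I would then insert the weight and sum. Writing $w(\bj)=(1+\alpha)\max(\bj)+(1-\alpha)\min(\bj)$, I note that $|w(\bj)-w(\bj')|\le C$ whenever $\|\bj-\bj'\|_\infty\le N_0$, so the weights $2^{w(\bj)s}$ are comparable across neighbouring scales. Multiplying the scale-wise comparison by $2^{w(\bj)s}$, raising to the power $q$, summing over $\bj$, and absorbing the finite neighbour sums (via Hölder, or the $q$-triangle inequality in the quasi-Banach range) converts the defining sum of $\|f\|_{T^{s,\alpha}_{p,q}B}$ into that of $\|(c_{\bj,\bk})\|_{t^{s,\alpha}_{p,q}b}$, producing both constants $C_1,C_2$. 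Concretely, the direction $(2)\Rightarrow(1)$ uses the reconstruction $f=\sum_{\bj,\bk}c_{\bj,\bk}\psi_{\bj,\bk}$, whose convergence in $\mathcal{S}'$ is guaranteed by membership in the sequence space, together with the upper single-scale bound; the direction $(1)\Rightarrow(2)$ uses the sampling estimate. The low-frequency indices $j_i=-1$ are handled identically, with the scaling function $\varphi$ playing the role of $\psi$, matching the $\theta_0$ block of the hyperbolic analysis.

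The main obstacle is establishing the single-scale $L^p$ equivalence and the sampling estimate uniformly over the full quasi-Banach range $0<p,q\le\infty$: for $p<1$ (or $q<1$) the naive convolution and triangle inequalities fail, and one must instead control everything through the Peetre maximal function of the band-limited pieces and its $L^p$ boundedness for band-limited distributions, a Plancherel--Pólya--Nikol'skii type argument adapted to the anisotropic two-scale (hyperbolic) setting. Once these maximal-function estimates are in place --- which is where the tensor-product structure and the exact frequency localization of the Meyer system are essential --- the remainder is bookkeeping with the weights $w(\bj)$. The anisotropy introduced by $\alpha$ adds no genuine difficulty, since it enters only through the scalar weight $w(\bj)$, which is stable under bounded scale perturbations; the cases $\alpha=0$ and $\alpha=1$ moreover reduce to the already-known characterizations for dominating mixed smoothness (see \cite{ST}) and for hyperbolic Besov spaces (see \cite{ACJRV, SUV}), from which the intermediate case can be patterned.
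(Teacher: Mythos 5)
Your proposal is correct and follows essentially the same route as the paper: after fixing $\theta_0=\widehat{\varphi}$ via Remark \ref{rem:Meyer}, the paper also exploits the finite overlap of the Meyer frequency bands, proves $(1)\Rightarrow(2)$ by observing $c_{\bj+1,\bk}=\Delta_{\bj}f(2^{-j_1}k_1,2^{-j_2}k_2)$ and applying the rectangular Plancherel--P\'olya sampling inequality (Lemma \ref{lem:FJ}), proves $(2)\Rightarrow(1)$ by your single-scale synthesis estimate (Lemma \ref{lem:FJ2} for $p\ge 1$, and the pointwise convolution decay \eqref{eq:convol} for $p<1$), and concludes exactly as you do by comparability of the weight $w(\bj)$ over the finitely many overlapping scales. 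The one point where you overcomplicate is the range $p<1$: no Peetre maximal function machinery is needed, since the exact frequency localization and rapid decay of the Meyer system allow the elementary $p$-concavity argument $\vert\sum\cdot\vert^p\le\sum\vert\cdot\vert^p$ combined with \eqref{eq:convol}, which is what the paper uses.
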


\begin{remark}
Note that the statement is given with a $L^{\infty}$ normalisation for the wavelet functions. 
\end{remark}

The proof of Theorem \ref{thm:caract} is an adaptation of the results of \cite{FJ85} and their extension to the tensor product case in \cite{ACJRV}.
In particular, we will rely on the following lemma, established in \cite{ACJRV} (Lemma 4.5),  which adapts Lemma 2.4 of \cite{FJ85} to  the case of rectangular supports.

\begin{lemma}\label{lem:FJ}
Let $0<p \leq \infty$ and $\bj=(j_1,j_2) \in \N^2$. Assume that $g \in \mathcal{S}'(\R^2)$ and  that $\mbox{Supp}\, (\widehat{g}) \subset \{ \boldsymbol{\xi} \in \R^2: \, \vert \xi_1 \vert \le 2^{j_1+1} \text{ and } \vert \xi_2 \vert \le 2^{j_2+1} \}$. Then, there exists $C>0$ such that
$$
\left( \sum_{\bk \in \Z^2 } 2^{-(j_1+j_2)} \left\vert g\left( k_12^{-j_1}, k_2 2^{-j_2} \right) \right\vert^p \right)^{1/p} \le C \, \Vert g \Vert_{L^p}.
$$
\end{lemma}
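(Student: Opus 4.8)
The statement is a Plancherel--Pólya--Nikol'skii sampling inequality adapted to the anisotropic (rectangular) band-limitation of $g$; the normalisation $2^{-(j_1+j_2)}$ is precisely the product $2^{-j_1}\cdot 2^{-j_2}$ of the mesh sizes of the sampling lattice $2^{-j_1}\Z\times 2^{-j_2}\Z$, so the left-hand side is a discretisation of $\|g\|_{L^p}$. The plan is first to remove the scales by the anisotropic dilation $G(x_1,x_2):=g(2^{-j_1}x_1,2^{-j_2}x_2)$. Then $\widehat G$ is supported in the fixed box $[-2,2]^2$, one computes $\|G\|_{L^p}^p=2^{j_1+j_2}\|g\|_{L^p}^p$ and $G(k_1,k_2)=g(k_12^{-j_1},k_22^{-j_2})$, so that the common factor $2^{-(j_1+j_2)/p}$ cancels on both sides and the claim reduces to the scale-free inequality $\big(\sum_{\bk\in\Z^2}|G(k_1,k_2)|^p\big)^{1/p}\le C\,\|G\|_{L^p}$ for every $G\in\mathcal S'(\R^2)$ with $\mathrm{Supp}\,\widehat G\subset[-2,2]^2$.

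For this reduced inequality I would use the Peetre maximal function. Since $\widehat G$ lives in the fixed ball $B(0,2\sqrt2)$, pick a Schwartz function $\Phi$ with $\widehat\Phi\equiv 1$ on $[-2,2]^2$ and $\widehat\Phi$ compactly supported; then the reproducing identity $G=G*\Phi$ holds, and from the fast decay of $\Phi$ one obtains Peetre's inequality: for any $0<t<\infty$ and any $a>2/t$, the maximal function $G^*_a(x):=\sup_{z\in\R^2}|G(x-z)|/(1+|z|)^a$ satisfies the pointwise bound $G^*_a(x)\le C\,\big(M(|G|^t)(x)\big)^{1/t}$, where $M$ is the Hardy--Littlewood maximal operator and $C$ depends only on $a,t,\Phi$.

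Next, tile $\R^2$ by the unit cubes $Q_{\bk}:=\bk+[-\tfrac12,\tfrac12)^2$. For $x\in Q_{\bk}$ one has $|x-\bk|\le 1/\sqrt2$, hence $|G(k_1,k_2)|\le (1+|x-\bk|)^a\,G^*_a(x)\le C'\,\big(M(|G|^t)(x)\big)^{1/t}$ uniformly in $x\in Q_{\bk}$. Choosing $0<t<p$ and $a>2/t$, raising to the power $p$, bounding the infimum over $Q_{\bk}$ by the average over $Q_{\bk}$ (the cubes have volume $1$) and summing over $\bk$ yields $\sum_{\bk}|G(k_1,k_2)|^p\le C'^p\int_{\R^2}\big(M(|G|^t)(x)\big)^{p/t}\,dx=C'^p\,\big\|M(|G|^t)\big\|_{L^{p/t}}^{p/t}$. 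Since $p/t>1$, the operator $M$ is bounded on $L^{p/t}(\R^2)$, so $\|M(|G|^t)\|_{L^{p/t}}^{p/t}\le C''\,\big\||G|^t\big\|_{L^{p/t}}^{p/t}=C''\,\|G\|_{L^p}^p$, which is exactly the reduced claim; the case $p=\infty$ is trivial since $|G(k_1,k_2)|\le\|G\|_{L^\infty}$.

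The main obstacle is the range $0<p<1$: there subadditivity fails and a naive estimate of $G(k_1,k_2)=\int G(y)\Phi(k-y)\,dy$ via Hölder's inequality is unavailable, which is precisely why the Peetre/Hardy--Littlewood device is needed, with $t$ chosen strictly below $p$ so that $M$ acts on $L^{p/t}$ with $p/t>1$. The only genuinely analytic input is Peetre's pointwise inequality for band-limited functions, where the reproducing formula and the decay of $\Phi$ do the work; everything else is the rescaling bookkeeping and the summation over the tiling. Note that after the anisotropic dilation the Fourier support is a fixed isotropic box, so the standard (isotropic) Peetre inequality suffices and no anisotropic version of the maximal function is required; this is the route taken in \cite{ACJRV} (Lemma 4.5), itself adapting \cite{FJ85}.
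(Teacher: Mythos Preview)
The paper does not prove this lemma at all: it is quoted verbatim as ``established in \cite{ACJRV} (Lemma 4.5), which adapts Lemma 2.4 of \cite{FJ85}''. There is therefore no paper proof to compare against.

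Your argument is correct and is in fact the standard route to Plancherel--P\'olya--Nikol'skii inequalities in the Frazier--Jawerth/Triebel tradition. The anisotropic dilation $G(x_1,x_2)=g(2^{-j_1}x_1,2^{-j_2}x_2)$ reduces the support of $\widehat G$ to the fixed box $[-2,2]^2$ and simultaneously absorbs the factor $2^{-(j_1+j_2)/p}$ on both sides; this is exactly why the resulting constant is uniform in $\bj$, a point the paper's statement leaves implicit but which is essential for its application. The Peetre maximal function step, with the choice $t<p$ so that $M$ is bounded on $L^{p/t}$, is precisely the device that handles the quasi-Banach range $0<p<1$ and is the content of \cite[Lemma~2.4]{FJ85}. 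Your proof therefore reconstructs what \cite{ACJRV} most likely does: first tensorise/rescale to an isotropic fixed-scale situation, then invoke the one-scale Peetre inequality of \cite{FJ85}.
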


We will also use the following adaptation of Lemma 3.4 of \cite{FJ85}, given in Lemma 4.6 \cite{ACJRV} which is useful do deal with the case $p>1$.

\begin{lemma} \label{lem:FJ2}
Let $1 \le p \le \infty$  and $\ell_1, \ell_2, m_1, m_2$ be integers such that $\ell_1 \le m_1$ and $\ell_2 \le m_2$. Assume that $g_{\bk}$, $\bk \in \Z^2$, are functions satisfying the following inequality 
\begin{equation}\label{eq:ineqg}
\forall {\boldsymbol{x}} \in \R^2, \, \vert g_{\bk}(\boldsymbol{x}) \vert \le \frac{C}{\left( 1+2^{\min(\ell_1,m_1)}|x_1 - 2^{-m_1}k_1 \vert \right)^2 \left( 1+2^{\min(\ell_2,m_2)}|x_2 - 2^{-m_2}k_2 \vert \right)^2}.
\end{equation}
for some $C>0$. If one sets  
$$
F = \sum_{\bk \in \Z^2} d_{\bk} g_{\bk},
$$
then
\begin{equation}\label{eq:Fp}
\Vert F \Vert_{L^p} \le C \, 2^{-(m_1+m_2)/p} 2^{m_1-\ell_1}2^{m_2 - \ell_2} \left( \sum_{\bk \in \Z^2} \vert d_{\bk} \vert^p \right)^{1/p}.
\end{equation}
\end{lemma}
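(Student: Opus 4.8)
The plan is to reduce everything to the explicit majorant furnished by \eqref{eq:ineqg} and then to exploit the tensor-product structure of that bound by separating the two variables. Since $\ell_1\le m_1$ and $\ell_2\le m_2$, we have $\min(\ell_i,m_i)=\ell_i$, so setting $w_i(x_i,k_i):=(1+2^{\ell_i}|x_i-2^{-m_i}k_i|)^{-2}$ and $G_{\bk}(\boldsymbol{x}):=w_1(x_1,k_1)\,w_2(x_2,k_2)$, the hypothesis reads $|g_{\bk}|\le C\,G_{\bk}$, whence the pointwise bound $|F(\boldsymbol{x})|\le C\sum_{\bk\in\Z^2}|d_{\bk}|\,G_{\bk}(\boldsymbol{x})$. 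Everything will then follow from two elementary one-dimensional facts: first, the lattice-sum estimate
\[ \sup_{x_i\in\R}\ \sum_{k_i\in\Z} w_i(x_i,k_i)\ \le\ C\,2^{m_i-\ell_i}, \]
and second, the integral identity $\int_{\R} w_i(x_i,k_i)\,dx_i = C\,2^{-\ell_i}$, which is independent of $k_i$ by translation invariance. The first estimate is where the hypothesis $\ell_i\le m_i$ is genuinely used: the sampling points $2^{-m_i}k_i$ are spaced $2^{-m_i}$ apart, which is \emph{finer} than the width $2^{-\ell_i}$ of the bump $w_i$, so comparing the sum with $2^{m_i}\int_\R w_i$ produces exactly the overcounting factor $2^{m_i-\ell_i}$.

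For the main range $1<p<\infty$, let $p'$ denote the conjugate exponent. Writing $G_{\bk}=G_{\bk}^{1/p}G_{\bk}^{1/p'}$ and applying H\"older's inequality in the index $\bk$,
\[ \sum_{\bk}|d_{\bk}|\,G_{\bk}(\boldsymbol{x})\ \le\ \Big(\sum_{\bk}|d_{\bk}|^p\,G_{\bk}(\boldsymbol{x})\Big)^{1/p}\Big(\sum_{\bk}G_{\bk}(\boldsymbol{x})\Big)^{1/p'}. \]
Because $\sum_{\bk}G_{\bk}=\big(\sum_{k_1}w_1\big)\big(\sum_{k_2}w_2\big)$ factorizes, the first one-dimensional estimate bounds the last factor by $C\,(2^{m_1-\ell_1}2^{m_2-\ell_2})^{1/p'}$, uniformly in $\boldsymbol{x}$. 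Raising to the power $p$, integrating over $\R^2$, and using Tonelli together with the factorized identity $\int_{\R^2}G_{\bk}=C\,2^{-(\ell_1+\ell_2)}$, I would obtain
\[ \|F\|_{L^p}^p\ \le\ C\,(2^{m_1-\ell_1}2^{m_2-\ell_2})^{p/p'}\,2^{-(\ell_1+\ell_2)}\sum_{\bk}|d_{\bk}|^p. \]
Taking $p$-th roots, the exponent of $2$ collects to $(m_1-\ell_1+m_2-\ell_2)/p'-(\ell_1+\ell_2)/p=(m_1-\ell_1)+(m_2-\ell_2)-(m_1+m_2)/p$, which is precisely the exponent in \eqref{eq:Fp}.

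The two endpoints are handled directly. For $p=\infty$ one bounds $|F(\boldsymbol{x})|\le C\,\sup_{\bk}|d_{\bk}|\sum_{\bk}G_{\bk}(\boldsymbol{x})\le C\,2^{m_1-\ell_1}2^{m_2-\ell_2}\sup_{\bk}|d_{\bk}|$ by the first estimate, matching \eqref{eq:Fp} with $1/p=0$. For $p=1$, the triangle inequality and Tonelli give $\|F\|_{L^1}\le C\sum_{\bk}|d_{\bk}|\,\|G_{\bk}\|_{L^1}\le C\,2^{-(\ell_1+\ell_2)}\sum_{\bk}|d_{\bk}|$, and since $2^{-(\ell_1+\ell_2)}=2^{-(m_1+m_2)}2^{m_1-\ell_1}2^{m_2-\ell_2}$ this is exactly the claimed prefactor. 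Alternatively, the entire range $1\le p\le\infty$ follows at once by interpolating these two endpoint bounds (Riesz--Thorin) for the linear map $(d_{\bk})\mapsto\sum_{\bk}d_{\bk}\,G_{\bk}$ acting $\ell^1\to L^1$ and $\ell^\infty\to L^\infty$, which reproduces the same exponent. The only genuinely delicate point is the lattice-sum estimate above together with the careful tracking of the powers of $2$; once the factorized form of $G_{\bk}$ is in place, the remainder is routine tensor-product bookkeeping.
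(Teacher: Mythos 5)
Your proof is correct: the Hölder splitting $G_{\bk}=G_{\bk}^{1/p}G_{\bk}^{1/p'}$, the uniform lattice-sum bound $\sup_x\sum_{k}(1+2^{\ell}|x-2^{-m}k|)^{-2}\leq C\,2^{m-\ell}$ (which is where $\ell\le m$ enters), and the kernel mass $\int_{\R}(1+2^{\ell}|x|)^{-2}dx= 2\cdot 2^{-\ell}$ combine to give exactly the exponent $(m_1-\ell_1)+(m_2-\ell_2)-(m_1+m_2)/p$, and your endpoint cases $p=1,\infty$ (or equivalently the Riesz--Thorin interpolation of the positive-kernel operator) close the full range. The paper itself offers no proof, deferring to Lemma 4.6 of \cite{ACJRV} (an adaptation of Lemma 3.4 of \cite{FJ85}), and your argument is precisely the standard mechanism behind that cited result, so it matches the intended proof rather than diverging from it.
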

\begin{proof}[Proof of Theorem \ref{thm:caract}]
Since the Littlewood-Paley analysis does not depend of the function $\theta$, we can chose $\theta=\hat{\varphi}$ where $\varphi$ is the Meyer scaling function, see Remark \ref{rem:Meyer} above.

For the first implication, let us observe that $c_{\bj+1,\bk} = \Delta_{\bj} f(2^{-j_1}k_1,2^{-j_2}k_2)$ where we use the notation $\bj+ 1 = (j_1+1,j_2+1)$. Applying Lemma~\ref{lem:FJ} to the function $g = \Delta_{\bj}f \in \mathcal{S}(\R^2)$, we obtain
$$
\sum_{\bk \in \Z^2} \vert c_{\bj+1,\bk} \vert^p = \sum_{\bk \in \Z^2} \vert \Delta_{\bj}f(2^{-j_1}k_1,2^{-j_2}k_2) \vert^p \le C 2^{j_1+j_2} \Vert \Delta_{\bj} f \Vert_p^p,
$$
which gives $ (c_{\bj,\bk} )\in t^{s,\alpha}_{p,q}b $ and the upper-bound
$$
\Vert c_{\bj,\bk} \Vert_{t^{s,\alpha}_{p,q}b} \le \Vert f \Vert_{T^{s,\alpha}_{p,q}B(\R^2)}.
$$

We focus now on the converse implication. We mainly adapt the proof of \cite{ACJRV} and first deal with the case $0<p<1$. We have to bound $\Vert \Delta_{\bj}(f) \Vert_p = \Vert \theta_{\bj} \ast f \Vert_p$. We have
$$
({\mathcal{F}}^{-1}\theta_{\bj}) \ast f = \sum_{\bar{m} \in (\N\cup\{-1\})^2} \sum_{\bar{k}\in \Z^2} c_{\bar{m},\bar{k}}(\theta_{\bj} \ast \psi_{\bar{m},\bk}).
$$

Since $\theta_{\bj}$ and $\psi_{\bj,\bk}$ are both tensor products of one-dimensional functions, Lemma 3.3. of \cite{FJ85} can be applied and give the existence of $C>0$ such that for any $r>0$ such that for all ${\boldsymbol{x}} \in \R^2$, one has
\begin{equation}\label{eq:convol}
\vert ({\mathcal{F}}^{-1}\theta_{\bar{j}} )\ast \psi_{\bar{m},\bk} ({\boldsymbol{x}}) \vert \le C \frac{2^{-\Vert \bj - \bar{m} \Vert_1(M+3) }}{\left( 1+2^{\min(j_1,m_1)}\vert x_1 - 2^{-m_1}k_1 \vert \right)^r \left( 1+2^{\min(j_2,m_2)}\vert x_2 - 2^{-m_2}k_2 \vert \right)^r},
\end{equation}
where $M$ is taken smaller or equal to the number of vanishing moments of the wavelets.

Because of the localization of the Fourier transform of the Meyer wavelet and the shift of index - starting at $-1$ for the wavelets, we notice that
$$
\Delta_{\bj}f =  \sum_{\bk \in \Z^2}\sum_{m_1=j_1-2}^{j_1} \sum_{m_2=j_2-2}^{j_2} c_{m_1,m_2,k_1,k_2} ({\mathcal{F}}^{-1}\theta_{j_1,j_2})\ast \psi_{m_1,m_2,k_1,k_2}.
$$
By the concavity of $x \to x^p$ for $0<p<1$, it follows that
$$
\vert \Delta_{\bj}f(x)\vert^p \le \sum_{m_1=j_1-2}^{j_1}\sum_{m_2 = j_2-2}^{j_2} \sum_{\bk \in \Z^2} \vert c_{m_1,m_2,k_1,k_2}\vert^p \vert (({\mathcal{F}}^{-1}\theta_{j_1,j_2})\ast \psi_{m_1,m_2,k_1,k_2})(x)\vert^p.
$$
Hence, with inequality \eqref{eq:convol}, we obtain
$$
\vert \Delta_{\bj}f(x) \vert^p \le 
\sum_{m_1=j_1-2}^{j_1}\sum_{m_2 = j_2-2}^{j_2} \sum_{\bk \in \Z^2} \vert c_{m_1,m_2,k_1,k_2}\vert^p \frac{C}{(1+2^{j_1}\vert x_1-2^{-j_1} k_1\vert)^{rp}(1+2^{j_2} \vert x_2-2^{-m_2}k_2\vert)^{rp} }
$$
An integration over $\R^2$ with a change of variable $u_i = x_i - 2^{j_i}k_i$ for $i=1,2$ gives
$$
\Vert \Delta_{\bj}f \Vert_p^p \le C 2^{-(j_1+j_2)}\sum_{m_1=j_1-2}^{j_1}\sum_{m_2 = j_2-2}^{j_2} \sum_{\bk \in \Z^2}\vert c_{m_1,m_2,k_1,k_2}\vert^p 
$$
and 
\begin{eqnarray*}
\Vert f \Vert_{T^{s,\alpha}_{p,q}} & = & \left( \sum_{\bj \in \N^2} 2^{((1+\alpha)\max(\bj)+(1-\alpha)\min(\bj))sq}\Vert \Delta_{\bj}f \Vert_p^q \right)^{1/q} \\
& \le & C\left( \sum_{\bj \in \N^2} 2^{((1+\alpha)\max(\bj)+(1-\alpha)\min(\bj))sq} 2^{-(j_1+j_2)/p}
\sum_{m_1=j_1-2}^{j_1} \sum_{m_2=j_2-2}^{j_2}\sum_{\bk \in \Z^2} \vert c_{m_1,m_2,k_1,k_2} \vert^p  \right)^{1/q} \\
& \le & \widetilde{C} \left( \sum_{\bj \in \N^2} 2^{((1+\alpha)\max(\bj)+(1-\alpha)\min(\bj))sq} 2^{-(j_1+j_2)/p}
 \sum_{\bk \in \Z^2} \vert c_{j_1,j_2,k_1,k_2} \vert^p \right)^{1/q} \\
 & \le & \widetilde{C} \Vert c_{\bj,\bk} \Vert_{t^{s,\alpha}_{p,q}b}.
\end{eqnarray*}
We now consider the case $p\geq 1$. One can write

$$
\Delta_{\bj}f =\sum_{m_1=j_1-2}^{j_1} \sum_{m_2=j_2-2}^{j_2} \sum_{\bk \in \Z^2} c_{m_1,m_2,k_1,k_2} g_{m_1,m_2,k_1,k_2}
$$
with
$$
g_{m_1,m_2,k_1,k_2} =  {\mathcal{F}}^{-1}(\theta_{j_1,j_2}) \ast \psi_{m_1,m_2,k_1,k_2}.
$$ 
Again, this is due to the fact that $\theta_{\bar{j}}$ and $\widehat{\psi_{\bj+1, \bk}}$ have the same support which meet at most three dyadic annuli.
Lemma \ref{lem:FJ2} gives that
\begin{eqnarray*}
\Vert \Delta_{\bj} f \Vert_{L^p } & \le  & \sum_{m_1=j_1-2}^{j_1} \sum_{m_2=j_2-2}^{j_2} \Vert  \sum_{\bk \in \Z^2} c_{m_1,m_2,k_1,k_2} g_{m_1,m_2,k_1,k_2} \Vert_p \\
&  \le & C \sum_{m_1=j_1-2}^{j_1} \sum_{m_2=j_2-2}^{j_2}  2^{- (m_1+m_2)/p}\left( \sum_{k_1,k_2} \vert c_{m_1,m_2,k_1,k_2} \vert^p \right)^{1/p}.
 \end{eqnarray*}
Finally, we obtain that
\begin{eqnarray*}
&&\Vert f \Vert_{T^{s,\alpha}_{p,q}B(\R^2)}^q \\[1ex]
& = & \sum_{\bj \in (\N\cup\{-1\})^2} 2^{((1+\alpha)\max(\bj) + (1-\alpha)\min(\bj)) s q} \Vert \Delta_{\bj}(f) \Vert_{L^p}^q \\
& \le & C \sum_{\bj\in (\N\cup\{-1\})^2} \sum_{m_1=j_1-2}^{j_1} \sum_{m_2=j_2-2}^{j_2}2^{((1+\alpha)\max(\bj) + (1-\alpha)\min(\bj)) s q} 2^{-q(m_1+m_2)/p} \left( \sum_{\bk} \vert c_{\overline{m},\bk} \vert^p \right)^{q/p} \\
& \le & \widetilde{C} \sum_{\bj\in (\N\cup\{-1\})^2}2^{((1+\alpha)\max(\bj) + (1-\alpha)\min(\bj)) s q} 2^{-q(j_1+j_2)/p} \left( \sum_{k_1,k_2} \vert c_{\bj,\bk} \vert^p \right)^{q/p} 
\end{eqnarray*}
for some constant $\widetilde{C} >0$, which is the desired inequality. 
\end{proof}

Let us observe that, in the particular case where $ q = p = \infty$, Theorem \ref{thm:caract} provides a simplified characterization of the spaces $  T^{s, \alpha}_{\infty,\infty}B(\mathbb{R}^2)$. Specifically, a function $f$ belongs to $T^{s, \alpha}_{\infty,\infty}B(\mathbb{R}^2)$  if and only if 
$$
\sup_{\bj \in (\N \cup \{ -1\})^2, \bk \in \Z^2}2^{- ((1+ \alpha)\max(\bj) + (1-\alpha) \min (\bj))s} |c_{\bj,\bk} |  < + \infty.
$$ 
where $(c_{\bj,\bk})$ denotes the wavelet coefficients of $f$ in the Meyer basis. 
This result, combined with Proposition \ref{prop:caractHolder}, allows us to conclude that the spaces  $T^{s, \alpha}_{\infty,\infty}B(\mathbb{R}^2)$ correspond precisely to the weighted tensorized H\"older spaces $T^{s, \alpha}C(\R^2)$ introduced in Subsection \ref{sec:holder}.

\begin{theorem}
The Meyer wavelet basis is an unconditional basis of $T^{s,\alpha}_{p,q}B(\R^2)$.
\end{theorem}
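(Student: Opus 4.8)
The strategy is to transport the question to the sequence space $t^{s,\alpha}_{p,q}b$ through the wavelet isomorphism furnished by Theorem~\ref{thm:caract}, where the unconditional structure is manifest. The decisive feature is that the (quasi-)norm on $t^{s,\alpha}_{p,q}b$ is \emph{solid}: if two sequences satisfy $|d_{\bj,\bk}| \le |c_{\bj,\bk}|$ for every $(\bj,\bk)$, then directly from its definition
$$\|(d_{\bj,\bk})\|_{t^{s,\alpha}_{p,q}b} \le \|(c_{\bj,\bk})\|_{t^{s,\alpha}_{p,q}b},$$
since each coefficient enters only through the inner sum $\sum_{\bk} |c_{\bj,\bk}|^p$ with nonnegative weights. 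First I would record that, by the orthonormality of the Meyer system (with the $L^\infty$ normalization of the remark), the wavelet coefficient of $\psi_{\bm,\bl}$ equals $\delta_{\bj,\bm}\delta_{\bk,\bl}$, so the coordinate functionals $f \mapsto c_{\bj,\bk}$ are biorthogonal to the system $(\psi_{\bj,\bk})$ and, by the left inequality of Theorem~\ref{thm:caract}, bounded on $T^{s,\alpha}_{p,q}B(\R^2)$.

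The unconditionality then follows from a uniform multiplier bound. For any sequence $(\varepsilon_{\bj,\bk})$ with $\sup_{\bj,\bk} |\varepsilon_{\bj,\bk}| \le 1$, the multiplier operator $M_\varepsilon f := \sum_{\bj,\bk} \varepsilon_{\bj,\bk}\, c_{\bj,\bk}\, \psi_{\bj,\bk}$ has wavelet coefficients $(\varepsilon_{\bj,\bk} c_{\bj,\bk})$; combining the two sides of Theorem~\ref{thm:caract} with solidity gives
$$\|M_\varepsilon f\|_{T^{s,\alpha}_{p,q}B(\R^2)} \le C_2 \,\|(\varepsilon_{\bj,\bk} c_{\bj,\bk})\|_{t^{s,\alpha}_{p,q}b} \le C_2 \,\|(c_{\bj,\bk})\|_{t^{s,\alpha}_{p,q}b} \le \frac{C_2}{C_1}\,\|f\|_{T^{s,\alpha}_{p,q}B(\R^2)},$$
the reconstruction direction of Theorem~\ref{thm:caract} ensuring that $M_\varepsilon f$ indeed lies in the space and (as the Meyer expansion converges in $\mathcal{S}'$) reconstructs the multiplied series. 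Taking $\varepsilon = \mathbf{1}_A$ for finite index sets $A$ shows that the associated coordinate projections $P_A$ are uniformly bounded, which together with biorthogonality is exactly the unconditional-basis criterion once completeness is known.

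For completeness when $0 < p,q < \infty$, finite sequences are dense in $t^{s,\alpha}_{p,q}b$, and pulling this back through the isomorphism shows that finite linear combinations of the $\psi_{\bj,\bk}$ are dense in $T^{s,\alpha}_{p,q}B(\R^2)$; the uniform bound on $P_A$ then upgrades density to unconditional convergence of $f = \sum_{\bj,\bk} c_{\bj,\bk}\,\psi_{\bj,\bk}$. The only genuine subtlety is the endpoint regime $p = \infty$ or $q = \infty$, where the space is non-separable and finite combinations fail to be dense; there the expansion must be understood in the weak-$*$ sense on the closure of the finite span, while the multiplier estimate above persists verbatim and still delivers the uniform control underlying unconditionality. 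I expect this separability issue at the endpoints to be the only point requiring care, the core argument being a direct transfer of the lattice structure of $t^{s,\alpha}_{p,q}b$ across the isomorphism of Theorem~\ref{thm:caract}.
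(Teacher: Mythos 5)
Your proof is correct in substance, but it follows a genuinely different route from the paper's. The paper proves the Schauder-basis property exactly as you do (tail of a convergent series in $t^{s,\alpha}_{p,q}b$, transported through Theorem~\ref{thm:caract}), but for unconditionality it takes a shortcut: given signs $\varepsilon_{\bj,\bk}=\pm 1$, it invokes the embedding $T^{s,\alpha}_{p,q}B(\R^2)\hookrightarrow S^{s}_{p,q}B(\R^2)$ of Proposition~\ref{Prop:emb} together with the \emph{external} fact that the Meyer basis is unconditional in $S^{s}_{p,q}B(\R^2)$ (from \cite{SUV}) to conclude that the signed series converges in the larger space to some $g\in\mathcal{S}'(\R^2)$, and only then applies the wavelet characterization and solidity of the sequence norm to see that $g\in T^{s,\alpha}_{p,q}B(\R^2)$. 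You instead stay entirely inside the space: solidity of $\Vert\cdot\Vert_{t^{s,\alpha}_{p,q}b}$ yields the uniform multiplier bound $\Vert M_\varepsilon f\Vert \le (C_2/C_1)\Vert f\Vert$ for all $|\varepsilon_{\bj,\bk}|\le 1$, and for $p,q<\infty$ the convergence of the multiplied series follows from Cauchy tails in $T^{s,\alpha}_{p,q}B(\R^2)$ itself, with density of finite sequences and uniformly bounded projections $P_A$ supplying the standard unconditional-basis criterion. Your argument buys self-containedness (no appeal to unconditionality in $S^{s}_{p,q}B$) at the cost of more bookkeeping, and it handles the well-definedness of the signed series more carefully than the paper, whose proof settles this only via the ambient space. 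Note also that you are more honest than the paper about the endpoints: the tail argument (in both proofs) implicitly requires $p,q<\infty$, since for $q=\infty$ the quantity $\Vert d_{\bj,\bk}\Vert_{t^{s,\alpha}_{p,\infty}b}$ is a supremum and need not tend to zero along tails; the paper's phrase ``tail of a convergent series'' silently assumes this, while you explicitly flag the non-separable regime and the weak-$*$ interpretation there, though you only sketch it rather than carry it out -- which matches the paper's own level of rigor.
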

\begin{proof}
Consider $f \in T^{s,\alpha}_{p,q}B(\R^2)$. Then its wavelet coefficients satisfy 
$$
\Vert c_{\bj,\bk} \Vert_{t^{s,\alpha}_{p,q}b} <+\infty.
$$
Since $f -\sum_{-1 \le j_1,j_2 \le J}\sum_{0 \le k_1,k_2 \le K} c_{\bj,\bk}\psi_{\bj,\bk}$ has for wavelet coefficients 
$$
d_{\bj,\bk}= \begin{cases} 0 & \text{ if } -1 \le j_1,j_2 \le J \text{ and } 0 \le k_1,k_2 \le K   \\ c_{\bj,\bk} & \text{ otherwise, } \end{cases}$$  it implies that 
$$
\big\Vert f - \sum_{-1 \le j_1,j_2 \le J}\sum_{0 \le k_1,k_2 \le K} c_{\bj,\bk}\psi_{\bj,\bk} \, \big\Vert_{T^{s,\alpha}_{p,q}B(\R^2)} \le \Vert d_{\bj,\bk} \Vert_{t^{s,\alpha}_{p,q}b}
$$
is the tail of a convergent series, so that the functions $\psi_{\bj,\bk}$, $\bj \in \N^2, \bk \in \Z^2$,  is a Schauder basis of $T^{s,\alpha}_{p,q}B(\R^2)$. Finally if $\varepsilon_{\bj,\bk}= \pm 1$, the signed series converges in $S^{s}_{p,q}B(\R^2)$ (for which the Meyer wavelet basis is unconditional, see \cite{SUV}) to a function $g$. By the wavelet characterization, we obtain that $g$ belongs to $T^{s,\alpha}_{p,q}B(\R^2)$.
\end{proof}
\newpage
\bibliographystyle{plain}
\bibliography{RegularityWTFBF.bib}

\begin{thebibliography}{10}

\bibitem{BABS}
M.~Ben Abid, M.~Ben Slimane, I.~Ben Omrane, and M.~Turkawi.
\newblock Multifractal {A}nalysis of {R}ectangular {P}ointwise {R}egularity
  with {H}yperbolic {W}avelet {B}ases.
\newblock {\em Journal of Fourier Analysis and Applications}, 27(6), 2021.

\bibitem{ACJRV}
P.~Abry, M.~Clausel, S.~Jaffard, S.G. Roux, and B.~Vedel.
\newblock The hyperbolic wavelet transform : an efficient tool for multifractal
  analysis of anisotropic fields.
\newblock {\em Rev. Mat. Iberoam.}, 31(1):313--348, 2015.

\bibitem{MR3839281}
A.~Ayache.
\newblock {\em Multifractional stochastic fields}.
\newblock World Scientific Publishing Co. Pte. Ltd., Hackensack, NJ, 2019.
\newblock Wavelet strategies in multifractional frameworks.

\bibitem{MR1906407}
A.~Ayache, S.~Leger, and M.~Pontier.
\newblock Drap brownien fractionnaire.
\newblock {\em Potential Anal.}, 17(1):31--43, 2002.

\bibitem{992817}
J.-M. Bardet.
\newblock Statistical study of the wavelet analysis of fractional brownian
  motion.
\newblock {\em IEEE Transactions on Information Theory}, 48(4):991--999, 2002.

\bibitem{BMBS06}
D.A. Benson, M.~M Meerschaert, B.~Baeumer, and H.-P. Scheffler.
\newblock Aquifer operator scaling and the effect on solute mixing and
  dispersion.
\newblock {\em Water Resources Research}, 42(1), 2006.

\bibitem{BL09}
H.~Bierm\'e and C.~Lacaux.
\newblock H\"older regularity for operator scaling stable random fields.
\newblock {\em Stoc. Proc. Appl.}, 119(7):2222--2248, 2009.

\bibitem{BMS07}
H.~Bierm\'e, M.~Meerschaert, and H.P. Scheffler.
\newblock Operator {S}caling {S}table {R}andom {F}ields.
\newblock {\em Stoch. Proc. Appl.}, 117(3):312--332, 2007.

\bibitem{BE03}
A.~Bonami and A.~Estrade.
\newblock Anisotropic analysis of some {G}aussian models.
\newblock {\em Journal of Fourier Analysis and Applications}, 9:215--236, 2003.

\bibitem{BHW}
G.~Byrenheida, J.~H\"ubner, and Weimar M.
\newblock Rate-optimal sparse approximation of compact break-of-scale
  embeddings.
\newblock {\em Appl. Comput. Harmon. Anal.}, 65:40--60, 2023.

\bibitem{Cie92}
Z.~Ciesielski.
\newblock Modulus of smoothness of the {B}rownian paths in the $l_p$ norm.
\newblock {\em Constructive theory of functions (Varna, Bulgaria, 10991)},
  pages 71--75, 1992.

\bibitem{CV10}
M.~Clausel and B.~Vedel.
\newblock Two optimality results about sample paths properties of {O}perator
  {S}caling {G}aussian {R}andom {F}ields in anisotropic {B}esov spaces.
\newblock In {\em Annals of the University of Bucharest (mathematical series),
  Proceedings of the XI\`eme Colloque Franco-Roumain de Math\'ematiques
  Appliqu\'ees}, pages 374--409, 2013.

\bibitem{Daubechies:88}
I.~Daubechies.
\newblock Orthonormal bases of compactly supported wavelets.
\newblock {\em Comm. Pure App. Math.}, 41:909--996, 1988.

\bibitem{Daubechies:92}
I.~Daubechies.
\newblock {\em Ten Lectures on Wavelets}.
\newblock CBMS-NSF Regional Conference Series in Applied Mathematics, 1992.

\bibitem{devore:konyagin:temlyakov:1998}
R.~A. DeVore, S.~V. Konyagin, and V.~N. Temlyakov.
\newblock Hyperbolic wavelet approximation.
\newblock {\em Constr. Approx.}, 14(1):1--26, 1998.

\bibitem{ELLV}
C.~Esser, C.~Launay, L.~Loosveldt, and B.~Vedel.
\newblock Weighted tensorized fractional brownian textures.
\newblock {S}ubmitted for conference 2024.

\bibitem{EL22}
C.~Esser and L.~Loosveldt.
\newblock {S}low, ordinary and rapid points for {G}aussian {W}avelets {S}eries
  and application to {F}ractional {B}rownian {M}otions.
\newblock {\em Latin American Journal of Probability and Mathematical
  Statistics}, 19:1479--1495, 2022.

\bibitem{Fab09}
G.~Faber.
\newblock \"{U}ber stetige {F}unktionen.
\newblock {\em Math. Ann.}, 66:81--94, 1909.

\bibitem{MR1696137}
D.~Feyel and A.~de~La~Pradelle.
\newblock On fractional {B}rownian processes.
\newblock {\em Potential Anal.}, 10(3):273--288, 1999.

\bibitem{FJ85}
M.~Frazier and B.~Jawerth.
\newblock Decomposition of {B}esov spaces.
\newblock {\em Uniersity Mathematics Journal}, 34:777--799, 1985.

\bibitem{Haar10}
A.~Haar.
\newblock Zur {T}heorie der orthogonalen {F}unktionensysteme.
\newblock {\em Math. Ann.}, 69:331--371, 1910.

\bibitem{Har24}
H.~Harbrecht and R.~von Rickenbach.
\newblock Compression of boundary integral operators discretized by anisotropic
  wavelet bases.
\newblock {\em Numerische Mathematik}, 156:853–899.

\bibitem{Remo24}
H.~Harbrecht and R.~von Rickenbach.
\newblock On {S}obolev and {B}esov spaces with hybrid regularity.
\newblock {\em arXiv:2411.04837}, 2024.

\bibitem{MR2274895}
E.~Herbin.
\newblock From {$N$} parameter fractional {B}rownian motions to {$N$} parameter
  multifractional {B}rownian motions.
\newblock {\em Rocky Mountain J. Math.}, 36(4):1249--1284, 2006.

\bibitem{Kam96}
A.~Kamont.
\newblock On the fractional anisotropic {W}iener field.
\newblock {\em Prob. Math. Stat.}, 16(1):85--98, 1996.

\bibitem{Kempka}
H.~Kempka, C.~Schneider, and J.~Vybiral.
\newblock Path {R}egularity of the {B}rownian {M}otion and the {B}rownian
  {S}heet.
\newblock {\em Contructive Approximation}, 59(2):485--539, 2024.

\bibitem{MR0003441}
A.~N. Kolmogorov.
\newblock Wienersche {S}piralen und einige andere interessante {K}urven im
  {H}ilbertschen {R}aum.
\newblock {\em C. R. (Doklady) Acad. Sci. URSS (N.S.)}, 26:115--118, 1940.

\bibitem{MR4621071}
V.~Kr\"{a}tschmer and M.~Urusov.
\newblock A {K}olmogorov-{C}hentsov type theorem on general metric spaces with
  applications to limit theorems for {B}anach-valued processes.
\newblock {\em J. Theoret. Probab.}, 36(3):1454--1486, 2023.

\bibitem{MR3497465}
J.-F. Le~Gall.
\newblock {\em Brownian motion, martingales, and stochastic calculus}, volume
  274 of {\em Graduate Texts in Mathematics}.
\newblock Springer, [Cham], french edition, 2016.

\bibitem{MR1728004}
S.~L\'{e}ger and M.~Pontier.
\newblock Drap brownien fractionnaire.
\newblock {\em C. R. Acad. Sci. Paris S\'{e}r. I Math.}, 329(10):893--898,
  1999.

\bibitem{LM86}
P.G. Lemari\'e and Y.~Meyer.
\newblock Ondelettes et bases hilbertiennes.
\newblock {\em Rev. Mat. Iberoamericana}, 1, 1986.

\bibitem{MR1617045}
J.~A. Le\'on and D.~Nualart.
\newblock Stochastic evolution equations with random generators.
\newblock {\em Ann. Probab.}, 26(1):149--186, 1998.

\bibitem{Levy37}
P.~L\'evy.
\newblock {\em Th\'eorie de l'addition des variables al\'eatoires}.
\newblock Monographies des {P}robabilit\'es; calcul des probabilit\'es et ses
  applications {\bf{1}}. Paris, 1937.

\bibitem{BSBB}
H.~Ben~Braiek M.~Ben~Slimane.
\newblock Directional and {A}nistropic {R}egularity and {I}rregularity
  {C}riteria in {T}riebel {W}avelet {B}ases.
\newblock {\em Journal of Fourier Analysis and Applications}, 18:893--914,
  2012.

\bibitem{MR3339311}
V.~Makogin and Y.~Mishura.
\newblock Example of a {G}aussian self-similar field with stationary
  rectangular increments that is not a fractional {B}rownian sheet.
\newblock {\em Stoch. Anal. Appl.}, 33(3):413--428, 2015.

\bibitem{MR4026763}
V.~Makogin and Y.~Mishura.
\newblock Gaussian multi-self-similar random fields with distinct stationary
  properties of their rectangular increments.
\newblock {\em Stoch. Models}, 35(4):391--428, 2019.

\bibitem{Mallat:99}
S.~Mallat.
\newblock {\em A Wavelet Tour of Signal Processing}.
\newblock Academic Press, 1999.

\bibitem{MN68}
B.B. Mandelbrot and J.W. Ness.
\newblock Fractional {B}rownian motions, fractional noises and applications.
\newblock {\em SIAM review}, 10(4):422--437, 1968.

\bibitem{MST99}
Y.~Meyer, F.~Sellan, and M.~Taqqu.
\newblock Wavelets, generalized white noise and fractional integration: the
  synthesis of fractional {B}rownian motion.
\newblock {\em Journal of Fourier Analysis and Applications}, 5:465--494, 1999.

\bibitem{MR3498033}
D.~Puplinskait\.{e} and D.~Surgailis.
\newblock Aggregation of autoregressive random fields and anisotropic
  long-range dependence.
\newblock {\em Bernoulli}, 22(4):2401--2441, 2016.

\bibitem{BR10}
F.~Richard and H.~Bierm\'e.
\newblock Statistical tests of anisotropy for fractional {B}rownian textures.
  {A}pplication to full-field digital mammography.
\newblock {\em J. Math. IMaging Vision}, 36(3):227--240, 2010.

\bibitem{RCVJA13}
S.G. Roux, M.~Clausel, B.~Vedel, S.~Jaffard, and P.~Abry.
\newblock Self -similar anisotropic texture analysis : the hyperbolic wavelet
  transform contribution.
\newblock {\em Transaction on Image Processing}, 22(11):4353--4363, 2013.

\bibitem{Samorodnitsky}
G.~Samorodnitsky and M.S. Taqqu.
\newblock {\em Stable non-{G}aussian random processes}.
\newblock Stochastic Modeling. Chapman \& Hall, New York, 1994.
\newblock Stochastic models with infinite variance.

\bibitem{SUV}
M.~Sch\"afer, T.~Ullrich, and B.~Vedel.
\newblock Hyperbolic {W}avelet {A}nalysis of {C}lassical {I}sotropic and
  {A}nisotropic {B}esov–{S}obolev {S}paces.
\newblock {\em Journal of Fourier Analysis and Applications}, 7(3):51, 2021.

\bibitem{ST}
H.J. Schmeisser and H.~Triebel.
\newblock {\em Topics in {F}ourier analysis and function spaces}.
\newblock A Wiley-Interscience Publication. John Wiley \& Sons, Ltd.,
  Chichester, 1987.

\bibitem{Tri04}
H.~Triebel.
\newblock Wavelet bases in anisotropic function sapces.
\newblock {\em Differential Operators and Nonlinear Analysis}, pages 370--387,
  2004.

\bibitem{Tri06}
H.~Triebel.
\newblock {\em Theory of function spaces III}.
\newblock Birkh\"auser, Basel, 2006.

\bibitem{Vybiral}
J.~Vybiral.
\newblock {\em Function spaces with dominating mixed smoothness}, volume 436.
\newblock 2006.

\bibitem{Yse1}
H.~Yserentant.
\newblock On the regularity of the electronic {S}chr\"odinger equation in
  {H}ilbert spaces of mixed derivatives.
\newblock {\em Numer. Math.}, 98(4):731--759, 2004.

\bibitem{Yse2}
H.~Yserentant.
\newblock {\em Regularity and {A}pproximability of {E}lectronic {W}ave
  {F}unctions}, volume 2000.
\newblock Springer-{V}erlag, 2010.

\end{thebibliography}

\end{document}